\newcommand{\N}{\ensuremath{\mathbb{N}}}
\newcommand{\R}{\ensuremath{\mathbb{R}}}
\newcommand{\tT}{\mathrm{T}}
\newcommand{\HH}{\mathcal{H}}
\newcommand{\HT}{{\mathcal{H}_{T}}}
\newcommand{\KK}{\mathcal{K}}
\newcommand*\pFq[6][8]{
  \begingroup % only local assignments
  \pFqmuskip=#1mu\relax
  % make the comma math active
 % \mathcode`\,=\string"8000
  % and define it to be \pFqcomma
  \begingroup\lccode`\~=`\,
  \lowercase{\endgroup\let~}\pFqcomma
  % typeset the formula
  {}_{#2}F_{#3}{\left(\genfrac..{0pt}{}{#4}{#5};#6\right)}%
  \endgroup
}
\newcommand*\pRegFq[6][8]{
  \begingroup % only local assignments
  \pFqmuskip=#1mu\relax
  % make the comma math active
 % \mathcode`\,=\string"8000
  % and define it to be \pFqcomma
  \begingroup\lccode`\~=`\,
  \lowercase{\endgroup\let~}\pFqcomma
  % typeset the formula
  {}_{#2}\tilde{F}_{#3}{\left(\genfrac..{0pt}{}{#4}{#5};#6\right)}%
  \endgroup
}
\newcommand{\pFqcomma}{\mskip\pFqmuskip}
\DeclareMathOperator*{\argmin}{argmin}
\DeclareMathOperator*{\argmax}{argmax}
\def\prox{\mathrm{prox}}
\def\Prox{\mathrm{Prox}}
\def\St{\mathrm{St}}
\newtheorem{thm}{Theorem}[section]
\newtheorem{lemma}[thm]{Lemma}
\newtheorem{remark}[thm]{Remark}
\newtheorem{example}[thm]{Example}
\newtheorem{corollary}[thm]{Corollary}
\newtheorem{proposition}[thm]{Proposition}
\begin{document}
\title{Parseval Proximal Neural Networks}
%On the Interplay between Proximity and Linear Operators in Hilbert Spaces

\author{
Marzieh Hasannasab\footnotemark[1]
\and
Johannes Hertrich\footnotemark[2]
\and
Sebastian Neumayer\footnotemark[2]
	\and
	Gerlind Plonka\footnotemark[3]
	\and
	Simon Setzer\footnotemark[4]
	\and
	Gabriele Steidl\footnotemark[1]
	%\and
	%Jakob Geppert\footnotemark[2]
}

\maketitle
\footnotetext[1]{Institute of Mathematics,
	TU Berlin,
	Stra{\ss}e des 17.  Juni 136, 
	D-10623 Berlin, Germany,
	\{name\}@math.tu-berlin.de.}

\footnotetext[2]{Department of Mathematics,
	Technische Universit\"at Kaiserslautern,
	Paul-Ehrlich-Str.~31, D-67663 Kaiserslautern, Germany,
	\{name\}@mathematik.uni-kl.de.}
	
	\footnotetext[3]{Institute for Numerical and Applied Mathematics,
	G\"ottingen University, Lotzestr.\ 16-18, 37083 G\"ottingen, Germany,
	plonka@math.uni-goettingen.de.
	}
	
	\footnotetext[4]{simon.setzer@gmail.com.}

\begin{abstract}
The aim of this paper is twofold. First, we show that a certain concatenation of a proximity operator with an affine operator
is again a proximity operator on a suitable Hilbert space. Second, we use our findings to establish so-called
proximal neural networks (PNNs) and stable tight frame proximal neural networks.

Let $\HH$ and $\KK$ be real Hilbert spaces, $b \in \KK$ and 
$T \in \mathcal{B} (\HH,\KK)$ a linear operator with closed range and Moore-Penrose inverse $T^\dagger$.
Based on the well-known characterization of proximity operators by Moreau, 
we prove that for any proximity operator $\Prox \colon \KK \to \KK$ the operator 
$T^\dagger \, \Prox ( T \cdot + b)$ is a proximity operator on $\HH$ equipped with a suitable norm.
In particular, it follows for the frequently applied soft shrinkage operator 
$\Prox = S_{\lambda}\colon \ell_2 \rightarrow \ell_2$ and any frame analysis operator $T\colon \HH \to \ell_2$ that the frame shrinkage operator $T^\dagger\, S_\lambda\, T$ is a proximity operator on a suitable Hilbert space.

The concatenation of proximity operators on $\mathbb R^d$ equipped with different norms establishes 
a  PNN.
If the network arises from tight frame analysis or synthesis operators, then it forms an averaged operator.
In particular, it has Lipschitz constant 1 and belongs to the class of
so-called Lip\-schitz networks, which were
recently applied to defend against adversarial attacks.
Moreover, due to its aver\-aging property, PNNs  can be used within so-called Plug-and-Play algorithms
with convergence guarantee.
In case of Parseval frames, we call the networks Parseval proximal neural networks (PPNNs). 
Then, the involved linear operators
are in a Stiefel manifold and corresponding minimization methods can be applied for training of such networks.
Finally, some proof-of-the concept examples demonstrate the performance of PPNNs.
\end{abstract}

\section{Introduction}

Wavelet and frame shrinkage operators became very popular in recent years.
A certain starting point was the iterative shrinkage-thresholding algorithm (ISTA) in \cite{DDM04}, which was interpreted as a special case of
the forward-backward algorithm  in \cite{CW05}. 
For relations with other algorithms see also~\cite{BSS2017,Se09}.
Let  $T \in  \mathbb R^{n \times d}$, $n\ge d$, have full column rank.
Then, the problem
\begin{equation} \label{eq:problem}
\argmin_{y \in \mathbb R^d} \bigl\{ \tfrac12 \|x -y \|_2^2 + \lambda \|Ty\|_{1} \bigr\}, \quad \lambda >0,
\end{equation}
is known as the analysis point of view.
%, while
%the ``opposite'' case  $T \in \mathbb R^{n \times d}$ with $n \le d$ can be seen as synthesis approach, see \cite{EMR07}.
For orthogonal $T \in {\mathbb R}^{d \times d}$, the solution of \eqref{eq:problem} is given by the frame soft shrinkage operator 
$T^\dagger \, S_\lambda \, T  = T^* \, S_\lambda \, T$, see Example \ref{ex:1}.
If $T \in \mathbb R^{n \times d}$ with $n \le d$ and $T T^* = I_n$, 
the solution of problem \eqref{eq:problem} is given by
$I_d -  T^* T + T^* S_\lambda T$,
see \cite[Theorem 6.15]{Beck17}.
For arbitrary $T \in {\mathbb R}^{n \times d}$, $n \ge d$, 
there are no analytic expressions for the solution of \eqref{eq:problem} in the literature.

The question whether the frame shrinkage operator  can itself  be seen as a proximity operator has been recently studied in \cite{GP2019}.
They showed that the set-valued operator $(T^\dagger S_\lambda T)^{-1} - I_d$
is maximally cyclically monotone, which implies that it is a proximity operator with respect to some norm in
$\mathbb R^d$.
In this paper, we prove that for any operator $T\in \mathcal{B} (\HH,\KK)$ with closed range, $b \in \KK$ and
any proximity operator $\Prox\colon \KK \to \KK$ 
the new operator $T^\dagger \, \Prox \, ( T \cdot+ b) \colon \HH \to \HH$ 
is also a proximity operator on the linear space $\HH$, but equipped with another inner product.
The above mentioned finite dimensional setting is included as a special case.
In contrast to \cite{GP2019}, we directly approach the problem using a classical result of Moreau \cite{Moreau65}. 
Moreover, we provide the function for the definition of the proximity operator.
Here, we like to mention that this function can be also deduced from Proposition 3.9 in \cite{Combettes2018}.
However, since this deduction appears to be more space consuming than the direct proof of our Theorem \ref{thm:ExpForm},
we prefer to give a direct approach.
Note that different norms in the definition of the proximity  operator 
were successfully used in variable metric algorithms, see \cite{CPR2013}.

Recently, it was shown that many activation functions appearing in neural networks are indeed proximity functions \cite{CP2018}.
Based on this observations and our previous findings, 
we consider neural networks that are concatenations of proximity operators and call them proximal neural networks (PNNs).
PNNs can be considered within the framework of the variational networks proposed in \cite{KKHP2017}.
Due to stability reasons, PNNs related to linear operators from Stiefel manifolds are of special interest. 
They form so-called averaged operators and are consequently nonexpansive.
%We refer to these networks as Parseval (frame) proximal neural networks (PPNNs).
Orthogonal matrices have already shown advantages in training recurrent neural networks (RNNs)
\cite{ASB2016,bansal2018can, jing2017tunable, lezcano2019cheap, Vorontsov2017,Wisdom2016}. 
Using orthogonal matrices, vanishing or exploding gradients in training RNNs can be avoided~\cite{Dorobantu2016}.
The more general setting of learning rectangular matrices from a Stiefel manifold was proposed, e.g., in \cite{HF2016}, 
but with a different focus than in this paper.
The most relevant paper with respect to our setting is \cite{huang2018orthogonal}, where the authors considered the so-called
optimization over multiple dependent Stiefel manifolds (OMDSM).
We will see that the NNs in \cite{huang2018orthogonal} are special cases of our PNNs so that our analysis ensures that they are
averaged operators.
 
Using matrices from Stiefel manifolds results in $1$-Lipschitz neural networks.
Consequently, our approach is naturally related to other methods for controlling the Lipschitz constant of neural networks, which provably increases robustness against adversarial attacks \cite{TSS2018}.
In \cite{GFPC18}, the constant is controlled by projecting back all weight matrices 
in the network that violate a pre-defined threshold on the $\Vert \cdot\Vert_p$ norm, $p \in [1,\infty]$, of the weight matrices.
The authors in \cite{SGL2019} characterize the singular values of the linear map associated
with convolutional layers and use this 
for projecting a convolutional
layer onto an operator-norm ball.
Another closely related approach is spectral normalization as proposed in \cite{MKKY2018}, 
where the spectral norm of every weight matrix is enforced to be one.
Compared to our approach, this only restricts the largest singular value of the linear operators arising in the neural network.
Limitations of the expressiveness of networks with restricted Lipschitz constants in every layer were discussed in \cite{ALG19,HCC2019}.
Note that our approach does not restrict the Lipschitz constants in every individual layer.
Further, none of the above approaches is able to impose more structure on the network such as being an averaged operator.

Our results may be of interest in so-called Plug-and-Play algorithms \cite{CWE2016,SVW2016,TBF2018}.
In these algorithms a well-behaved operator, e.g., a proximity operator, is replaced by an efficient denoiser such as a neural network.
However, training a denoising framework without structure can lead to a divergent algorithm,
see \cite{SKM2019}.
In contrast, it was shown in \cite{Sun2018AnOP} 
that a particular version of a Plug-and-Play algorithm converges if the network is averaged.

Our paper is organized as follows: 
We begin with preliminaries on convex analysis in Hilbert spaces in Section~\ref{sec:prelim}.
In Section~\ref{sec:interplay}, we prove our general results on the interplay between proximity and certain affine operators.
As a special case we emphasize that the frame soft shrinkage operator is itself a proximity operator in Section~\ref{sec:frame}.
In Section~\ref{sec:PNN}, we use our findings to set up neural networks as a concatenation of proximity operators on $\mathbb R^d$
equipped with different norms related to linear operators.
If these operators are related to tight frames, our proposed network is actually an averaged operator.
In case of Parseval frames,
the involved matrices are in Stiefel manifolds and we end up with PPNNs.
Section~\ref{sec:PPNN} deals with the training of PPNNs via stochastic gradient descent on Stiefel manifolds.
In Section~\ref{sec:numerics}, we provide first numerical examples.
Finally, Section~\ref{sec:conclusions} contains conclusions and addresses further research questions.

%-------------------------------------------------------------------------------------
\section{Preliminaries} \label{sec:prelim}
%-------------------------------------------------------------------------------------
Let $\HH$ be a real Hilbert space with inner product $\langle \cdot,\cdot \rangle$ and norm $\| \cdot \|$. 
By  $\Gamma_0(\HH)$ we denote the set of proper, convex, lower semi-continuous functions on $\HH$ mapping into $(-\infty,  \infty]$. 
For $f \in \Gamma_0(\HH)$ and $\lambda > 0$, the \emph{proximity operator} $\prox_{\lambda f}\colon \HH \rightarrow \HH$ 
and its \emph{Moreau envelope}
$M_{\lambda f}\colon \HH \rightarrow \R$ 
are defined by
\begin{align}\label{prox_usual}
\prox_{\lambda f} (x) &\coloneqq \argmin_{y \in \HH} \bigl\{ \tfrac12 \|x-y\|^2 + \lambda f(y) \bigr\}, \\
M_{\lambda f} (x) &\coloneqq \min_{y \in \HH} \bigl\{ \tfrac12 \|x-y\|^2 + \lambda f(y) \bigr\}.
\end{align}
Clearly, the proximity operator and its Moreau envelope depend on the underlying space~$\HH$, in particular on the chosen inner product.
Recall that an operator $A\colon \HH \to \HH$ is called \emph{firmly nonexpansive} if for all $x,y \in \HH$ the following relation
is fulfilled
\begin{equation} \label{star_ap}
\|Ax -Ay\|^2 \le \langle x-y,Ax-Ay \rangle .
\end{equation}
Obviously, firmly nonexpansive operators are nonexpansive.

For a Fr\'echet differentiable function $\Phi\colon \HH \to \R$, 
the gradient $\nabla \Phi(x)$ at $x \in \HH$ is defined as the vector satisfying for all $h \in \HH$,
\[\langle \nabla \Phi(x), h \rangle = D\Phi(x) h,\]
where $D\Phi\colon \HH\to \mathcal{B} (\HH,\R)$ denotes the Fr\'echet derivative of $\Phi$,
i.e., for all $x,h \in \HH$,
\begin{equation} \label{frechet}
\Phi(x+h) - \Phi(x) = D\Phi(x) h + o(\|h\|).
\end{equation}
Note that the gradient crucially depends on the chosen inner product in $\HH$.
The following results can be found, e.g., in \cite[Props.~12.27, 12.29]{BC11}. %,\cite[Thms.~6.42 and 6.60]{Beck17}, \cite{Ro97}.

\begin{thm} \label{lem:1}
Let $f \in \Gamma_0(\HH)$. Then, the following relations hold true:
\\
i) The operator $\prox_{\lambda f} \colon \HH \to \HH$ is firmly  nonexpansive.
\\
ii) The function $M_{\lambda f}$ is  (Fr\'echet) differentiable with Lipschitz-continuous gradient given by
\[\nabla  M_{\lambda f}(x) = x - \prox_{\lambda f}(x).\]
\end{thm}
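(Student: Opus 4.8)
The plan is to derive part~(i) from the subdifferential characterization of the proximity operator, and then to bootstrap part~(ii) from part~(i) by a two-sided estimate of the Moreau envelope.

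First I would record that for $f \in \Gamma_0(\HH)$ and $\lambda>0$ the objective $y \mapsto \tfrac12\norm{x-y}^2 + \lambda f(y)$ is proper, lower semi-continuous, coercive and strongly convex, so it has a unique minimizer; hence $\prox_{\lambda f}$ is well defined and single-valued. Writing $p \coloneqq \prox_{\lambda f}(x)$, Fermat's rule together with the subdifferential sum rule gives $0 \in (p-x) + \lambda\,\partial f(p)$, i.e.\ the characterization
\[ p = \prox_{\lambda f}(x) \iff x-p \in \lambda\,\partial f(p). \]
For part~(i), let $p = \prox_{\lambda f}(x)$ and $q = \prox_{\lambda f}(y)$, so that $\tfrac{1}{\lambda}(x-p) \in \partial f(p)$ and $\tfrac{1}{\lambda}(y-q) \in \partial f(q)$. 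Monotonicity of the subdifferential of the convex function $f$ yields $\langle (x-p)-(y-q),\,p-q\rangle \ge 0$, which rearranges to $\norm{p-q}^2 \le \langle x-y,\,p-q\rangle$. This is precisely the firm-nonexpansiveness inequality~\eqref{star_ap} for $A=\prox_{\lambda f}$. In particular, Cauchy--Schwarz gives the nonexpansiveness bound $\norm{p-q} \le \norm{x-y}$, which I will reuse below.

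For part~(ii), fix $x$, set $p = \prox_{\lambda f}(x)$ and let $h \in \HH$. Inserting the competitor $p$ into the minimization defining $M_{\lambda f}(x+h)$ yields the upper bound
\[ M_{\lambda f}(x+h) - M_{\lambda f}(x) \le \langle x-p,\,h\rangle + \tfrac12\norm{h}^2 . \]
Symmetrically, inserting $p' \coloneqq \prox_{\lambda f}(x+h)$ into the minimization defining $M_{\lambda f}(x)$ and expanding the square gives the lower bound $M_{\lambda f}(x+h) - M_{\lambda f}(x) \ge \langle x-p,\,h\rangle + \tfrac12\norm{h}^2 - \langle p'-p,\,h\rangle$. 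Here the nonexpansiveness $\norm{p'-p}\le\norm{h}$ from part~(i) controls the cross term, $\abs{\langle p'-p,\,h\rangle}\le\norm{h}^2$, so that both bounds agree to first order and
\[ \abs{\,M_{\lambda f}(x+h) - M_{\lambda f}(x) - \langle x-p,\,h\rangle\,} \le \tfrac12\norm{h}^2 = o(\norm{h}). \]
This establishes Fr\'echet differentiability with $\nabla M_{\lambda f}(x) = x - p = x - \prox_{\lambda f}(x)$. For the Lipschitz continuity of the gradient, I would set $u = x-y$ and $v = \prox_{\lambda f}(x)-\prox_{\lambda f}(y)$; firm nonexpansiveness reads $\norm{v}^2\le\langle u,\,v\rangle$, whence $\norm{u-v}^2 = \norm{u}^2 - 2\langle u,\,v\rangle + \norm{v}^2 \le \norm{u}^2-\norm{v}^2 \le \norm{u}^2$, i.e.\ $\nabla M_{\lambda f}$ is $1$-Lipschitz.

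The only delicate point is the justification of the subdifferential sum rule underlying the characterization, but this is automatic here because the quadratic summand is finite and continuous everywhere, so no constraint qualification is needed. The remaining crux is structural rather than technical: part~(ii) genuinely depends on part~(i), since the tightness of the lower-bound competitor argument rests exactly on the nonexpansiveness bound $\norm{p'-p}\le\norm{h}$.
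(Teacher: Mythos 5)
Your proof is correct: establishing part~(i) from Fermat's rule and the monotonicity of $\partial f$, and then part~(ii) by the two-sided competitor estimate on the envelope (with the cross term $\langle p'-p,h\rangle$ controlled by nonexpansiveness) is precisely the classical argument. The paper itself gives no proof but cites \cite{BC11} (Props.~12.27, 12.29), and your derivation reproduces that standard route, correctly adapted to the paper's scaling convention $M_{\lambda f}(x)=\min_{y}\bigl\{\tfrac12\|x-y\|^2+\lambda f(y)\bigr\}$, under which the gradient is $x-\prox_{\lambda f}(x)$ rather than $\lambda^{-1}\bigl(x-\prox_{\lambda f}(x)\bigr)$.
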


Clearly, ii) implies that
\begin{equation}\label{proxi}
\prox_{\lambda f} (x) = \nabla \bigl( \tfrac12 \| x \|^2 - M_{\lambda f}(x)  \bigr) = \nabla \Phi(x),
\end{equation}
where $\Phi \coloneqq \frac12 \|\cdot \|^2 - M_{\lambda f}$ is convex as $\prox_{\lambda f}$ is nonexpansive \cite[Prop.~17.10]{BC11}.
Further, it was shown by Moreau that also the following (reverse) statement holds true \cite[Cor.~10c]{Moreau65}.

\begin{thm} \label{thm:1}
 The operator $\Prox\colon  \HH \rightarrow \HH$ is a proximity operator 
 if and only if it is nonexpansive and there exists a function $\Psi \in \Gamma_0(\HH)$ with $\Prox(x) \in \partial \Psi(x)$ for any $x \in \HH$, where $\partial \Psi$ denotes the subdifferential of $\Psi$.
\end{thm}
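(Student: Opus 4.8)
The plan is to prove the two implications separately, leaning on the material already assembled in the excerpt for the forward direction and on an integration/duality argument for the converse. For ``proximity operator $\Rightarrow$ nonexpansive plus subgradient selection'', suppose $\Prox=\prox_{\lambda f}$ with $f\in\Gamma_0(\HH)$. Theorem~\ref{lem:1}~i) already yields that $\Prox$ is firmly nonexpansive, hence nonexpansive. Moreover, by \eqref{proxi} we may write $\Prox=\nabla\Phi$ with $\Phi:=\tfrac12\|\cdot\|^2-M_{\lambda f}\in\Gamma_0(\HH)$ convex. Since a convex function is differentiable exactly where its subdifferential is a singleton, $\nabla\Phi(x)$ is the unique element of $\partial\Phi(x)$, so $\Prox(x)=\nabla\Phi(x)\in\partial\Phi(x)$ for every $x$. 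Setting $\Psi:=\Phi$ completes this implication.

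For the converse, assume $\Prox$ is nonexpansive and $\Prox(x)\in\partial\Psi(x)$ for all $x\in\HH$ with $\Psi\in\Gamma_0(\HH)$. The first and, I expect, most delicate step is to upgrade the pointwise \emph{selection} property to the identity $\partial\Psi=\Prox$. Because $\Prox$ is nonexpansive it is continuous, and it is an everywhere-defined single-valued selection of the monotone operator $\partial\Psi$. Fixing $x_0$ and an arbitrary $v\in\partial\Psi(x_0)$, monotonicity of $\partial\Psi$ applied to the pair $x_0,\,x_0+th$ gives $\langle \Prox(x_0+th)-v,\,h\rangle\ge0$ for every direction $h$ and every $t>0$; letting $t\to0^+$ and using continuity of $\Prox$ yields $\langle \Prox(x_0)-v,\,h\rangle\ge0$ for all $h$, whence $v=\Prox(x_0)$. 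Thus $\partial\Psi(x)=\{\Prox(x)\}$ is a singleton for every $x$, so $\Psi$ is G\^ateaux differentiable with $\nabla\Psi=\Prox$, and this gradient is $1$-Lipschitz.

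It then remains to exhibit an explicit $f\in\Gamma_0(\HH)$ with $\prox_f=\Prox$. Since $\nabla\Psi$ is $1$-Lipschitz, the conjugate $\Psi^*$ is $1$-strongly convex, so $f:=\Psi^*-\tfrac12\|\cdot\|^2$ is convex, and as the sum of a proper lower semicontinuous function and a continuous one it lies in $\Gamma_0(\HH)$. Computing the proximal minimization and cancelling the quadratic terms leaves $\prox_f(x)=\argmax_{y}\{\langle x,y\rangle-\Psi^*(y)\}$; by Fenchel's identity together with $\Psi^{**}=\Psi$, and using strong convexity of $\Psi^*$ for uniqueness, this maximizer is exactly $\nabla\Psi(x)=\Prox(x)$. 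Hence $\Prox$ is a proximity operator. The main obstacle is the selection-to-gradient step of the previous paragraph: a priori a single-valued selection of a subdifferential need not be a gradient, and it is precisely the interplay of continuity, single-valuedness and everywhere-definedness of $\Prox$ with monotonicity of $\partial\Psi$ that forces $\partial\Psi$ to collapse onto $\nabla\Psi$; the smoothness/strong-convexity duality invoked in the final step likewise needs some care in the infinite-dimensional setting.
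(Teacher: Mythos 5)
Your proof is correct, but note that there is nothing in the paper to compare it against: the paper does not prove this theorem, it simply quotes it from Moreau \cite[Cor.~10c]{Moreau65}. What you have written is essentially a reconstruction of the classical duality argument behind Moreau's result. Your forward direction matches the discussion surrounding the statement in the paper (Theorem~\ref{lem:1} together with \eqref{proxi}), and your converse is sound: the monotonicity-plus-continuity argument forcing $\partial\Psi(x)=\{\Prox(x)\}$ is valid, and the passage $\nabla\Psi$ $1$-Lipschitz $\Leftrightarrow$ $\Psi^*-\tfrac12\|\cdot\|^2$ convex does hold in arbitrary real Hilbert spaces (it is part of the Baillon--Haddad circle of equivalences, see \cite[Thm.~18.15]{BC11}), so the ``care in the infinite-dimensional setting'' you flag is discharged by a citation rather than extra work. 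Two small points you leave implicit are worth making explicit: first, since $\partial\Psi(x)\neq\emptyset$ for every $x$, the domain of $\Psi$ is all of $\HH$, and a proper, lower semi-continuous, convex function that is finite everywhere on a Hilbert space is continuous; this continuity is exactly what licenses the step from ``$\partial\Psi(x)$ is a singleton'' to G\^ateaux differentiability \cite[Prop.~17.26]{BC11}. Second, you never actually need differentiability of $\Psi$: for the final step it suffices that the unique minimizer $y$ of $\Psi^*(\cdot)-\langle x,\cdot\rangle$ is characterized by $x\in\partial\Psi^*(y)$, equivalently $y\in\partial\Psi^{**}(x)=\partial\Psi(x)=\{\Prox(x)\}$, which uses only single-valuedness of $\partial\Psi$ and $\Psi^{**}=\Psi$ for $\Psi\in\Gamma_0(\HH)$; existence and uniqueness of the minimizer also come for free from $f=\Psi^*-\tfrac12\|\cdot\|^2\in\Gamma_0(\HH)$ and the definition of the proximity operator. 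With these two standard facts filled in, your argument is a complete and correct proof of the theorem.
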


Thanks to \eqref{proxi}, we conclude that $\Prox\colon  \HH \rightarrow \HH$ is a proximity operator 
if and only if it is nonexpansive and the gradient of a convex, differentiable function $\Phi \colon \HH \to \R$.
Recently, the characterization of Bregman proximity operators in a more general setting was discussed in \cite{GN2018}.
In the following example, we recall the Moreau envelope and the proximity operator related to the soft thresholding operator.

\begin{example} \label{ex:1}
	Let $\HH = \R$ with usual norm $|\cdot|$ and $f(x) \coloneqq |x|$.
	Then, $\prox_{\lambda f}$ is the soft shrinkage operator $S_\lambda$ defined by
	\[S_\lambda(x)\coloneqq 	\left\{
	\begin{array}{cl}
	x - \lambda& \mathrm{for} \; x > \lambda,\\
	0         & \mathrm{for} \; x \in [-\lambda,\lambda],\\
	x + \lambda& \mathrm{for} \; x < -\lambda,
	\end{array}
	\right.\]
	and the Moreau envelope is the Huber function
	\[
	m_{\lambda | \cdot|} (x)
	=
	\left\{
	\begin{array}{cl}
	\lambda x - \frac{\lambda^2}{2} & \mathrm{for} \; x > \lambda,\\[0.5ex]
	\frac{1}{2} x^2         & \mathrm{for} \; x \in [-\lambda,\lambda],\\[0.5ex]
	- \lambda x - \frac{\lambda^2}{2}& \mathrm{for} \; x < -\lambda.
	\end{array}
	\right.\]
	Hence, $\prox_{\lambda f} = \nabla \varphi$, where $\varphi(x)  = \frac{x^{2}}{2} - m_{\lambda | \cdot|}(x)$, i.e., 
	\[\varphi(x) = 
	\left\{\begin{array}{cl}
	\tfrac12(x-\lambda)^2& \mathrm{for} \; x > \lambda,\\
	0 &\mathrm{for} \; x \in [-\lambda,\lambda],\\[0.5ex]
	\tfrac12 (x + \lambda)^2& \mathrm{for} \; x < -\lambda.
	\end{array}
	\right.\]
	For $\HH = \mathbb R^d$ and $f(x) \coloneqq \|x\|_1$, we can use a componentwise approach.
	Then, $S_\lambda$ is defined componentwise, the Moreau envelope reads as
	$M_{\lambda \| \cdot \|_1} (x) = \sum_{i=1}^d m_{\lambda | \cdot|}(x_i)$ 
	and the potential of $\prox_{\lambda \| \cdot \|_1}$ is $\Phi(x) = \sum_{i=1}^d \varphi(x_i)$.
\end{example}

%----------------------------------------
\section{Interplay Between Proximity and Linear Operators} \label{sec:interplay}
%----------------------------------------
Let $\HH$ and $\KK$ be real Hilbert spaces with inner products $\langle \cdot,\cdot \rangle_\HH$ and  $\langle \cdot,\cdot \rangle_\KK$ and corresponding norms
$\| \cdot \|_\HH$ and $\| \cdot \|_\KK$, respectively.
By $\mathcal{B}(\HH,\KK)$ we denote the space of bounded, linear operators from $\HH$ to $\KK$.
The kernel and the range of 
$T\in \mathcal{B}(\HH,\KK)$ are denoted by $\mathcal N(T)$ and $\mathcal R(T)$, respectively.
In this section, we show that for any nontrivial operator $T \in \mathcal{B}(\HH,\KK)$ with closed range $\mathcal{R}(T)$, $b \in \KK$ 
and proximity operator $\Prox \colon \KK \to \KK$, 
the operator $T^\dagger \, \Prox ( T \cdot + b)\colon \HH \rightarrow \HH$ 
is itself a proximity operator on the linear space $\HH$ equipped with a suitable (equivalent) norm $\| \cdot \|_{\HT}$, 
i.e., there exits a function $f \in \Gamma_0(\HH)$ such that
\[
T^\dagger \, \Prox ( Tx + b) = \argmin_{y \in \HH} \bigl\{ \tfrac12 \|x-y\|_\HT^2 + f(y) \bigr\}.
\]

Throughout this section, 
let $T \in \mathcal{B}(\HH,\KK)$ have closed range.
Then, the same holds true for its adjoint $T^*\colon \KK \to \HH$ and the following (orthogonal) decompositions hold
\begin{equation}\label{orth}
 \KK = \mathcal{R}(T) \oplus \mathcal{N}(T^*), \qquad \HH = \mathcal{R}(T^*) \oplus \mathcal{N}(T).
\end{equation}
The Moore-Penrose inverse (generalized inverse, pseudo-inverse) $T^\dagger \in \mathcal{B}(\KK,\HH)$
is given point-wise by
$$\{T^{\dagger}y\} = \{x \in \HH: T^*\,T x = T^*y\} \cap \mathcal{R}(T^*),$$
see \cite{BC11}.
Further, it satisfies $\mathcal{R} (T^\dagger) = \mathcal{R} (T^*)$ and
\begin{equation} \label{MP1}
T^{\dagger}\, T = P_{\mathcal{R}(T^*)}, \quad T\, T^{\dagger} = P_{\mathcal{R}(T)}, 
\end{equation}
where $P_{C}$ is the orthogonal projection onto the closed, convex set $C$, see \cite[Prop.~3.28]{BC11}.
Then, it follows 
\begin{equation} \label{MP3}
T^\dagger\, T\, T^* = P_{\mathcal{R}(T^*)}\,T^* = T^* 
\quad \mathrm{and} \quad
T^\dagger\, P_{\mathcal{R}(T)} = 
T^\dagger\, T\, T^\dagger = T^\dagger.
\end{equation}
If $T$ is injective, then $T^\dagger = (T^*\,T)^{-1} T^*$
and if $T$ is surjective, we have $T^\dagger =  T^*(T\,T^*)^{-1}$.

Every $T\in \mathcal{B}(\HH,\KK)$ gives rise to an inner product in $\HH$ via
\begin{equation} \label{inner_neu}
\langle x, y \rangle_\HT = \langle Tx, Ty \rangle_\KK/\Vert T\Vert^2_{\mathcal{B}(\HH,\KK)} + \langle x, P_{\mathcal{N}(T)} y \rangle_\HH
\end{equation}
with corresponding norm
\[ \|x\|_\HT= \bigl(\| Tx\|^2_\KK/\Vert T\Vert^2_{\mathcal{B}(\HH,\KK)} + \|  P_{\mathcal{N}(T)} x\|^2_\HH \bigr)^{\frac{1}{2}}.\]
If $T$ is injective, the second summand vanishes.
In general, this norm only induces a pre-Hilbert structure.
Since $T \in \mathcal{B}(\HH,\KK)$ has closed range, the norms $\| \cdot\|_\HH$ and $\| \cdot\|_\HT$ are equivalent on $\HH$ due to
\begin{equation} \label{new_norm}
\| x\|_\HT^2 = \Vert T x \Vert_\KK^2/\Vert T\Vert^2_{\mathcal{B}(\HH,\KK)} +  \|  P_{\mathcal{N}(T)} x\|_\HH^2 \leq 2 \Vert x \Vert_\HH^2
\end{equation}
and
\[
\Vert x \Vert_\HH^2 = \Vert T^\dagger\,T x \Vert_\HH^2 
+\Vert  P_{\mathcal{N}(T)} x \Vert_\HH^2 \leq \bigl(\Vert T^\dagger \Vert^2_{\mathcal{B}(\KK,\HH)}\Vert T\Vert^2_{\mathcal{B}(\HH,\KK)} + 1\bigr) \Vert x \Vert_\HT^2
\]
for all $x \in \HH$. The norm equivalence also ensures the completeness of $\HH$ equipped with the new norm.
To emphasize that we consider the linear space $\HH$ with this norm, we write $\HT$.
For special $T \in \mathcal{B}(\HH,\KK)$, the inner product \eqref{inner_neu} coincides with the one in $\HH$.

\begin{lemma}\label{lem:tight}
Let $T \in \mathcal{B}(\HH,\KK)$ 
fulfill 
$T^* T =  \Vert T\Vert^2_{\mathcal{B}(\HH,\KK)} \mathrm{Id}_\HH$ or $T T^* =  \Vert T\Vert^2_{\mathcal{B}(\HH,\KK)} \mathrm{Id}_\KK$, 
where $\mathrm{Id}_\HH$ and $\mathrm{Id}_\KK$ denote the identity operator on $\HH$ and $\KK$, respectively.
Then, the inner product \eqref{inner_neu} coincides with the one in $\HH$ and consequently $\HH = \HH_T$.
\end{lemma}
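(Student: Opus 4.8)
The plan is to treat the two hypotheses separately and to show in each case that the first summand of the inner product \eqref{inner_neu} collapses onto an inner product involving an orthogonal projection, which then combines with the second summand to reproduce $\langle\cdot,\cdot\rangle_\HH$. Throughout I abbreviate $c \coloneqq \Vert T\Vert^2_{\mathcal{B}(\HH,\KK)}$, and I will repeatedly pass from $\langle Tx,Ty\rangle_\KK$ to $\langle T^*Tx,y\rangle_\HH$ by the defining property of the adjoint.

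First I would handle the case $T^*T = c\,\mathrm{Id}_\HH$. Since $c>0$, this forces $\mathcal{N}(T)=\{0\}$, so $P_{\mathcal{N}(T)}=0$ and the second summand in \eqref{inner_neu} vanishes. For the first summand the hypothesis gives directly
\[
\langle Tx,Ty\rangle_\KK/c = \langle T^*Tx,y\rangle_\HH/c = \langle c\,x,y\rangle_\HH/c = \langle x,y\rangle_\HH,
\]
which already settles this case.

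For the case $T T^* = c\,\mathrm{Id}_\KK$, the operator $T$ is surjective, so by the formula recalled after \eqref{MP3} one has $T^\dagger = T^*(T T^*)^{-1} = T^*/c$. Combining this with $T^\dagger T = P_{\mathcal{R}(T^*)}$ from \eqref{MP1} yields the key identity $T^*T = c\,P_{\mathcal{R}(T^*)}$. Hence the first summand becomes
\[
\langle Tx,Ty\rangle_\KK/c = \langle T^*Tx,y\rangle_\HH/c = \langle P_{\mathcal{R}(T^*)}x,y\rangle_\HH = \langle x,P_{\mathcal{R}(T^*)}y\rangle_\HH,
\]
using self-adjointness of the orthogonal projection in the last step. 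Adding the second summand and invoking the orthogonal decomposition $\HH = \mathcal{R}(T^*)\oplus\mathcal{N}(T)$ from \eqref{orth}, which gives $P_{\mathcal{R}(T^*)}+P_{\mathcal{N}(T)}=\mathrm{Id}_\HH$, I obtain
\[
\langle x,y\rangle_\HT = \langle x,P_{\mathcal{R}(T^*)}y\rangle_\HH + \langle x,P_{\mathcal{N}(T)}y\rangle_\HH = \langle x,y\rangle_\HH.
\]
In both situations $\langle\cdot,\cdot\rangle_\HT=\langle\cdot,\cdot\rangle_\HH$, whence the norms agree and $\HH=\HT$.

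The computation is essentially routine, so there is no genuine obstacle; each hypothesis is precisely what is needed to reduce the $T^*T$ term. The only point requiring a little care is the surjective case, where $T$ need \emph{not} be injective: there $P_{\mathcal{N}(T)}$ genuinely contributes, and one must recover the missing part through the identity $P_{\mathcal{R}(T^*)}+P_{\mathcal{N}(T)}=\mathrm{Id}_\HH$ rather than letting the second summand vanish as in the injective case.
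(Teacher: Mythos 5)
Your proposal is correct and takes essentially the same route as the paper: the injective case is the paper's computation verbatim, and in the surjective case your identity $T^*T = \Vert T\Vert^2_{\mathcal{B}(\HH,\KK)}\, P_{\mathcal{R}(T^*)}$ together with $P_{\mathcal{R}(T^*)} + P_{\mathcal{N}(T)} = \mathrm{Id}_\HH$ is merely a rearrangement of the paper's substitution $P_{\mathcal{N}(T)} = \mathrm{Id}_\HH - T^*T/\Vert T\Vert^2_{\mathcal{B}(\HH,\KK)}$, obtained from the same facts $T^\dagger = T^*(TT^*)^{-1}$ and $T^\dagger T = P_{\mathcal{R}(T^*)}$. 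Both arguments then collapse the two summands of \eqref{inner_neu} identically, so there is nothing substantively different to compare.
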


\begin{proof}
If $T^* T =  \Vert T\Vert^2_{\mathcal{B}(\HH,\KK)}\mathrm{Id}_\HH$, then $T$ is injective such that \eqref{inner_neu} implies
$$
\langle x, y \rangle_\HT =  \langle Tx, Ty \rangle_\KK/\Vert T\Vert^2_{\mathcal{B}(\HH,\KK)} = \langle x, T^* Ty \rangle_\HH/\Vert T\Vert^2_{\mathcal{B}(\HH,\KK)} = \langle x, y \rangle_\HH.
$$
If $T T^* =  \Vert T\Vert^2_{\mathcal{B}(\HH,\KK)}\mathrm{Id}_\KK$, 
then \eqref{MP3} implies that
$P_{\mathcal{N}(T)} = \mathrm{Id}_\HH - T^\dagger T = \mathrm{Id}_\HH - T^* T/\Vert T\Vert^2_{\mathcal{B}(\HH,\KK)}$ 
and
\begin{align*}
\langle x, y \rangle_\HT 
&=  \langle Tx, Ty \rangle_\KK/\Vert T\Vert^2_{\mathcal{B}(\HH,\KK)} +  \langle  x, P_{\mathcal{N}(T)} y \rangle_\HH\\
&= \langle x,T^*Ty \rangle_\HH/\Vert T\Vert^2_{\mathcal{B}(\HH,\KK)} + \langle x,y\rangle_\HH -  \langle x,T^* T y\rangle_\HH/\Vert T\Vert^2_{\mathcal{B}(\HH,\KK)}\\
&= \langle x,y\rangle_\HH.
\end{align*}
\end{proof}

To apply the characterization of proximal mappings in $\HH_T$ by Moreau, see Theorem~\ref{thm:1}, we have to compute gradients in $\HH_T$.
Here, the following result is crucial.

\begin{lemma}\label{lem:grad} Let $\HH$ and $\KK$ be real Hilbert spaces with inner products $\langle \cdot,\cdot \rangle_\HH$ 
and $\langle \cdot,\cdot \rangle_\KK$, respectively.
For an operator $T \in \mathcal{B}(\HH,\KK)$ with closed range, let $\HH_T$ be the Hilbert space with inner product~\eqref{inner_neu}.
For (Fr\'echet) differentiable $\Phi\colon\HH \rightarrow \mathbb R$, the gradients $\nabla_\HH \Phi$ and $\nabla_{\HH_T} \Phi$ with respect to the different inner products are related by
\[
\bigl(T^* T/\Vert T\Vert^2_{\mathcal{B}(\HH,\KK)} +  P_{\mathcal{N}(T)}\bigr)\nabla_\HT \, \Phi(x) = \nabla_\HH \Phi(x).
\]
\end{lemma}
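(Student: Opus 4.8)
The plan is to use that the Fr\'echet derivative $D\Phi(x)\in\mathcal{B}(\HH,\R)$ is a single object independent of the inner product: it is characterized by the limit relation~\eqref{frechet}, which refers only to the norm topology on $\HH$. Since the norms $\Vert\cdot\Vert_\HH$ and $\Vert\cdot\Vert_\HT$ are equivalent by~\eqref{new_norm}, the remainder terms $o(\Vert h\Vert_\HH)$ and $o(\Vert h\Vert_\HT)$ agree, so both Hilbert structures yield the \emph{same} derivative $D\Phi(x)$. The gradients $\nabla_\HH\Phi(x)$ and $\nabla_\HT\Phi(x)$ are then just the two Riesz representers of this one functional, i.e.\ for every $h\in\HH$,
\[
\langle\nabla_\HH\Phi(x),h\rangle_\HH = D\Phi(x)\,h = \langle\nabla_\HT\Phi(x),h\rangle_\HT.
\]

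Writing $g\coloneqq\nabla_\HT\Phi(x)$, I would next expand the rightmost inner product by its definition~\eqref{inner_neu} and then transfer every operator onto the test vector $h$ via adjoints in $\langle\cdot,\cdot\rangle_\HH$. Using $\langle Tg,Th\rangle_\KK=\langle T^*Tg,h\rangle_\HH$ together with the self-adjointness of the orthogonal projection $P_{\mathcal{N}(T)}$ in $\HH$, this gives
\[
\langle g,h\rangle_\HT = \Bigl\langle\bigl(T^*T/\Vert T\Vert^2_{\mathcal{B}(\HH,\KK)} + P_{\mathcal{N}(T)}\bigr)g,\,h\Bigr\rangle_\HH.
\]

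Comparing the two displays shows that $\langle(T^*T/\Vert T\Vert^2_{\mathcal{B}(\HH,\KK)}+P_{\mathcal{N}(T)})g-\nabla_\HH\Phi(x),\,h\rangle_\HH=0$ for all $h\in\HH$, and nondegeneracy of $\langle\cdot,\cdot\rangle_\HH$ forces the bracketed vector to vanish, which is precisely the asserted identity. The computation in the second paragraph is routine adjoint manipulation, so the only step that requires genuine attention is the first one, namely the assertion that a single functional $D\Phi(x)$ acts as the derivative for both inner products; this is where the norm equivalence enters in an essential way, and it is what lets me identify both gradients as representers of the one functional $D\Phi(x)$.
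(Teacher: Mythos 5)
Your proof is correct and takes essentially the same route as the paper: both identify $\nabla_\HH\Phi(x)$ and $\nabla_\HT\Phi(x)$ as Riesz representers of the single functional $D\Phi(x)$, expand $\langle\cdot,\cdot\rangle_\HT$ via the adjoint identity $\langle Tg,Th\rangle_\KK=\langle T^*Tg,h\rangle_\HH$ and the self-adjointness of $P_{\mathcal{N}(T)}$, and conclude by nondegeneracy. Your only addition is making explicit, via norm equivalence, why the Fr\'echet derivative is the same object for both inner products -- a point the paper leaves implicit.
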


\begin{proof}
The gradient $\nabla_\HT \Phi(x)$ 
at $x \in \HH$ in the space $\HT$ is given by the vector satisfying
\[\langle \nabla_\HT \Phi(x), h \rangle_\HT = D\Phi(x) h = \langle \nabla_\HH \Phi(x), h \rangle_\HH\]
for all $h \in \HH$.
Since 
\begin{align}
\langle \nabla_\HT \Phi(x), h \rangle_\HT &= \langle T \nabla_\HT \Phi(x), Th \rangle_\KK /\Vert T\Vert^2_{\mathcal{B}(\HH,\KK)} 
+ \langle \nabla_\HT \Phi(x),P_{\mathcal{N}(T)} h \rangle_\HH\\
&= \langle T^* T \nabla_\HT \Phi(x), h \rangle_\HH/\Vert T\Vert^2_{\mathcal{B}(\HH,\KK)} + \langle P_{\mathcal{N}(T)} \nabla_\HT \Phi(x), h \rangle_\HH,
\end{align}
the gradient depends on the chosen inner product through
$$
\bigl(T^* T/\Vert T\Vert^2_{\mathcal{B}(\HH,\KK)} +  P_{\mathcal{N}(T)}\bigr)\nabla_\HT \, \Phi(x) = \nabla_\HH \Phi(x).
$$
\end{proof}

Now, the desired result follows from the next theorem.

\begin{thm}\label{thm:Existence}
Let $b \in \KK$, $T \in \mathcal{B}(\HH,\KK)$ have closed range and $\Prox \colon \KK \to \KK$
be a proximity operator on $\KK$.
Then, the operator $A \coloneqq T^\dagger \, \Prox \, (T \cdot + b) \colon \HT \to \HT$ is a  proximity operator.
\end{thm}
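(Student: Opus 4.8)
The plan is to verify the two hypotheses of Moreau's characterization, Theorem~\ref{thm:1}, \emph{applied in the space} $\HT$: I would exhibit a convex, (Fr\'echet) differentiable potential $\Phi\colon \HH \to \R$ whose gradient with respect to the inner product \eqref{inner_neu} equals $A$, and separately check that $A$ is nonexpansive on $\HT$. Note that convexity and differentiability are independent of the chosen inner product (only the representation of the gradient changes), so it is enough to construct $\Phi$ and compute its gradient.

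First I would produce the potential. Since $\Prox$ is a proximity operator on $\KK$, relation \eqref{proxi} furnishes a convex, differentiable $\Phi_\KK\colon \KK \to \R$ with $\nabla_\KK \Phi_\KK = \Prox$. I then set
\[
\Phi(x) \coloneqq \frac{1}{\Vert T\Vert^2_{\mathcal{B}(\HH,\KK)}}\,\Phi_\KK(Tx+b),
\]
which is convex and differentiable as a positive multiple of the composition of $\Phi_\KK$ with the affine map $x \mapsto Tx+b$. By the chain rule its $\HH$-gradient is $\nabla_\HH \Phi(x) = \Vert T\Vert^{-2}_{\mathcal{B}(\HH,\KK)}\, T^*\Prox(Tx+b)$.

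Next I would transport this to the $\HT$-gradient via Lemma~\ref{lem:grad}. Writing $w \coloneqq \Prox(Tx+b)$, the vector $Ax = T^\dagger w$ lies in $\mathcal{R}(T^\dagger) = \mathcal{R}(T^*)$, hence is orthogonal to $\mathcal{N}(T)$ by \eqref{orth}, so $P_{\mathcal{N}(T)} Ax = 0$. Using \eqref{MP1} and \eqref{orth} (which give $T T^\dagger = P_{\mathcal{R}(T)}$ and $T^* P_{\mathcal{R}(T)} = T^*$), one finds
\[
\Bigl(T^*T/\Vert T\Vert^2_{\mathcal{B}(\HH,\KK)} + P_{\mathcal{N}(T)}\Bigr) A x = \frac{1}{\Vert T\Vert^2_{\mathcal{B}(\HH,\KK)}}\,T^* P_{\mathcal{R}(T)} w = \frac{1}{\Vert T\Vert^2_{\mathcal{B}(\HH,\KK)}}\,T^* w = \nabla_\HH \Phi(x).
\]
Since the operator $T^*T/\Vert T\Vert^2_{\mathcal{B}(\HH,\KK)} + P_{\mathcal{N}(T)}$ is injective, Lemma~\ref{lem:grad} identifies $\nabla_\HT \Phi(x) = Ax$, so $A$ is the $\HT$-gradient of the convex function $\Phi$.

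It remains to show nonexpansiveness on $\HT$. Again using $P_{\mathcal{N}(T)}(Ax - Ay)=0$ and $T A = P_{\mathcal{R}(T)}\Prox(T\cdot+b)$, the definition of $\|\cdot\|_\HT$ gives $\|Ax-Ay\|_\HT^2 = \Vert T\Vert^{-2}_{\mathcal{B}(\HH,\KK)}\,\|P_{\mathcal{R}(T)}(\Prox(Tx+b)-\Prox(Ty+b))\|_\KK^2$; bounding the projection and then $\Prox$ (both nonexpansive on $\KK$) yields $\|Ax-Ay\|_\HT^2 \le \Vert T\Vert^{-2}_{\mathcal{B}(\HH,\KK)}\,\|T(x-y)\|_\KK^2 \le \|x-y\|_\HT^2$. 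Theorem~\ref{thm:1}, read in $\HT$, then delivers the claim. The main obstacle is the gradient bookkeeping of the middle step: selecting the correct normalization $\Vert T\Vert^{-2}_{\mathcal{B}(\HH,\KK)}$ in $\Phi$ and pushing the gradient through Lemma~\ref{lem:grad} so that the Moore--Penrose projections and the scaling match exactly; once that cancellation is in place, both convexity and nonexpansiveness follow routinely.
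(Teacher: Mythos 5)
Your proposal is correct and takes essentially the same route as the paper: Moreau's characterization (Theorem~\ref{thm:1}) in $\HT$, the same potential $\Psi = \Phi_\KK(T\cdot + b)/\Vert T\Vert^2_{\mathcal{B}(\HH,\KK)}$, and the gradient transport of Lemma~\ref{lem:grad}. The only deviations are cosmetic: you verify that $Ax$ solves the gradient equation and invoke injectivity of $T^*T/\Vert T\Vert^2_{\mathcal{B}(\HH,\KK)} + P_{\mathcal{N}(T)}$ (true, but worth the one-line check via the orthogonal decomposition \eqref{orth}) where the paper instead deduces $\nabla_\HT\Psi \in \mathcal{R}(T^*)$ and applies the definition of $T^\dagger$, and you prove only plain nonexpansiveness via the projection bound, whereas the paper establishes the stronger firm nonexpansiveness---both suffice for Theorem~\ref{thm:1}.
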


\begin{proof}
In view of Theorems \ref{lem:1} and \ref{thm:1}, 
it suffices to show that $A$
is nonexpansive and that there exists a convex function $\Psi \colon \HT \to \R$ with  $A  = \nabla_\HT \Psi$.

1.~First, we show that $A$ is firmly nonexpansive, and thus nonexpansive.
By \eqref{orth}, we see that
\begin{equation} \label{MP2}
P_{\mathcal{N}(T)} T^\dagger = 0.
\end{equation}
Using this and \eqref{MP1}, it follows
\begin{align}
\Vert Ax - Ay \Vert_\HT^2 
&= \frac{\Vert  T T^\dagger \left(  \, \Prox \,  (T x + b) -  \Prox \,  (T y + b) \right)\Vert_\KK^2}{\Vert T\Vert^2_{\mathcal{B}(\HH,\KK)}} + \|P_{\mathcal{N}(T)} (Ax -Ay)\|^2_\HH \\
&\leq \frac{\Vert \Prox \, (T x + b) -   \Prox \, (T y + b) \Vert_\KK^2}{\Vert T\Vert^2_{\mathcal{B}(\HH,\KK)}}. \label{xx}
\end{align}
By \eqref{MP2} and \eqref{MP1}, we obtain
\begin{align*}
\Vert T\Vert^2_{\mathcal{B}(\HH,\KK)}\bigl \langle  A x - A y, x-y \bigr \rangle_\HT
&= 
\bigl\langle T T^\dagger \bigl( \Prox \, (T x +b) - \Prox \, (T y + b) \bigr), Tx  - Ty  \bigr\rangle_\KK\\ 
&=  
\bigl\langle  P_{\mathcal{R}(T)} \bigl( \Prox \, (T x + b) - \Prox \, (T y + b) \bigr), Tx  - Ty \bigr\rangle_\KK\\
&=
\bigl\langle \Prox \, (T x + b) - \Prox \, (T y + b), Tx - Ty \bigr\rangle_\KK\\
&=
\bigl\langle \Prox \, (T x + b) - \Prox \, (T y + b), Tx + b - (Ty + b) \bigr\rangle_\KK,
\end{align*}
and since $\Prox$ is  firmly nonexpansive with respect to $\|\cdot\|_{\KK}$, see \eqref{star_ap},
the estimate \eqref{xx} further implies that $A$ is firmly nonexpansive
\begin{align}
\bigl \langle Ax - Ay , x-y  \bigr \rangle_\HT
&\geq 
\frac{\Vert \Prox \, (T x + b) - \Prox \, (T y + b) \Vert^2_\KK}{\Vert T\Vert^2_{\mathcal{B}(\HH,\KK)}}
\geq 
\Vert Ax - Ay \Vert_\HT^2.
\end{align}

2.~It remains to prove that there exists a convex function $\Psi\colon \HT \to \R$ with
$\nabla_\HT \Psi = A$.
Since $\Prox$ is a proximity operator, there exists $\Phi \colon \HH \to\R$ with $\Prox = \nabla_\KK \Phi$.
Then, a natural candidate is given by $\Psi =\Phi \, (T \cdot + b)/\Vert T\Vert^2_{\mathcal{B}(\HH,\KK)}$.
Using the definition of the gradient and the chain rule, it holds for all $x,h\in \HH$ that
\begin{align}
\langle \nabla_\HH \Psi(x) , h \rangle_\HH &=  
D\Psi (x)h = \frac{D\Phi(Tx + b)\,Th}{\Vert T\Vert^2_{\mathcal{B}(\HH,\KK)}}= 
\frac{\langle \nabla_\KK \Phi(Tx + b) , Th \rangle_\KK}{\Vert T\Vert^2_{\mathcal{B}(\HH,\KK)}}\\ &= \frac{\langle T^* \Prox \, (Tx + b)  , h \rangle_\HH}{\Vert T\Vert^2_{\mathcal{B}(\HH,\KK)}}.
\end{align}
Incorporating Lemma~\ref{lem:grad}, we conclude
$$
(T^* T/\Vert T\Vert^2_{\mathcal{B}(\HH,\KK)} +  P_{\mathcal{N}(T)}) \nabla_\HT \Psi  = \nabla_\HH \Psi(x) = T^* \, \Prox \, (Tx+ b)/\Vert T\Vert^2_{\mathcal{B}(\HH,\KK)},
$$ 
which implies 
$T^*\, T\, \nabla_\HT \Psi = T^* \, \Prox \, (Tx + b)$ and $\nabla_\HT \Psi \in \mathcal{R}(T^*)$.
By definition of $T^\dagger$, we obtain $\nabla_\HT \Psi = A$.
Finally, $\Psi$ is convex as it is the concatenation of a convex function with a linear function.
\end{proof}

Let 
$$(f\square g)(x) \coloneqq \inf_{y\in \HH } f(y) + g(x-y)$$ 
denote the \emph{infimal convolution} of $f,g \in \Gamma_0(\HH)$ and 
$x \mapsto \iota_S(x)$ the \emph{indicator function} of the set $S$ taking the value $0$ if $x \in S$ and $+\infty$ otherwise.

For $\Prox \coloneqq \prox_{g}$ 
with $g \in \Gamma_0(\HH)$,
we are actually able to explicitly compute 
$f \in \Gamma_0(\HH)$ such that $T^\dagger \, \Prox \, (T \cdot + b) = \prox_f$ on $\HT$.
Clearly, this also gives an alternative proof for Theorem~\ref{thm:Existence}.

\begin{thm}\label{thm:ExpForm}
Let $b \in \KK$, $T \in \mathcal{B}(\HH,\KK)$ with closed range 
and $\Prox \coloneqq \prox_{g}$ for some $g \in \Gamma_0(\KK)$.
Then, $T^{\dagger} \, \prox_{g} \, (T \cdot + b) \colon \HT \to \HT$ is the proximity operator on $\HT$ of $f \in \Gamma_0(\HH)$ given by
\begin{equation} \label{prox_expl}
f(x) 
\coloneqq 
g  \square \bigl( \tfrac12 \| \cdot \|_{\KK}^2 + \iota_{\mathcal{N}(T^*)}  \bigr)  (Tx + b)/\Vert T\Vert^2_{\mathcal{B}(\HH,\KK)} + \iota_{\mathcal{R}(T^*)}(x).
\end{equation}
This expression simplifies to
\begin{align}
 f(x) &=  g  \square \bigl( \tfrac12 \| \cdot \|_{\KK}^2 + \iota_{\mathcal{N}(T^*)}  \bigr)  (Tx + b)/\Vert T\Vert^2_{\mathcal{B}(\HH,\KK)} \quad \mbox{if $T$ is injective},\\
 f(x) &=  g(Tx + b)/\Vert T\Vert^2_{\mathcal{B}(\HH,\KK)} + \iota_{\mathcal{R}(T^*)}(x) \quad \mbox{if $T$ is surjective},\\
 f(x) &=  g(Tx + b)/\Vert T\Vert^2_{\mathcal{B}(\HH,\KK)}  \quad \mbox{if $T$ is bijective}.
\end{align}
\end{thm}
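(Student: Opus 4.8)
The plan is to verify the claim directly: I will show that the right-hand side operator $A \coloneqq T^\dagger\,\prox_g(T\cdot + b)$ solves the problem defining $\prox_f$ on $\HT$, i.e. that for every $x \in \HH$ the point $A(x)$ is the unique minimizer of $y \mapsto \tfrac12\|x-y\|_\HT^2 + f(y)$. (This is cleaner than starting from the potential $\Psi$ produced in Theorem~\ref{thm:Existence} and inverting a Moreau envelope, although that route is available too.) The first reduction exploits the summand $\iota_{\mathcal{R}(T^*)}$ in \eqref{prox_expl}: it forces the minimizer to lie in $\mathcal{R}(T^*)$, and there $P_{\mathcal{N}(T)}y = 0$, so by \eqref{inner_neu} the term $\|P_{\mathcal{N}(T)}(x-y)\|_\HH^2$ of $\|x-y\|_\HT^2$ is constant in $y$ and drops out of the argmin. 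Hence the problem collapses to minimizing $\tfrac{1}{\|T\|^2}\bigl[\tfrac12\|T(x-y)\|_\KK^2 + (g\square h)(Ty+b)\bigr]$ over $y \in \mathcal{R}(T^*)$, where $h \coloneqq \tfrac12\|\cdot\|_\KK^2 + \iota_{\mathcal{N}(T^*)}$. Since $T$ restricts to a bijection $\mathcal{R}(T^*)\to\mathcal{R}(T)$ with inverse $T^\dagger$ (see \eqref{MP1}, \eqref{MP3}), I substitute $u = Ty \in \mathcal{R}(T)$ and recover $\hat y = T^\dagger \hat u$ at the end.

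Next I unfold the infimal convolution into a joint minimization over $(u,z)$ subject to $u+b-z \in \mathcal{N}(T^*)$, with objective $\tfrac12\|Tx-u\|_\KK^2 + g(z) + \tfrac12\|u+b-z\|_\KK^2$. The key idea is to remove the constraint using the orthogonal splitting \eqref{orth}, $\KK = \mathcal{R}(T)\oplus\mathcal{N}(T^*)$: writing $n \coloneqq u+b-z \in \mathcal{N}(T^*)$, the pair $(u,n)\in\mathcal{R}(T)\times\mathcal{N}(T^*)$ is in affine bijection with $z$ via $u = P_{\mathcal{R}(T)}(z-b)$ and $n = P_{\mathcal{N}(T^*)}(b-z)$. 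Substituting and using $Tx \in \mathcal{R}(T)$, the two quadratic terms should recombine by the Pythagorean identity into the single term $\tfrac12\|(Tx+b)-z\|_\KK^2$, so that the inner problem becomes $\min_z \bigl\{\tfrac12\|(Tx+b)-z\|_\KK^2 + g(z)\bigr\}$, whose minimizer is $\hat z = \prox_g(Tx+b)$ by \eqref{prox_usual}. Back-substitution then yields $\hat u$ and finally $\hat y = T^\dagger \hat u$, which I match against $A(x)$ using $T^\dagger P_{\mathcal{R}(T)} = T^\dagger$ from \eqref{MP3}.

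It remains to confirm $f \in \Gamma_0(\HH)$: the infimal convolution of two functions in $\Gamma_0(\KK)$ is convex, precomposition with the affine map $x\mapsto Tx+b$ preserves convexity, and adding $\iota_{\mathcal{R}(T^*)}$ (the indicator of a closed subspace) keeps the function in $\Gamma_0$; properness and lower semicontinuity follow from the coercive quadratic in $h$ together with the closedness of $\mathcal{R}(T)$. The three displayed special cases then drop out by inspection: when $T$ is surjective, $\mathcal{N}(T^*)=\{0\}$ makes $h=\iota_{\{0\}}$ the unit for $\square$, so $g\square h = g$; when $T$ is injective, $\mathcal{R}(T^*)=\HH$ kills $\iota_{\mathcal{R}(T^*)}$; the bijective case combines both. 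The step I expect to be the main obstacle is the faithful bookkeeping of the affine shift $b$ through the decomposition \eqref{orth}: one must track how $P_{\mathcal{R}(T)}b$ and $P_{\mathcal{N}(T^*)}b$ enter the two quadratics and check that the recombination reproduces \emph{exactly} the shift carried inside $\prox_g(T\cdot+b)$ in $A$. This is the single place where the computation is genuinely delicate rather than routine, and it is worth isolating before filling in the arithmetic.
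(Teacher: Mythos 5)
Your plan reconstructs, in reverse order, exactly the computation in the paper's proof of Theorem~\ref{thm:ExpForm}: the same Pythagorean splitting of $\KK$ along \eqref{orth}, the same inner minimization over $\mathcal{N}(T^*)$ producing the infimal convolution $g\square\bigl(\tfrac12\|\cdot\|_\KK^2+\iota_{\mathcal{N}(T^*)}\bigr)$, and the same use of the bijection $T\colon\mathcal{R}(T^*)\to\mathcal{R}(T)$ with inverse $T^\dagger$ to convert $\tfrac12\|Ty-Tx\|_\KK^2$ into the $\HT$-distance on $\mathcal{R}(T^*)$. Your reductions, the unfolding of the infimal convolution, and the recombination $\|Tx-u\|_\KK^2+\|n\|_\KK^2=\|(Tx+b)-z\|_\KK^2$ are all correct. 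The gap is precisely at the point you flagged and then waved through. Your affine bijection gives $\hat u=P_{\mathcal{R}(T)}(\hat z-b)$ with $\hat z=\prox_g(Tx+b)$, so back-substitution yields
\begin{equation}
\hat y \;=\; T^\dagger\hat u\;=\;T^\dagger P_{\mathcal{R}(T)}(\hat z-b)\;=\;T^\dagger\prox_g(Tx+b)-T^\dagger b,
\end{equation}
and the identity $T^\dagger P_{\mathcal{R}(T)}=T^\dagger$ from \eqref{MP3} cannot produce the missing term $T^\dagger b$. What your argument actually proves is that the $\HT$-proximity operator of the $f$ in \eqref{prox_expl} is $T^\dagger\bigl(\prox_g(T\cdot+b)-b\bigr)$, which differs from the claimed operator $A=T^\dagger\prox_g(T\cdot+b)$ by $T^\dagger b=T^\dagger P_{\mathcal{R}(T)}b$; the match fails whenever $P_{\mathcal{R}(T)}b\neq0$.

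To be clear, this is not a defect of your route alone: the discrepancy sits in the statement, and the paper's printed proof makes the same silent jump at the reparametrization $z=z_1+z_2+b$, $z_1\in\mathcal{R}(T)$, $z_2\in\mathcal{N}(T^*)$, which replaces $T^\dagger\hat z$ by $T^\dagger P_{\mathcal{R}(T)}(\hat z-b)$ without justification. A one-dimensional check makes this concrete: for $\HH=\KK=\R$, $T=\mathrm{Id}$, $b=1$, $g=\tfrac12|\cdot|^2$, formula \eqref{prox_expl} gives $f(y)=\tfrac12(y+1)^2$ with $\prox_f(x)=(x-1)/2$, whereas $T^\dagger\prox_g(Tx+b)=(x+1)/2$. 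Everything is consistent exactly when $b\in\mathcal{N}(T^*)$ — in particular $b=0$, the case needed for Corollary~\ref{cor:f1} — and under that hypothesis your proof closes and is a legitimate, slightly cleaner verification-style rewrite of the paper's argument. For general $b$ the function must be repaired; indeed your own computation, combined with the standard tilt-and-shift prox calculus, shows that $A=\prox_{\tilde f}$ on $\HT$ for
\begin{equation}
\tilde f(y)\coloneqq g\square\bigl(\tfrac12\|\cdot\|_\KK^2+\iota_{\mathcal{N}(T^*)}\bigr)\bigl(Ty+P_{\mathcal{N}(T^*)}b\bigr)/\Vert T\Vert^2_{\mathcal{B}(\HH,\KK)}+\iota_{\mathcal{R}(T^*)}(y)-\langle T^\dagger b,y\rangle_\HT,
\end{equation}
which reduces to \eqref{prox_expl} when $P_{\mathcal{R}(T)}b=0$. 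So the delicate step you isolated is a genuine obstruction rather than routine arithmetic: either add the hypothesis $P_{\mathcal{R}(T)}b=0$ or correct $f$ as above before filling in the details.
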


\begin{proof}
By \eqref{MP3} and \eqref{orth}, we obtain
\begin{align}
&T^\dagger \,  \prox_{g} \, (T x + b)\\ 
=&
T^\dagger \argmin_{z \in \KK} \bigl\{ \tfrac12 \|z - Tx - b\|_{\KK}^2 +  g(z)\bigr\}\\
=&
T^\dagger P_{\mathcal{R}(T)} \argmin_{z_1 \in \mathcal{R}(T), z_2 \in \mathcal{N}(T^*)}
\bigl\{ \tfrac12 \|z_1 + z_2 - Tx\|_{\KK}^2 +  g(z_1 + z_2 + b) \bigr\}\\
=&
T^\dagger \argmin_{z_1 \in \mathcal{R}(T)} \inf_{z_2 \in  \mathcal{N}(T^*)}  
\bigl\{ \tfrac12 \|z_1 - Tx\|_{\KK}^2 + \tfrac12 \|z_2\|_{\KK}^2 +  g(z_1 + z_2 + b)  \bigr\}\\
=& 
T^\dagger \argmin_{z_1 \in \mathcal{R}(T)} \Bigl\{ \tfrac12 \|z_1 - Tx\|_{\KK}^2 
+ \inf_{z_2 \in  \mathcal{N}(T^*)} \bigl\{ \tfrac12\|z_2\|_{\KK}^2 + g(z_1 + z_2 + b)  \bigr\} \Bigr\}\\
=&
T^\dagger T \argmin_{y \in \mathcal{R}(T^*)} \Bigl\{ 
\tfrac12 \|Ty - Tx\|_{\KK}^2 
+ 
\inf_{z_2 \in  \mathcal{N}(T^*)} 
\bigl\{ \tfrac12\|z_2\|_{\KK}^2 + g(Ty + z_2 + b) \bigr\}
\Bigr\}
\end{align}
and by \eqref{MP1} further
\begin{align}
&T^\dagger \,  \prox_{g} \, (T x + b)\\ 
=&
\argmin_{y \in \mathcal{R}(T^*)} \Bigl\{ 
\tfrac12 \|Ty - Tx\|_{\KK}^2 
+ 
\inf_{z_2 \in  \mathcal{N}(T^*)} 
\bigl\{ \tfrac12\|z_2\|_{\KK}^2 + g(Ty + z_2 + b) \bigr\}
\Bigr\}\\
=&
\argmin_{y \in \mathcal{R}(T^*)} 
\Bigl\{ \tfrac12 \|y - x\|_\HT^2 + \inf_{z_2 \in  \mathcal{N}(T^*)} 
\bigl\{ \tfrac12\|z_2\|_{\KK}^2 
+  g(Ty + z_2 + b)  \bigr\}/\Vert T\Vert^2_{\mathcal{B}(\HH,\KK)} \Bigr\} \label{magic}\\
=& 
\argmin_{y \in \HH} \Bigl\{ \tfrac12 \|y - x\|_\HT^2 + 
g \square \bigl( \tfrac12 \| \cdot \|_{\KK}^2 + \iota_{\mathcal{N}(T^*)} \bigr) (Ty + b)/\Vert T\Vert^2_{\mathcal{B}(\HH,\KK)}  + \iota_{\mathcal{R}(T^*)}(y)\Bigr\}.
\end{align}
Hence, we conclude that $T^{\dagger} \, \prox_{g} \, (T \cdot + b)$ 
is the proximity operator on $\HT$ of $f$ in~\eqref{prox_expl}.
\end{proof}

Note that for surjective $T$ and $b=0$, the function $f$ is in general a weaker regularizer than $g$. 
This is necessary since for the latter \eqref{magic} would lead to
\begin{align*}
\argmin_{y \in \mathcal{R}(T^*)} \bigl\{\tfrac12 \| x-y\|_{\HH_T}^2 +   g(Ty)/\Vert T\Vert^2_{\mathcal{B}(\HH,\KK)} \bigr\} 
&= T^\dagger \argmin_{z \in \KK} 
\bigl\{ \tfrac12 \|z - Tx\|_\KK^2 + g(z) +  \iota_{{\cal R}(T)}(z)\bigr\}\\&\neq T^\dagger \prox_{g} (T x).
\end{align*}

%----------------------------------------------------------------------------------------------------------
\section{Frame Soft Shrinkage as Proximity Operator} \label{sec:frame}
%----------------------------------------------------------------------------------------------------------
In this section, we investigate the frame soft shrinkage as a special proximity operator.
Let $\KK = \ell_2$ be the Hilbert space of square summable sequences $c = \{c_k\}_{k \in \mathbb N}$ with norm 
$\|c \|_{\ell_2} \coloneqq ( \sum_{k \in \mathbb N} |c_k|^2)^{\frac12}$ and assume that $\HH$ is separable.
A sequence $\{x_k\}_{k\in\N}$, $x_k \in \HH$, is called a \emph{frame} of $\HH$, if constants $0 < A \le B < \infty$ exist such that
for all $x \in \HH$,
\begin{equation} \label{frame}
A \|x\|_\HH^2 \le \sum_{k\in \N} |\langle x,x_k \rangle_\HH |^2 \le B \|x\|_\HH^2.
\end{equation}
Given a frame $\{x_k\}_{k\in\N}$  of $\HH$, 
the  corresponding \emph{analysis operator}  $T \colon \HH \to\ell_2$ is defined as 
$$Tx=\bigl\{ \langle x,x_k \rangle_\HH\bigr\}_{k\in\N}, \quad x\in \HH.$$
Its adjoint $T^*\colon\ell_2 \to \HH$ is the \emph{synthesis operator} given by 
$$T^*\{c_k\}_{k\in\N} = \sum_{k\in\N}  c_k x_k, \quad \{c_k\}_{k\in\N} \in\ell_2.$$ 
By composing $T$ and $T^*$, we obtain the \emph{frame operator} 
$$T^*Tx = \sum_{k\in\N} \langle x , x_k \rangle_\HH x_k, \quad x\in \HH,$$
which is invertible on $\HH$, see \cite{CB2016}, such that
\[x= \sum_{k\in\N} \langle x , x_k \rangle_{\mathcal H} (T^*T)^{-1} x_k, \quad x\in \HH.\]
The sequence $\{ (T^*T)^{-1}x_k\}_{k\in\N}$ is called the \emph{canonical dual frame} of $\{ x_k\}_{k\in\N}$. 
If 
$$T^*T = \|T^* T\| \mathrm{Id}_\HH = \|T\|^2 \mathrm{Id}_\HH,$$
then $\{x_k\}_{k \in \mathbb N}$ is called a \emph{tight frame}, 
and for $T^*T = \mathrm{Id}_\HH$  a \emph{Parseval frame}.
Here, Lemma~\ref{lem:tight} comes into the play.
Note that $T^\dagger$ is indeed the synthesis operator for the canonical dual frame of $\{ f_k\}_{k\in\N}$.
The relation between linear, bounded, injective operators of closed range and frame analysis operators is given in the next proposition.

\begin{proposition} \label{prop:1} 
	\begin{itemize}
		\item[i)]
		An operator $T \in \mathcal{B}(\HH,\ell_2)$ is injective and has closed range if and only if it is the analysis operator of some frame of $\HH$. 
		\item[ii)] An operator $T \in \mathcal{B}(\ell_2,\HH)$ is surjective if and only if it is the synthesis operator of some frame of $\HH$. 
	\end{itemize}
\end{proposition}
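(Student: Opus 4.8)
The plan is to base both equivalences on the elementary fact that a bounded linear operator between Hilbert spaces is injective with closed range if and only if it is bounded below, together with the duality between surjectivity of an operator and bounded-below-ness of its adjoint. Part ii) would then follow from part i) by passing to adjoints.

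For the forward direction of part i), I would start from a frame $\{x_k\}_{k\in\N}$ with bounds $0<A\le B<\infty$ and observe that its analysis operator $T$ satisfies $\|Tx\|_{\ell_2}^2=\sum_{k\in\N}|\langle x,x_k\rangle_\HH|^2$, so that the frame inequalities \eqref{frame} become $A\|x\|_\HH^2\le\|Tx\|_{\ell_2}^2\le B\|x\|_\HH^2$. The upper estimate yields $T\in\mathcal{B}(\HH,\ell_2)$ with $\|T\|^2\le B$, while the lower estimate shows that $T$ is bounded below, hence injective and, by a standard Cauchy-sequence argument, of closed range. For the converse, given $T\in\mathcal{B}(\HH,\ell_2)$ injective with closed range, I would set $x_k\coloneqq T^*e_k$ for the canonical orthonormal basis $\{e_k\}_{k\in\N}$ of $\ell_2$. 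Then $\langle x,x_k\rangle_\HH=\langle x,T^*e_k\rangle_\HH=\langle Tx,e_k\rangle_{\ell_2}$ for all $x\in\HH$, so $T$ is exactly the analysis operator of $\{x_k\}$ and $\sum_{k\in\N}|\langle x,x_k\rangle_\HH|^2=\|Tx\|_{\ell_2}^2$. The upper frame bound $B=\|T\|^2$ is immediate, and for the lower bound I would use that $T$, being injective with closed range, is a bijection onto the Hilbert space $\mathcal{R}(T)$ and therefore bounded below by the bounded inverse theorem, giving some $A>0$ with $\|Tx\|_{\ell_2}^2\ge A\|x\|_\HH^2$.

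For part ii), I would reduce to part i) by duality. An operator $T\in\mathcal{B}(\ell_2,\HH)$ is surjective if and only if its adjoint $T^*\in\mathcal{B}(\HH,\ell_2)$ is bounded below, equivalently injective with closed range; this is the usual consequence of the closed range theorem for bounded operators between Hilbert spaces. By part i), $T^*$ is then the analysis operator of some frame $\{x_k\}_{k\in\N}$, and since $(T^*)^*=T$ in a Hilbert space, $T$ is the corresponding synthesis operator. The reverse implication is symmetric: if $T$ is the synthesis operator of a frame, then $T^*$ is its analysis operator, which is bounded below by part i), whence $T=(T^*)^*$ is surjective.

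The routine parts are the frame-inequality bookkeeping and the identity $\langle x,T^*e_k\rangle_\HH=\langle Tx,e_k\rangle_{\ell_2}$. The hard part will be invoking the two functional-analytic equivalences, namely \emph{injective with closed range} $\Leftrightarrow$ \emph{bounded below} and \emph{surjective} $\Leftrightarrow$ \emph{adjoint bounded below}, cleanly; these are standard but should be cited precisely rather than reproved, since that is where the closed-range hypothesis genuinely enters.
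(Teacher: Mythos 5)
Your proof is correct and takes essentially the same route as the paper: the converse of i) uses the identical construction $x_k \coloneqq T^*\delta_k$ together with $\sum_{k\in\N}|\langle x,x_k\rangle_\HH|^2 = \Vert Tx\Vert_{\ell_2}^2$, and your adjoint-duality argument for ii) (with the frame $\{T\delta_k\}_{k\in\N}$ arising from applying i) to $T^*$) is just an inline proof of the textbook result \cite[Thm.~5.5.1]{CB2016} that the paper cites. The only difference is expository: where the paper outsources the bounded-below/closed-range facts to \cite{CB2016}, you derive them directly from the bounded inverse and closed range theorems, which is a legitimate filling-in of the same argument.
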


\begin{proof}
	$i)$ If $T$ is the analysis operator for a frame $\{x_k\}_{k\in\N}$, 
	then  $T$ is bounded, injective and has closed range, see \cite{CB2016}.
	Conversely, assume that $T \in \mathcal{B}(\HH,\ell_2)$ is injective and that $\mathcal{R}(T)$ is closed.
	By \eqref{orth}, it holds $\mathcal{R}(T^*) = \mathcal H$.
	Let $\{\delta_k\}_{k\in\N}$ be the canonical basis of $\ell_2$ 
	and set 
	$\{x_k \}_{k\in\N}\coloneqq \{T^{*} \delta_k\}_{k\in\N}$. 
	Since $\sum_{k\in \N} |\langle x,x_k \rangle_\HH |^2 = \Vert Tx \Vert_{\ell_2}^2$, 
	we conclude that $\{x_k \}_{k\in\N}$ is a frame of $\HH$ and that $T$ is the corresponding analysis operator.\\
	$ii)$ Let $\{x_k\}_{k\in\N} = \{T\delta_k\}_{k\in\N}$.
	Then, the result follows from \cite[Thm.~5.5.1]{CB2016}.
\end{proof}

The soft shrinkage operator $S_\lambda$ on $\ell_2$ (applied componentwise)  
is the proximity operator corresponding to the function $g \coloneqq \lambda \| \cdot \|_1$, $\lambda>0$.
As immediate consequence of Theorem~\ref{thm:ExpForm} we obtain the following corollary.

\begin{corollary}\label{cor:f1}
	Assume that $T\colon \HH \rightarrow \ell_2$ is an analysis operator for some frame of $\HH$ and  $\Prox\colon \ell_2 \to \ell_2$ is an arbitrary proximity operator. 
	Then, $T^{\dagger} \, \Prox \,  T $ is itself a proximity operator on $\HH$ equipped with the norm
	$\| \cdot \|_\HT$.
	In particular, if $\Prox \coloneqq S_\lambda$ with $\lambda >0$, then
	\begin{align*}
	&T^{\dagger} \, S_\lambda \, (T x) = \argmin_{y \in \HH} \bigl\{ \|x-y\|_\HT^2 +  f(y)\bigr\},\\ 
	&f(y) \coloneqq \lambda \|\cdot\|_1 \square \bigl( \tfrac12 \| \cdot \|_{\ell_2}^2 + \iota_{\mathcal{N}(T^*)}  \bigr) (Ty)/\Vert T\Vert^2_{\mathcal{B}(\HH,\KK)}.
	\end{align*}
\end{corollary}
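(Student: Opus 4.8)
The plan is to obtain the corollary as a direct specialization of the results already proven, since no new machinery is needed. First I would record the structural input: by Proposition~\ref{prop:1}(i), an analysis operator $T\colon\HH\to\ell_2$ of a frame is exactly an injective operator with closed range. In particular $T$ has closed range, so Theorem~\ref{thm:Existence} applies with $\KK=\ell_2$ and $b=0$: for \emph{any} proximity operator $\Prox\colon\ell_2\to\ell_2$ the operator $T^\dagger\,\Prox\,T$ is a proximity operator on $\HT$. This already settles the general assertion of the corollary, and the only thing to supply is the relevant norm $\|\cdot\|_\HT$ from \eqref{inner_neu} and \eqref{new_norm}.

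For the explicit formula in the case $\Prox=S_\lambda$, I would recall (as noted just before the corollary, and computed componentwise in Example~\ref{ex:1}) that the soft shrinkage $S_\lambda$ on $\ell_2$ is $\prox_g$ for $g\coloneqq\lambda\|\cdot\|_1$. Then I invoke Theorem~\ref{thm:ExpForm} with this $g$ and $b=0$. Since $T$ is injective, the injective-case simplification in that theorem removes the term $\iota_{\mathcal{R}(T^*)}$ and leaves
\[
f(y)=g\,\square\,\bigl(\tfrac12\|\cdot\|_{\ell_2}^2+\iota_{\mathcal{N}(T^*)}\bigr)(Ty)\big/\Vert T\Vert^2_{\mathcal{B}(\HH,\KK)},
\]
which is precisely the stated $f$ after substituting $g=\lambda\|\cdot\|_1$. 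Pairing this with the variational characterization of $\prox_f$ on $\HT$ yields the claimed identity for $T^\dagger S_\lambda T$.

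Because the corollary is a genuine consequence of Theorem~\ref{thm:ExpForm}, there is no real obstacle; the points that deserve attention are bookkeeping rather than conceptual. The first is to confirm that $g=\lambda\|\cdot\|_1$ truly lies in $\Gamma_0(\ell_2)$, so that Theorem~\ref{thm:ExpForm} is applicable and the identification $S_\lambda=\prox_{\lambda\|\cdot\|_1}$ holds on all of $\ell_2$: the $\ell_1$-norm is proper (finite on finitely supported sequences), convex, and lower semicontinuous as the pointwise supremum of the continuous truncations $c\mapsto\lambda\sum_{k\le n}|c_k|$, even though it equals $+\infty$ on $\ell_2\setminus\ell_1$. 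The second is to track the normalization constant $\Vert T\Vert^2_{\mathcal{B}(\HH,\KK)}$, which appears both in the inner product \eqref{inner_neu} defining $\HT$ and in the expression for $f$; I would carry it through verbatim from Theorem~\ref{thm:ExpForm} rather than re-deriving it, and likewise use injectivity of $T$ to discard the $\iota_{\mathcal{R}(T^*)}$ term with the argument already given there.
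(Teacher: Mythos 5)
Your proposal is correct and follows essentially the same route as the paper, which obtains the corollary as an immediate consequence of Theorem~\ref{thm:ExpForm} (with $g=\lambda\|\cdot\|_1$, $b=0$, and the injective-case simplification justified via Proposition~\ref{prop:1}(i)). Your explicit check that $\lambda\|\cdot\|_1\in\Gamma_0(\ell_2)$ is a small piece of bookkeeping the paper leaves implicit, but it does not change the argument.
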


Finally, let us have a look at the finite dimensional setting with $\HH \coloneqq \R^d$, $\KK \coloneqq \R^n$, $n\ge d$.
Then, we have for any $T \in \R^{n,d}$ with full rank $d$ and the proximity operator $S_\lambda$ with $\lambda >0$ on $\R^n$ that
\begin{align}
&T^{\dagger} \, S_\lambda \, \left( T (x) \right) = \argmin_{y \in \R^d} \bigl\{ \tfrac12 \|x-y\|_\HT^2 + f(y) \bigr\},\\
&f(y) \coloneqq \lambda \|\cdot\|_1 \square \bigl( \tfrac12 \| \cdot \|_{2}^2 + \iota_{\mathcal{N}(T^*)}  \bigr) (Ty)/\Vert T\Vert^2_{\mathcal{B}(\HH,\KK)}.\label{rechnen}
\end{align}

\begin{example} \label{ex:2}
	We want to compute $f$ for the matrix $T\colon \R^{1} \to \R^{2}$ 
	given by $T = ( 1 , 2)^\tT $ and the soft shrinkage operator  $S_\lambda$ on $\R^2$ with $\lambda >0$.
	Note that this example was also considered in \cite{{GP2019}}.
	By \eqref{rechnen} and since  $x = (x_1, x_2)^\tT \in \mathcal{N}(T^*)$ 
	if and only if $x_1 = -2 x_2$, we obtain
	\begin{align}
	f(y) \Vert T\Vert^2_{\mathcal{B}(\HH,\KK)}
	& =   \lambda \Vert \cdot \Vert_1 \square \bigl( \tfrac12 \| \cdot \|_2^2 + \iota_{\mathcal{N}(T^*)} (\cdot) \bigr)  (Ty)\\
	&= \min_{Ty = z+x}  \left\{ \lambda \|z\|_1 +  \tfrac12 \| x \|_{2}^2 + \iota_{\mathcal{N}(T^*)}(x) \right\}\\ 
	&=  \min_{x \in \R^2}   \left\{ \lambda\|Ty-x\|_1 +  \tfrac12 \| x \|_{2}^2+ \iota_{\mathcal{N}(T^*)}(x) \right\}\\
	& = \min_{x \in \R^2} \bigl\{ \lambda \bigl\Vert 
	(y , 2y )^\tT  - ( x_1, x_2)^\tT  \bigr\Vert_1 + \tfrac{1}{2} \Vert x \Vert_2^2 + \iota_{\mathcal{N}(T^*)}(x) \bigr\}\\
	& =  \min_{x_2 \in \R} \bigl\{
	\lambda \vert y +2x_2 \vert +  \lambda \vert2y-x_2 \vert + \tfrac{5}{2} x_2^2 \bigr\}.
	\end{align}
	Consider the strictly convex function  $g_y(x_2) = \lambda \vert y +2x_2 \vert + \lambda \vert2y-x_2 \vert + \frac{5}{2} x_2^2$.
	For $\vert y \vert \leq \frac{2}{5} \lambda$, it holds
	\begin{equation}
	0 \in \partial_{x_2}  g_y \left(-\tfrac{y}{2} \right) = [-2\lambda,2\lambda] - \lambda \mathrm{sgn}(y) -\tfrac{5}{2} y.
	\end{equation}
	Hence, by Fermat's theorem, the unique minimizer of $g_y(x_2)$  is given by $-\frac{y}{2}$.
	Consequently, we have for $\vert y \vert \leq \frac{2}{5}\lambda$ that
	\begin{equation}
	f(y) = \tfrac{1\lambda}{2} \vert y \vert + \tfrac{1}{8} y^2.
	\end{equation}
	For $\vert y \vert > \frac{2\lambda}{5}$, the function $g_y$ is differentiable in $-\frac{\lambda}{5} \mathrm{sgn}(y)$ and it holds
	\begin{equation}
	\partial_{x_2}  g_y\bigl(-\tfrac{\lambda}{5} \mathrm{sgn}(y)\bigr) = 2 \lambda \mathrm{sgn}(y) - \lambda \mathrm{sgn}(y) - \lambda \mathrm{sgn}(y) = 0.
	\end{equation}
	Therefore, for $\vert y \vert > \frac{2\lambda}{5}$, the minimizer of $g_y$ is $-\frac{\lambda}{5} \mathrm{sgn}(y)$ and
	\begin{equation}
	f(y) = \tfrac{3\lambda}{5}\vert y \vert - \tfrac{\lambda^{2}}{50}.
	\end{equation}
	Choosing, e.g., $\lambda = \frac{1}{3}$ we obtain 
	\[ f (y) = \left\{ \begin{array}{ll}
	 \frac{1}{6} \vert y \vert + \frac{1}{8} y^{2} & \vert y \vert \le \frac{2}{15} \\[1ex]
	 \tfrac15\vert y \vert - \frac{1}{450} & \vert y \vert > \frac{2}{15} \end{array} \right.,
	 \]
	 which is a good approximation of $\tfrac15 \vert y \vert$. 
\end{example}

%%%%%%%%%%%%%%%%%%%%%%%%%%%%%%%%%%%%%%%%%%%%%%%%%%%%%%%%%%%%%%%%%%%%%%%%%%%%%%%%%%%%%%%%%%%%%%%%%%%%%%%%%%%%%
%----------------------------------------------
\section{Proximal Neural Networks} \label{sec:PNN}
%----------------------------------------------
In this section, we  consider neural networks  (NNs) consisting of  $K \in {\mathbb N}$ layers with dimensions $n_{1}, \ldots , n_{K}$ defined by mappings 
$\Phi = \Phi(\cdot\,; u)\colon {\mathbb R}^{d} \to {\mathbb R}^{n_{K}}$ of the form 
\begin{equation}\label{phi}
\Phi\left(x;u\right) \coloneqq A_K \sigma \circ A_{K-1}\sigma \circ \dots \sigma \circ A_1(x).
\end{equation}
Such NNs are composed  of  affine functions $A_{k}\colon {\mathbb R}^{n_{k-1}} \to {\mathbb R}^{n_{k}}$ given by
\begin{equation} \label{act1} 
A_{k}(x) \coloneqq L_{k} x + b_{k}, \qquad  k =1,\ldots, K, 
\end{equation}
with weight matrices $L_{k} \in {\mathbb R}^{n_{k}, n_{k-1}}$, $n_{0}=d$, bias vectors $b_{k} \in {\mathbb R}^{n_{k}}$ as well as a non-linear activation $\sigma\colon\mathbb{R} \rightarrow \mathbb{R}$ acting at each component, i.e., for ${x}= (x_{j})_{j=1}^{n}$ we have $\sigma (x) = (\sigma(x_{j}))_{j=1}^{n}$.
The parameter set $u\coloneqq\left(L_k,b_k\right)_{k=1}^{K}$ of such a NN has the overall dimension $D\coloneqq n_0 n_1 + n_1 n_2 + \dots + n_{K-1}n_K + n_1 + \dots + n_K$.
For an illustration see Fig.~\ref{fig:neuralnet}.

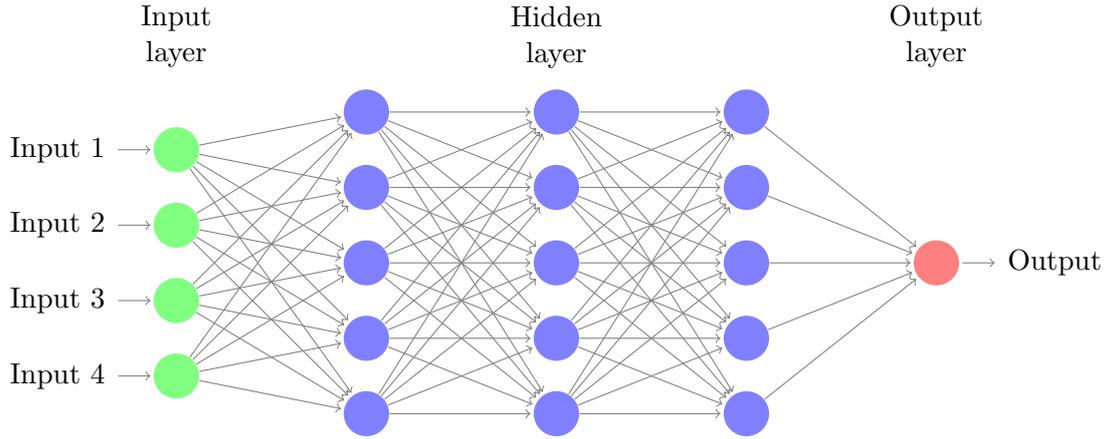
\begin{figure}[t]
\def\layersep{2.5cm}
\centering
\begin{tikzpicture}[shorten >=1pt,->,draw=black!50, node distance=\layersep]
    \tikzstyle{every pin edge}=[<-,shorten <=1pt]
    \tikzstyle{neuron}=[circle,fill=black!25,minimum size=17pt,inner sep=0pt]
    \tikzstyle{input neuron}=[neuron, fill=green!50];
    \tikzstyle{output neuron}=[neuron, fill=red!50];
    \tikzstyle{hidden neuron}=[neuron, fill=blue!50];
    \tikzstyle{annot} = [text width=4em, text centered]

    % Draw the input layer nodes
    \foreach \name / \y in {1,...,4}
    % This is the same as writing \foreach \name / \y in {1/1,2/2,3/3,4/4}
        \node[input neuron, pin=left:Input \y] (I-\name) at (0,-\y) {};

    % Draw the hidden layer nodes
    \foreach \name / \y in {1,...,5}
        \path[yshift=0.5cm]
            node[hidden neuron] (Ha-\name) at (\layersep,-\y cm) {};

	% Draw the hidden layer nodes
    \foreach \name / \y in {1,...,5}
        \path[yshift=0.5cm]
            node[hidden neuron] (Hb-\name) at (2*\layersep,-\y cm) {};
            
    % Draw the hidden layer nodes
    \foreach \name / \y in {1,...,5}
        \path[yshift=0.5cm]
            node[hidden neuron] (Hc-\name) at (3*\layersep,-\y cm) {};        

    % Draw the output layer node
    \node[output neuron,pin={[pin edge={->}]right:Output}, right of=Hc-3] (O) {};

    % Connect every node in the input layer with every node in the
    % hidden layer.
    \foreach \source in {1,...,4}
        \foreach \dest in {1,...,5}
            \path (I-\source) edge (Ha-\dest);
            
    % Connect every node in the hidden layer with every node in the
    % hidden layer.
    \foreach \source in {1,...,5}
        \foreach \dest in {1,...,5}
            \path (Ha-\source) edge (Hb-\dest);
            
    % Connect every node in the input layer with every node in the
    % hidden layer.
    \foreach \source in {1,...,5}
        \foreach \dest in {1,...,5}
            \path (Hb-\source) edge (Hc-\dest);

    % Connect every node in the hidden layer with the output layer
    \foreach \source in {1,...,5}
        \path (Hc-\source) edge (O);

    % Annotate the layers
    \node[annot,above of=Hb-1, node distance=1cm] (hm) {Hidden layer};
    \node[annot,above of=Ha-1, node distance=1cm] (hl) {};
    \node[annot,above of=Hc-1, node distance=1cm] (hr) {};
    \node[annot,left of=hl] {Input layer};
    \node[annot,right of=hr] {Output layer};
\end{tikzpicture}
\caption{Model of a NN with three hidden layers, i.e., $d=4$, $K=4$, $n_1=n_2=n_3=5, n_4=1$.}
\label{fig:neuralnet}
\end{figure}

In \cite{CP2018}, the notation of stable activation functions was introduced.
An activation function $\sigma\colon \mathbb R \rightarrow \mathbb R$ is called \emph{stable} if it is monotone increasing, $1$-Lipschitz continuous and satisfies  $\sigma(0) = 0$.
The following result was shown in \cite{CP2018}.

\begin{lemma}\label{lem:activation}
A function $\sigma\colon\mathbb{R} \rightarrow \mathbb{R}$ is a stable activation function
if and only if there exists $g \in \Gamma_0(\mathbb R)$ having $0$ as a minimizer
such that $\sigma = \prox_{g}$.
\end{lemma}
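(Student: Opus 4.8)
The plan is to prove both directions of the equivalence by directly relating the three properties of a stable activation function to the characterization of one-dimensional proximity operators provided by Moreau's theorem, Theorem~\ref{thm:1}, specialized to $\HH = \R$.

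First I would establish the forward direction. Assume $\sigma$ is a stable activation function, so it is monotone increasing, $1$-Lipschitz, and satisfies $\sigma(0)=0$. The goal is to produce $g \in \Gamma_0(\R)$ with $0$ as a minimizer such that $\sigma = \prox_g$. Monotonicity of $\sigma$ means that $\sigma$ is, in particular, a monotone operator on $\R$, and $1$-Lipschitz continuity gives nonexpansiveness. By Theorem~\ref{thm:1} it suffices to exhibit a convex function $\Phi \in \Gamma_0(\R)$ with $\sigma(x) \in \partial\Phi(x)$; equivalently, since we are in one dimension and $\sigma$ is continuous, I would simply set $\Phi(x) \coloneqq \int_0^x \sigma(t)\dx t$. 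Because $\sigma$ is monotone increasing, $\Phi$ is convex and differentiable with $\Phi' = \sigma$, so $\sigma = \nabla\Phi$, and by the discussion following \eqref{proxi} together with nonexpansiveness this makes $\sigma$ a proximity operator, say $\sigma = \prox_g$ for some $g \in \Gamma_0(\R)$. The remaining point is that $0$ is a minimizer of $g$: this follows from $\sigma(0)=0$, since $\prox_g(0)=0$ is exactly the fixed-point/optimality condition $0 \in \partial g(0)$, characterizing $0$ as a minimizer of $g$.

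For the converse direction, I would assume $\sigma = \prox_g$ for some $g \in \Gamma_0(\R)$ having $0$ as a minimizer, and verify the three defining properties. Nonexpansiveness (hence $1$-Lipschitz continuity) is immediate from Theorem~\ref{lem:1}i), which asserts that $\prox_g$ is firmly nonexpansive and thus nonexpansive. Monotonicity follows because firm nonexpansiveness \eqref{star_ap} on $\R$ gives $(\sigma(x)-\sigma(y))^2 \le (x-y)(\sigma(x)-\sigma(y))$, which forces $(x-y)(\sigma(x)-\sigma(y)) \ge 0$, i.e.\ $\sigma$ is monotone increasing. Finally, $\sigma(0)=0$ follows from the optimality condition for $\prox_g(0)$: since $0$ minimizes $g$, we have $0 \in \partial g(0)$, equivalently $0 = \prox_g(0) = \sigma(0)$.

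The main obstacle, and the step requiring the most care, is the precise equivalence between ``$0$ is a minimizer of $g$'' and ``$\sigma(0)=0$'' in both directions, since this is where the normalization condition $\sigma(0)=0$ is pinned down rather than the generic proximity structure. Concretely, one must use that $\prox_g(0)=0$ holds if and only if $0 \in \partial g(0)$, which by Fermat's rule is exactly the statement that $0$ minimizes $g$. The other potential subtlety is ensuring that the constructed potential $\Phi$ in the forward direction indeed lies in $\Gamma_0(\R)$ and is finite-valued; this is harmless here because $\sigma$ is continuous and bounded in slope by $1$, so $\Phi$ is a finite convex function on all of $\R$, and the corresponding $g$ can be recovered via the Moreau--Fenchel machinery underlying Theorem~\ref{thm:1}.
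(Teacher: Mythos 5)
Your proof is correct, and it is worth noting that the paper itself contains no proof of Lemma~\ref{lem:activation} at all: the result is simply quoted from \cite{CP2018}, so there is no in-paper argument to compare against line by line. Your reconstruction follows what is essentially the standard route (and the one underlying the cited reference), namely Moreau's characterization in Theorem~\ref{thm:1}. The two pivotal steps are both handled soundly. First, in the forward direction, the one-dimensional primitive $\Phi(x) \coloneqq \int_0^x \sigma(t)\dx t$ is finite everywhere (since $|\sigma(t)| = |\sigma(t)-\sigma(0)| \le |t|$ by the Lipschitz bound and $\sigma(0)=0$), convex because $\sigma$ is nondecreasing, and differentiable with $\Phi' = \sigma$, so together with nonexpansiveness Theorem~\ref{thm:1} yields $\sigma = \prox_g$ for some $g \in \Gamma_0(\R)$; this is exactly the point where one dimension makes the potential available by direct integration. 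Second, the normalization is correctly pinned down through the optimality condition $p = \prox_g(x) \Leftrightarrow x - p \in \partial g(p)$: taking $x=0$ shows $\prox_g(0)=0 \Leftrightarrow 0 \in \partial g(0)$, which by Fermat's rule is precisely the statement that $0$ minimizes $g$, and this equivalence serves both directions. Your converse is likewise complete: firm nonexpansiveness of $\prox_g$ (Theorem~\ref{lem:1}i)) gives $1$-Lipschitz continuity, and the scalar inequality $(\sigma(x)-\sigma(y))^2 \le (x-y)(\sigma(x)-\sigma(y))$ forces $(x-y)(\sigma(x)-\sigma(y)) \ge 0$, i.e.\ monotonicity in the nondecreasing sense — which is the correct reading of ``monotone increasing'' here, since proximity operators such as soft shrinkage are constant on intervals.
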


Various common activation functions $\sigma$ and corresponding functions $g \in \Gamma_0(\mathbb R)$
are listed in Tab.~\ref{prox:act} in the appendix.
For $T_k \in \mathbb R^{n_k,d}$, we consider the norm \eqref{new_norm} and denote it by
\begin{equation} \label{norm_dis}
\|x\|_{T_{k}} \coloneqq \bigl( \|T_{k} x\|_2^2/\Vert T_{k} \Vert_2^2 + \|(I - T_{k}^\dagger T_{k})x\|_2^2 \bigr)^\frac12, \qquad x \in \mathbb R^d.
\end{equation}

In the previous sections, we have considered two different kinds of proximity operators, namely $\prox_g$ with respect to the Euclidean norm
\begin{equation} \label{prox_euclid}
\prox_{g}= \argmin_{y \in \mathbb R^d} \bigl\{ \tfrac12 \|x-y\|_{2}^2 + g(y) \bigr\},
\end{equation}
and $\prox_{T_k,g}$ with respect to the norm \eqref{norm_dis}
\begin{equation} \label{prox_T}
\prox_{T_k,g}= \argmin_{y \in \mathbb R^d} \bigl\{ \tfrac12 \|x-y\|_{T_k}^2 + g(y) \bigr\}.
\end{equation}
Further, we derived a function $f_k$ depending on $g$, $T_k$ and $b_k$, see Theorem \ref{thm:ExpForm}, such that
\begin{align} \label{multiprox}
\prox_{T_k,f_k}(x) =
\argmin_{y \in \mathbb R^d} \left\{ \tfrac12 \|x-y\|_{T_k}^2 + f_k(y) \right\} = T_k^\dagger \prox_g (T_k x + b_k).
\end{align}

Based on  our observations in  the previous sections, we consider the following special NNs.
We  choose a stable activation function $\sigma = \prox_g$ for some  $g \in \Gamma_0(\mathbb R)$
and  matrices  $T_{k} \in {\mathbb R}^{n_{k} , d}$, 
as well as bias vectors $b_{k} \in {\mathbb R}^{n_{k}}$, $k=1, \ldots , K$, and construct according to \eqref{act1} the affine  mappings
\begin{align} \label{act2}
A_{k}(x) \coloneqq  \underbrace{T_{k} T_{k-1}^{\dagger}}_{L_k} (x) + b_{k} , \qquad k=1,\ldots,K.
\end{align}
Then, the NN $\Phi\colon\mathbb{R}^d\rightarrow\mathbb{R}^{n_K}$ in \eqref{phi} with $A_{k}$, $b_{k}$  in \eqref{act2} can be rewritten as
\begin{align}
\Phi\left(x;u\right)
&= T_K \, T_{K-1}^\dagger \sigma \bigl( T_{K-1} \ldots T_2^\dagger \sigma \bigl( T_2 T_{1}^\dagger \sigma(T_1 x + b_1) + b_2 \bigr)  \ldots \bigr)+ b_K\\
&= T_K\, \prox_{T_{K-1},f_{K-1}} \circ  \ldots \circ \prox_{T_{1},f_1} (x) + b_K.\label{PNN_1}
\end{align}
We call $\Phi$ a  
\emph{proximal neural network} (PNN) with network parameters $u \coloneqq (T_k,b_k)_{k=1}^{K}$. 
\medskip

Next, we investigate stability properties of such networks.
Recall that an operator $\Psi\colon \HH \rightarrow \HH$ on a Hilbert space $\HH$ 
is \emph{$\alpha$-averaged}, $\alpha \in (0,1)$, if there exists a nonexpansive operator
$R\colon \HH \rightarrow \HH$
such that 
\[\Psi = \alpha R + (1-\alpha) I_{\HH}.\]
The following theorem summarizes properties of $\alpha$-averaged operators, c.f.~\cite{BC11} and  \cite{Reich1979}
for the third statement.

\begin{thm} \label{lem:averaged}
Let $\HH$ be a separable real Hilbert space. Then the following holds true:
\begin{itemize}
\item[i)] 
An operator on $\HH$ is firmly nonexpansive if and only if it is $\frac12$-averaged.
\item[ii)]
The concatenation of $K$ operators which are $\alpha_k$-averaged with respect to the same norm is $\alpha$-averaged 
with 
$
\alpha = \frac{K}{K-1 + 1/\max_k \alpha_k}
$.
\item[iii)]
For an $\alpha$-averaged operator $\Psi\colon \HH \rightarrow \HH$ with a nonempty fixed point set,
the sequence generated by the iteration
\[
x^{(r+1)} = \Psi\bigl(x^{(r)} \bigr)
\]
converges weakly for every starting point $x^{(0)} \in \HH$ to a fixed point of $\Psi$.
\end{itemize}
\end{thm}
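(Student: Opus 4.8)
All three statements are classical, so the plan is to assemble standard arguments rather than build new machinery; the genuine work sits in parts~ii) and~iii). For~i) I would argue directly from the definition of averagedness. Given a firmly nonexpansive $A$, set $R \coloneqq 2A - I_\HH$, so that $A = \tfrac12 R + \tfrac12 I_\HH$, and check that $R$ is nonexpansive by expanding
\[
\|Rx - Ry\|^2 = 4\|Ax-Ay\|^2 - 4\langle x-y, Ax-Ay\rangle + \|x-y\|^2
\]
and bounding the first term via the firm nonexpansiveness \eqref{star_ap}. Reading the same identity backwards gives the converse. This step is routine.

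For~ii) the key tool is the equivalent characterization that $\Psi$ is $\alpha$-averaged if and only if
\[
\|\Psi x - \Psi y\|^2 \le \|x-y\|^2 - \tfrac{1-\alpha}{\alpha}\,\|(I_\HH - \Psi)x - (I_\HH-\Psi)y\|^2
\]
for all $x,y \in \HH$; substituting $\Psi = (1-\alpha)I_\HH + \alpha R$ shows this inequality is precisely nonexpansiveness of $R$. I would also record that an $\alpha$-averaged operator is $\alpha'$-averaged for every $\alpha' \in [\alpha,1)$, since the corresponding residual operator is then a convex combination of $R$ and $I_\HH$, hence again nonexpansive. This monotonicity lets me replace every $\alpha_k$ by $\bar\alpha \coloneqq \max_k \alpha_k$ at no cost, reducing the claim to a common averaging constant. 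I would then apply the displayed inequality to each factor of a two-fold composition, chain the estimates, and collect terms to recover an inequality of the same form, reading off its averaging constant; an induction on $K$ together with a short computation yields the value $\frac{K}{K-1+1/\bar\alpha}$. The fiddly bookkeeping in the chaining step is the only obstacle here, and it is mechanical.

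For~iii) I would run the standard Krasnosel'skii--Mann argument. Fixing a point $\hat x$ in the fixed point set of $\Psi$ and inserting $y = \hat x$ into the inequality above (using $(I_\HH-\Psi)\hat x = 0$) gives
\[
\|x^{(r+1)} - \hat x\|^2 \le \|x^{(r)} - \hat x\|^2 - \tfrac{1-\alpha}{\alpha}\,\|x^{(r)} - x^{(r+1)}\|^2,
\]
so $\{x^{(r)}\}$ is Fej\'er monotone with respect to the fixed point set and, by telescoping, $x^{(r)} - x^{(r+1)} \to 0$, that is $x^{(r)} - R x^{(r)} \to 0$ with $R = \tfrac1\alpha\bigl(\Psi - (1-\alpha)I_\HH\bigr)$. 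Boundedness of $\{x^{(r)}\}$ together with separability of $\HH$ produces a weakly convergent subsequence, and the demiclosedness of $I_\HH - R$ at $0$ identifies its weak limit as a fixed point of $\Psi$. Opial's lemma then upgrades this to weak convergence of the entire sequence. The main obstacle is exactly the demiclosedness principle and Opial's lemma; these are the nontrivial functional-analytic inputs, but both are standard and may simply be cited.
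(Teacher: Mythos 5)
Your proposal is correct and takes essentially the same route as the paper, which offers no proof of its own but simply cites \cite{BC11} and \cite{Reich1979}: part i) is the standard identity for $R=2A-I_\HH$, part ii) is the usual chaining of the inequality characterization of averagedness (the convexity bound $\bigl\|\sum_{k=1}^K u_k\bigr\|^2\le K\sum_{k=1}^K\|u_k\|^2$ applied to the telescoped residuals yields exactly $\alpha=\frac{K}{K-1+1/\max_k\alpha_k}$), and part iii) is the Krasnosel'skii--Mann/Fej\'er argument with demiclosedness and Opial, precisely as in the cited references. One cosmetic remark: separability of $\HH$ is not needed to extract the weakly convergent subsequence in iii) --- boundedness together with reflexivity suffices --- so that hypothesis is harmless but inessential to your argument.
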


In the following, we study special PNNs, which are $\alpha$-averaged operators 
such that  $x^{(r+1)} = \Phi(x^{(r)};u)$ converges to a fixed point of $\Phi$ if such a point exists.

\begin{lemma}\label{lem:special}
\begin{itemize}
\item[i)] Let
$T_k\in \mathbb R^{n_k,d}$ fulfill
$T_k^* T_k = \Vert T_k \Vert^2 I_{d}$ or $T_k T_k^* = \Vert T_k \Vert^2 I_{n_{k}}$ for all $k=1,\ldots, K-1$ and let $T_K = I_d$.
Then $\Phi$ in \eqref{PNN_1} is $\alpha$-averaged with $\alpha = \frac{K-1}{K}$.
\item[ii)]
Let $T_1 \in \mathbb R^{n_k,d}$ with full column rank fulfill $\Vert T_1\Vert^2 T_k^* T_k =\Vert T_k\Vert^2 T_1^* T_1$ for $k=1,\ldots,K-1$ and  $T_K = I_d$.
Then $\Phi$ in \eqref{PNN_1} is $\alpha$-averaged with $\alpha = \frac{K-1}{K}$.
\end{itemize}
\end{lemma}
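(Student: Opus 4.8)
The plan is to present both parts as direct applications of Theorem~\ref{lem:averaged}. By \eqref{multiprox}, each building block $\prox_{T_k,f_k} = T_k^\dagger \, \prox_g(T_k \cdot + b_k)$ is, thanks to Theorem~\ref{thm:Existence}, a proximity operator on $\HT$ with $\HH = \R^d$ and norm $\|\cdot\|_{T_k}$ from \eqref{norm_dis}; hence by Theorem~\ref{lem:1}~i) it is firmly nonexpansive, and by Theorem~\ref{lem:averaged}~i) it is $\tfrac12$-averaged \emph{with respect to} $\|\cdot\|_{T_k}$. Since $T_K = I_d$, the network \eqref{PNN_1} is just the concatenation $\prox_{T_{K-1},f_{K-1}} \circ \dots \circ \prox_{T_1,f_1}$ followed by the translation $x \mapsto x + b_K$. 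If all $K-1$ factors were $\tfrac12$-averaged with respect to one fixed norm, then Theorem~\ref{lem:averaged}~ii), applied to $K-1$ operators with $\max_k \alpha_k = \tfrac12$, so $1/\max_k\alpha_k = 2$, would immediately give
\[
\alpha = \frac{K-1}{(K-1)-1 + 2} = \frac{K-1}{K}.
\]
It would then remain only to note that postcomposing an $\alpha$-averaged operator with a translation preserves $\alpha$-averagedness: writing $\Psi = \alpha R + (1-\alpha)\,\mathrm{Id}$ with $R$ nonexpansive, one has $\Psi + b_K = \alpha(R + b_K/\alpha) + (1-\alpha)\,\mathrm{Id}$, and $R + b_K/\alpha$ is again nonexpansive.

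The crux, and the only genuine obstacle, is the clause ``with respect to the same norm'' in Theorem~\ref{lem:averaged}~ii): the factors are averaged with respect to the \emph{different} norms $\|\cdot\|_{T_k}$, so I must show that the hypotheses force these norms to coincide. For part~i) this is exactly what Lemma~\ref{lem:tight} provides: each assumption $T_k^* T_k = \|T_k\|^2 I_d$ or $T_k T_k^* = \|T_k\|^2 I_{n_k}$ yields $\HH = \HH_{T_k}$, i.e. $\|\cdot\|_{T_k} = \|\cdot\|_2$ for $k = 1,\dots,K-1$. All factors are then $\tfrac12$-averaged with respect to the common Euclidean norm, and the computation above applies verbatim, so $\Phi$ is $\tfrac{K-1}{K}$-averaged.

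For part~ii) I would first rewrite the hypothesis $\|T_1\|^2 T_k^* T_k = \|T_k\|^2 T_1^* T_1$ as $T_k^* T_k/\|T_k\|^2 = T_1^* T_1/\|T_1\|^2$, i.e. the normalized frame operators all agree. Since $T_1$ has full column rank, $T_1^* T_1$ is invertible, hence so is each $T_k^* T_k$, so every $T_k$ is injective and $I - T_k^\dagger T_k = P_{\mathcal N(T_k)} = 0$. Inserting this into \eqref{norm_dis} gives, for all $x \in \R^d$ and all $k = 1,\dots,K-1$,
\[
\|x\|_{T_k}^2 = \langle x, T_k^* T_k x\rangle/\|T_k\|^2 = \langle x, T_1^* T_1 x\rangle/\|T_1\|^2 = \|x\|_{T_1}^2.
\]
Thus all factors are $\tfrac12$-averaged with respect to the single norm $\|\cdot\|_{T_1}$, and Theorem~\ref{lem:averaged}~ii) together with the translation argument shows that $\Phi$ is $\tfrac{K-1}{K}$-averaged, now with respect to $\|\cdot\|_{T_1}$. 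Beyond these two identifications of norms, no hard estimate is required.
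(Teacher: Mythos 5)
Your proposal is correct and follows essentially the same route as the paper's proof: identify all norms $\|\cdot\|_{T_k}$ with a single norm (via Lemma~\ref{lem:tight} in part~i), via the normalized frame operator identity in part~ii)), then apply Theorem~\ref{lem:averaged} to the composition of $K-1$ firmly nonexpansive, hence $\tfrac12$-averaged, proximity operators. You are in fact slightly more careful than the paper, which silently omits both the injectivity of the $T_k$ in part~ii) (needed so the projection term in \eqref{norm_dis} vanishes) and the observation that the final translation by $b_K$ preserves $\alpha$-averagedness.
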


\begin{proof}
i) By Lemma~\ref{lem:tight}, we know that $\|\cdot\|_{T_k} = \|\cdot\|_2$ so that $\Phi$ is the concatenation of $K-1$ proximity
operators on $\mathbb R^d$ with respect to the Euclidean norm. More precisely,
$$
\Phi = T_K\,\prox_{f_{K-1}} \circ  \ldots \circ \prox_{f_1} (x) + b_K
$$
with $f_k$ as in Theorem \ref{thm:ExpForm}.
Now, the assertion follows from Theorem \ref{lem:averaged}.

ii) By assumption, we obtain 
$\|x\|_{T_k} = x^* T_k^* T_k x /\Vert T_k\Vert^2 = x^* T_1^* T_1 x/\Vert T_1\Vert^2  = \|x\|_{T_1}$.
Hence, $\Phi$ becomes the concatenation of $K-1$ proximity operators on $\mathbb R^d$ all with respect to the $\Vert \cdot \Vert_{T_1}$ norm.
Again, the assertion follows from Theorem \ref{lem:averaged}.
\end{proof}

\begin{remark} 
Lemma \ref{lem:special} i) can be generalized to the case where $T_K\in\R^{d,d}$ is a symmetric positive semi-definite matrix 
with norm not larger than 1.  In this case,  $T_K$ can be written in the form $T_K=Q^*Q$ 
for some $Q\in\R^{d,d}$ with $\|Q\|^2=\|Q^*Q\| =\|T_K\|\leq 1$. Thus, for every $x,y\in \R^d$,  
\begin{align}
 \| T_Kx +b_K -(T_Ky+b_K)\|^2 &= \| Q^*Q(x-y)\|^2 \leq \| Q(x-y)\|^2 \\
 &=\langle Q(x-y), Q(x-y)\rangle\\
  &= \langle x-y, T_Kx+b_K - (T_Ky+b_K) \rangle.  
 \end{align}
This shows that $T_K\cdot+b_K$ is firmly nonexpansive and therefore $\frac12$-averaged. 
Consequently, $\Phi$ in \eqref{PNN_1} is the concatenation of $K$ $\frac12$-averaged operators
with respect to the Euclidean norm.
Hence, $\Phi$ is itself $\alpha$-averaged with $\alpha = \frac{K}{K+1}$.
\end{remark}

\begin{remark} In \cite{CP2018}, the following NN structure was studied:
Let $\HH_0, \ldots,\HH_K$ be a sequence of real Hilbert spaces and $\HH_0 = \HH_K = \HH$.
Further, let $W_k \in {\mathcal B} (\HH_{k-1},\HH_k)$ and $P_k\colon \HH_k \rightarrow \HH_k$, $k=1,\ldots,K$
be firmly nonexpansive operators.
For this case, Combettes and Pesquet \cite{CP2018} have posed conditions on $W_k$ such that
\begin{equation} \label{CP_mit}
\Psi \coloneqq W_K \circ P_{K-1} \circ W_{K-1} \circ \ldots \circ W_2 \circ P_1 \circ W_1 
\end{equation}
is $\alpha$-averaged for some $\alpha \in (1/2,1)$.
For $\HH = \mathbb R^d$ equipped with the Euclidean norm, $\HH_k = \mathbb R^d$ equipped with the norm~\eqref{norm_dis} 
and $T_K = I_d$, our PNN $\Phi$ has exactly the form~\eqref{CP_mit} with $P_k \coloneqq \prox_{T_k,f_k}\colon \HH_k  \rightarrow \HH_k$ 
and the embedding operators $W_k\colon \HH_{k-1} \hookrightarrow \HH_k$, $k=1,\ldots,K$.
%without any bias, i.e., $b_k = 0$ for all $k=1,\ldots,K$,  
For the special PNNs in Lemma \ref{lem:special} it holds $W_k = I_d$, such that the conditions in \cite{CP2018} are fulfilled.
\end{remark}
\hspace{0.5cm}

In the rest of this paper, we restrict our attention to matrices
$T_k\colon \mathbb R^{d} \rightarrow \mathbb R^{n_k}$ fulfilling
\begin{equation} \label{bed:stiefel}
T_k^* T_k = I_{d} \quad \mathrm{or} \quad T_k T_k^* = I_{n_{k} }, \quad k=1,\ldots, K-1,
\end{equation}
and $T_{K} = I_{d}$, i.e., the rows, resp.~columns of $T_k$ form a Parseval frame.
Then, the  PNN in \eqref{PNN_1} has the form 
\begin{equation} \label{PPNN}
\hspace*{-3cm} \mathrm{(PPNN)}  \qquad \qquad 
\Phi\left(x;u\right) = T_K \circ \prox_{f_{K-1}} \circ  \ldots \circ \prox_{f_1} (x) + b_K,
\end{equation} 
with the ``usual'' proximity operator, cf.~\eqref{prox_euclid}, and
\begin{align}
f_k(x) &=  g  \square \bigl( \tfrac12 \| \cdot \|_2^2 + \iota_{\mathcal{N}(T_k^*)}  \bigr)  (T_k x + b_k) \quad \mbox{if} \; d \le n_k,\\
f_k(x) &=  g(T_k x + b_k) + \iota_{\mathcal{R}(T_k^*)}(x) \quad \mbox{if} \;  d \ge n_k.
\end{align}
Due to the use of Parseval frames, we call these networks \emph{Parseval (frame) proximal neural networks} (PPNNs).
By our previous considerations, see Lemma \ref{lem:special}, PPNNs are averaged operators.

\begin{remark}\label{rem:cpp}
An interesting result follows from convergence considerations of the cyclic proximal point algorithm, see \cite{Bertsekas2011}.
Let $\{\lambda_r\}_{r \in \mathbb N} \in \ell_2 \setminus \ell_1$.
Then, for every $x^{(0)} \in \mathbb R^d$, the sequence generated by
\begin{equation} \label{cpp2}
x^{(r+1)} \coloneqq \prox_{\lambda_r f_{K-1}} \circ  \ldots \circ  \prox_{\lambda_r f_1} \bigl(x^{(r)}\bigr)
\end{equation}
converges to a minimizer of
$f_1 + \ldots + f_{K-1}$.
In particular, Theorem \ref{thm:ExpForm} implies that  for orthogonal matrices $T_k$, $k=1,\ldots,K-1$ and $T_K = I_d$, $b_K = 0$, 
the sequence $\{x^{(r)}\}_{r \in \mathbb N}$ in \eqref{cpp2} converges to 
\[
\hat x \in  \argmin_x \sum _{k=1}^{K-1} g(T_k x - b_k).
\]
\end{remark}

%----------------------------------------------
\section{Training PPNNs on Stiefel Manifolds} \label{sec:PPNN}
%----------------------------------------------
%
In this section, we show how to train PPNNs. 

\begin{remark}
According to Lemma~\ref{lem:special}~i), 
we could add more flexibility to our model 
by allowing tight frames instead.
Then, we must train an additional scaling constant, which
does not introduce difficulties in the training process 
and may be useful for special applications, see \cite{ALG19}.
In our numerical experiments, we omitted the additional scaling constant as we do not want to focus on this particular issue.
\end{remark}

In PPNNs, we assume that either $T_k$ or $T_k^*$, $k=1,\ldots,K-1$, is an element
of the Stiefel manifold
$$\St \big( \min(n_k,d ), \max(n_k,d)\big), \quad k=1,\ldots, K-1.$$
The following facts on Stiefel manifolds can be found, e.g., in \cite{AMS08}.
For $d\le n$, the (compact) Stiefel manifold is defined as
$$\St(d,n) \coloneqq \bigl\{ T \in \mathbb R ^{n,d} : T^* T = I_d\bigr\}.$$
For $d=1$ this reduces to the sphere $\mathbb S^{n-1}$, and for $d=n$ we obtain the special orthogonal group $\mathrm{SO}(n)$.
In general, $\St(d,n)$ is a manifold of dimension $nd -\frac12 d(d+1)$ with tangential space at $T \in \St(d,n)$ given by
$$
{\mathcal T}_T \mathrm{St}(d,n) =\bigl\{T U + T_\perp V: U^* = -U, V\in\R^{n-d,d}\bigr\},
$$
where the columns of $T_{\perp}\in\R^{n,n-d}$ are the basis of an orthonormal complement of $T$ fulfilling $T^*_{\perp} T_{\perp} = I_{n-d}$ and $T^*T_{\perp}=0$.
The Riemannian gradient of a function on $\St(d,n)$ can be obtained by the orthogonal projection of
the gradient in $\mathbb R^{n,d}$ onto $\St(d,n)$.
The orthogonal projection of $X \in\R^{n,d}$ onto ${\mathcal T}_T \mathrm{St}(d,n)$ is given by
\begin{align} \label{proj_stiefel_1}
P_T X 
&= (I_n - T T^*) X + \tfrac12 T(T^* X - X^* T),\\
&= WT, \qquad W \coloneqq \hat W-\hat W^*,\quad \hat W \coloneqq XT^*-\tfrac12 T(T^*XT^*). \label{proj_stiefel_2}
\end{align}
To emphasize that for fixed $T$ the matrix $W$ depends on $X$, we will also write $W_X$.
A  \emph{retraction} $\mathcal{R}$ on the manifold $\St(d,n)$ 
is a smooth mapping from the tangent bundle of $\St(d,n)$ to the manifold 
fulfilling 
${\mathcal R}_{T}(0) = T$, where $0$ is the zero element in ${\mathcal T}_T \St(d,n)$, and
with the identification ${\mathcal T}_0({\mathcal T}_{T} \St(d,n)) \cong {\mathcal T}_T \St(d,n)$ 
the local rigidity condition
$
D {\mathcal R}_{T} (0) = \mathrm{Id}_{ {\mathcal T}_T \St(d,n)}
$
holds true.
A well-known retraction on $\St(d,n)$ is
\begin{equation} \label{retraction}
\tilde {\mathcal R}_T(X)=\mathrm{qf}(T+X), \quad X \in {\mathcal T}_T\mathrm{St}(d,n),
\end{equation}
where $\mathrm{qf}(A)$ denotes the $Q$ factor of the decomposition of
a matrix $A\in\R^{n,d}$ with linearly independent columns as $A=QR$ with $Q\in \mathrm{St}(d,n)$ and $R$ 
an upper triangular matrix of size $d \times d$ with strictly positive diagonal elements.
The complexity of the QR decomposition using the Householder algorithm is $2d^2(n-d/3)$, see \cite{GL2013}.
Since the computation of the QR decomposition appears to be time consuming on a GPU, we prefer to apply another retraction, 
based on the Cayley transform  of skew-symmetric matrices $W$ in \eqref{proj_stiefel_2}, namely
\begin{equation} \label{retraction_2}
{\mathcal R}_T(X)=(I_n-\tfrac12 W)^{-1}(I_n +\tfrac12 W)T, \quad X \in {\mathcal T}_T\mathrm{St}(d,n),
\end{equation}
see \cite{NA2005,WY2013}.
By straightforward computation it can be seen that $W_X$ and $W_{P_T X}$ coincide, so that the retraction \eqref{retraction_2} enlarged to the whole $\mathbb R^{n,d}$ fulfills
\begin{equation}\label{retraction-projection}
 {\mathcal R}_T (X) = {\mathcal R}_T (P_T X), \qquad X \in \mathbb R^{n,d}.
\end{equation}

\begin{remark}\label{iteration_retraction}
The retraction \eqref{retraction_2} has the drawback that it contains a matrix inversion.
In our numerical algorithm, the following simple fixed point iteration is used for computing the matrix $R = {\mathcal R}_T(X)$ with fixed $T$ and $X$.
By definition, $R$ fulfills the fixed point equation
\begin{equation} \label{retraction_3}
R = \tfrac12 W R + (I_n + \tfrac12 W) T.
\end{equation}
Starting with an arbitrary $R^{(0)} \in \St(d,n)$, we apply the iteration
\[
R^{(r+1)} \coloneqq \tfrac12 W R^{(r)} + (I_n + \tfrac12 W) T,
\]
which converges by Banach's fixed point theorem to the fixed point of \eqref{retraction_3} if $\frac12 \rho(W) < 1$,
where $\rho(W)$ denotes the spectral radius of $W$.
\end{remark}

%----------------------------------------------------------------------------------------
We want to train a PPNN by minimizing  
\begin{equation} \label{functional}
\mathcal J(u) \coloneqq  \sum_{i=1}^N {\ell} \bigl(\Phi(x_i;u); y_i\bigr),
\end{equation}
where ${\ell}\colon \mathbb R^d \times \mathbb R^d \rightarrow \mathbb R$ is a differentiable loss function on the first $d$ variables.

\begin{example}
Let us specify two special cases of PPNNs with one layer.
\begin{itemize}
 \item[i)]
For one layer without bias and componentwise soft shrinkage $\sigma$ as activation function, i.e., summands
\[
\sum_{i=1}^N {\ell} \bigl(T_1^*  \sigma(T_1 x_i);y_i \bigr), \quad T_1 \in \St(d,n_1), 
\]
we  learn Parseval frames, e.g., for denoising tasks with $y_i$ as a noisy version of $x_i$. 
Here, we want to mention the significant amount of work on dictionary learning, see \cite{EA2006}, which starts with the same goal.
\item[ii)]
For $x_i = y_i$, $i=1,\ldots,N$, the above network could be used as so-called auto-encoder.
Again, for one layer without activation function, $b_1 = 0$ and ${\ell} = h(\|x-y\|)$ 
with some norm $\| \cdot\|$ on $\mathbb R^d$ we get
\[
\sum_{i=1}^N  {\ell} \left(T_1 ^* T_1 x_i ;x_i \right) = \sum_{i=1}^N  h\bigl( \| (I_d - T_1 ^* T_1) x_i \| \bigr), \quad T_1^* \in \St(d,n_1).
\]
For the Euclidean norm and $h(x) = x^2$ we get the classical PCA approach and for $h(x) = x$
the robust rotationally invariant $L_1$-norm PCA, recently discussed in \cite{LM2018,NNSS2019}.
\end{itemize}
\end{example}

The following remark points out that special cases of our PPNNs were already considered in the literature.

\begin{remark} \label{OMDSM}
In \cite{huang2018orthogonal}, NNs with weight matrices
$L_k \in \mathbb R^{n_k,n_{k-1}}$, $k \in \{1,\ldots,K-1\}$, (or their transpose) lying in a Stiefel manifold were examined.
The authors called this approach optimization over multiple dependent Stiefel manifolds (OMDSM).
Indeed, by the following reasons, these NNs are special cases of our PPNNs if $n_k \leq d$ for all $k=1,\ldots,K-1$.
In parti\-cular, this implies that the NNs considered in \cite{huang2018orthogonal} 
(with appropriately chosen last layer) are averaged operators.
\\[1ex]
i) Case $n_{k} \le n_{k-1}$: Let  $L_k^* \in \St(n_k, n_{k-1})$, i.e.,
$L_k L_k^* = I_{n_k}$. 
Choosing an arbitrary fixed $T_{k-1} \in \mathbb R^{n_{k-1},d}$ with $T_{k-1} T_{k-1}^* = I_{n_{k-1}}$,
we want to find $T_{k} \in \mathbb R^{n_{k},d}$  such that
\begin{equation} \label{choose}
T_{k} T_{k}^* = I_{n_k} \quad \mathrm{and} \quad L_k = T_k T_{k-1}^*.
\end{equation}
It is straightforward to verify that $T_k \coloneqq L_k T_{k-1}$ has the desired properties.

Note that if the transposes of $T_k$ and $T_{k-1}$ are in a Stiefel manifold, this does not necessarily hold for the transpose of $T_k T_{k-1}^*$.
Therefore, our PPNNs are more general.
\\[1ex]
ii) Case $n_{k-1}< n_k$: Let $L_k \in \St(n_{k-1},n_k)$,
 i.e., $L_k^* L_k = I_{n_{k-1}}$. 
For an arbitrary fixed $T_{k-1} \in \mathbb R^{n_{k-1},d}$ with $T_{k-1} T_{k-1}^* = I_{n_{k-1}}$,
we want to find $T_{k} \in \mathbb R^{n_{k},d}$ fulfilling~\eqref{choose}.
To this end, we complete $L_k$ to an orthogonal matrix $\tilde L_k \in \mathbb{R}^{n_k,n_k}$ and $T_{k-1}$ to a matrix $\tilde{T}_{k-1}\in \mathbb{R}^{n_k,d}$ with orthogonal rows.
By straightforward computation we verify that $T_k \coloneqq \tilde L_k \tilde T_{k-1}$ satisfies \eqref{choose} such that this case also fits into our PPNN framework.
\end{remark}

We apply a stochastic gradient descent algorithm on the Stiefel manifold to find a minimizer of \eqref{functional}.
To this end, we compute the Euclidean gradient with respect to one layer and apply the usual backpropagation for multiple layers.

\begin{lemma} \label{grad}
Let 
\[J(T,b) \coloneqq \ell \bigl(T^* \sigma(Tx+b);y\bigr),\]
where $T$ or $T^*$ are in $\St(d,n)$, and $\ell$ and $\sigma$ are differentiable.
Set
\[r \coloneqq \sigma(Tx+b),\quad s \coloneqq T^*\sigma(Tx+b), \quad t \coloneqq  \nabla \ell \bigl(T^* \sigma(Tx+b);y\bigr), 
\quad \Sigma \coloneqq \mathrm{diag}\bigl(\sigma'(Tx + b) \bigr),\]
where the gradient of $\ell$ is taken with respect to the first $d$ variables.
Then it holds for the Euclidean gradient 
 \begin{align*}
  \nabla_T J(T,b) &= -T (t s^* + s t^*) + r t^* + \Sigma \, T t x^*,\qquad
  \nabla_b J(T,b) = \Sigma \, T t.
 \end{align*}
\end{lemma}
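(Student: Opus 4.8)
The plan is to read off both gradients directly from the first-order expansion of $J$, using the chain rule together with the Euclidean (Frobenius) inner product $\langle A,B\rangle_F \coloneqq \mathrm{tr}(A^* B)$ on $\mathbb R^{n,d}$ and the standard inner product on $\mathbb R^n$. Throughout I abbreviate $z \coloneqq Tx+b$, so that $r = \sigma(z)$, $s = T^* r$ and $J = \ell(s;y)$ with $t = \nabla\ell(s;y)$. The one structural observation driving everything is that $T$ enters $J$ \emph{twice}: once through the inner argument $z = Tx+b$ of $\sigma$, and once through the outer factor $T^*$. Hence the product rule will necessarily split $\nabla_T J$ into two primary contributions, one ``inner'' and one ``outer''.

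First I would dispose of $\nabla_b J$, which is the easy warm-up. Perturbing $b \mapsto b+h$ gives $\mathrm{d}z = h$, hence $\mathrm{d}r = \Sigma h$ and $\mathrm{d}s = T^* \Sigma h$, where $\Sigma = \mathrm{diag}(\sigma'(z))$ is symmetric. Pairing with $t$ yields $\mathrm{d}J = \langle t, T^*\Sigma h\rangle = \langle \Sigma T t, h\rangle$, from which $\nabla_b J = \Sigma T t$ is immediate.

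For $\nabla_T J$ I would perturb $T \mapsto T+H$ with arbitrary $H \in \mathbb R^{n,d}$ and expand to first order. The inner occurrence contributes $\mathrm{d}z = Hx$, hence a term $T^*\Sigma H x$ in $\mathrm{d}s$, while the outer occurrence contributes the term $H^* r$. Thus $\mathrm{d}s = H^* r + T^*\Sigma H x$ and $\mathrm{d}J = \langle t, H^* r\rangle + \langle t, T^*\Sigma H x\rangle$. Rewriting each pairing as a Frobenius pairing with $H$, namely $\langle t, H^* r\rangle = \langle r t^*, H\rangle_F$ and $\langle t, T^*\Sigma H x\rangle = \langle \Sigma T t x^*, H\rangle_F$ (here the symmetry of $\Sigma$ is used), produces the two primary terms $r t^*$ coming from the outer $T^*$ and $\Sigma T t x^*$ coming from the inner $\sigma$.

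The main obstacle is the remaining term $-T(t s^* + s t^*)$, which is \emph{not} present in the unconstrained ambient derivative computed above and must therefore originate from the Stiefel structure ($T^* T = I_d$, respectively $T T^* = I_{n}$). My plan here is to bring in the constraint explicitly: since the admissible perturbations must preserve $T^* T = I_d$, the gradient relevant for the manifold optimization carries a correction of the form $-T\Lambda$ with a symmetric multiplier $\Lambda$ enforcing the constraint, and I would substitute $T^* r = s$ to convert $T^*(r t^*)$ into $s t^*$ and its transpose into $t s^*$. The delicate point is the bookkeeping: one must pin down exactly which portion of $T^*\nabla_T J$ is absorbed by the constraint and verify that precisely $-T(t s^* + s t^*)$ survives, using $T^* r = s$ and the symmetry of $\Sigma$ throughout; I expect this reconciliation of the constraint term with the two primary terms to be the crux of the argument. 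Once it is settled, assembling the three pieces yields the stated formula for $\nabla_T J$, and the multilayer case then follows from the usual backpropagation recursion applied layer by layer.
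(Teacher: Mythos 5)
Your computation of $\nabla_b J=\Sigma\,Tt$ and of the two terms $rt^*$ and $\Sigma\,Ttx^*$ is correct and agrees with the paper. But the step you yourself flag as the crux --- producing $-T(ts^*+st^*)$ --- is only a plan, and the mechanism you propose for it is the wrong one. That term does not originate from the Stiefel constraint at all: it originates from the choice of smooth extension of $J$ off the manifold. The paper's proof (in the appendix) differentiates the extension $J(T,b)=\ell\bigl(T^\dagger\sigma(Tx+b);y\bigr)$ with $T^\dagger=(T^*T)^{-1}T^*$, which coincides with your $T^*$-extension exactly when $T^*T=I_d$; the lemma's ``Euclidean gradient'' is the ambient gradient of \emph{this} extension, evaluated at a Stiefel point. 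Writing $f(T,b)=f_1(T)f_2(T)f_3(T,b)$ with $f_1(T)=(T^*T)^{-1}$, $f_2(T)=T^*$, $f_3(T,b)=\sigma(Tx+b)$, the product rule contributes the extra differential $Df_1(T)[H]=-(T^*T)^{-1}(T^*H+H^*T)(T^*T)^{-1}$, whose pairing with $t$ and $s$ gives at $T^*T=I_d$ exactly $-\bigl\langle T(ts^*+st^*),H\bigr\rangle_F$, while the other two factors reproduce your $rt^*$ and $\Sigma\,Ttx^*$. This is an unconstrained product-rule computation; no multiplier or tangency condition enters anywhere.

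Your proposed reconciliation would moreover fail quantitatively if carried out. Enforcing $T^*T=I_d$ by subtracting $T\Lambda$ with symmetric $\Lambda$ determined by tangency is the same as projecting onto ${\mathcal T}_T\St(d,n)$, and for your ambient gradient $X\coloneqq rt^*+\Sigma\,Ttx^*$ this projection is $P_TX=X-T\,\mathrm{sym}(T^*X)$ with $\mathrm{sym}(A)\coloneqq\tfrac12(A+A^*)$, i.e., the correction is $-\tfrac12 T(st^*+ts^*)-T\,\mathrm{sym}(T^*\Sigma\,Ttx^*)$: the coefficient is $\tfrac12$ rather than $1$, and there is an additional cross term from $\Sigma\,Ttx^*$ that the lemma's formula does not contain. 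So no bookkeeping along your lines can verify that precisely $-T(ts^*+st^*)$ survives. What \emph{is} true, and worth knowing, is that $-T(ts^*+st^*)$ is purely normal: for symmetric $S$ one has $P_T(TS)=(I_n-TT^*)TS+\tfrac12T(S-S^*)=0$ at $T^*T=I_d$, so your gradient and the paper's project to the same Riemannian gradient --- this is also why the projection step in Algorithm~\ref{alg:SGD_Stiefel} can be skipped when the Cayley retraction is used. But as a proof of the stated Euclidean formula, which is tied to the pseudo-inverse extension, your argument has a genuine gap at exactly the point you deferred.
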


The proof follows by straightforward computations that are carried out in the appendix.

Now, we can formulate stochastic gradient descent (SGD) for $\mathcal J$ as in Algorithm~\ref{alg:SGD_Stiefel}.
This algorithm works for an arbitrary retraction, in particular the retraction in \eqref{retraction}.
In our numerical computations, we use the special retraction \eqref{retraction_2} in connection with the iteration scheme
\eqref{iteration_retraction}. Then, by \eqref{retraction-projection}, the projection step 3  of the algorithm can be skipped
and the retraction can be directly applied to the Euclidean gradient.

%---------------------------------------------------------------
\begin{algorithm}[!ht]
\begin{algorithmic}
\State \textbf{Input:} Training data $(x_i,y_i)_{i=1}^N$, batch size $B\in\N$, learning rate $(\lambda^{(r)})_{r\in\N}$.
\State \textbf{Initialization:}  $(T^{(0)},b^{(0)})$.
\For{$r=0,1,\ldots$}
    \State 1. Choose a mini batch $I\subset\{1,\ldots,N\}$ of size $|I|=B$.
    \State 2. Compute the Euclidean gradients of 
		$\mathcal J_I(T,b) = \sum_{i\in I} \ell(\Phi(x_i;u);y_i)$ 
		\State using Lemma~\ref{grad} and backpropagation
    \begin{align}
    \nabla_b \mathcal J_I\bigl(T^{(r)},b^{(r)}\bigr) &=\sum_{i\in I}\nabla_b \ell\bigl(\Phi(x_i;u);y_i\bigr),\\
    \nabla_T \mathcal J_I\bigl(T^{(r)},b^{(r)}\bigr)&=\sum_{i\in I} \nabla_T \ell\bigl(\Phi(x_i;u);y_i\bigr).
    \end{align}
    \State 3. Compute the Riemannian gradient on the Stiefel manifold with respect to $T$ 
    \State by projection \eqref{proj_stiefel_1}.      Skip this step if retraction \eqref{retraction_2} is used
    \begin{align}
    \nabla_{\St,T} \mathcal J_I\bigl(T^{(r)},b^{(r)}\bigr)&=P_{T^{(r)}}\bigl(\nabla_T \mathcal J_I\bigl(T^{(r)},b^{(r)}\bigr)\bigr).
    \end{align}
        \State 4. Update $T$ and $b$ by a gradient descent step with retraction, see \eqref{retraction}
    \begin{align}
    \tilde T^{(r+1)} &= \tilde {\mathcal R}_{T^{(r)} } \bigl( -\tfrac{\lambda^{(r)}}{B}\nabla_{\St,T} \mathcal J_I \bigl(T^{(r)},b^{(r)}\bigr) \bigr),\\
    b^{(r+1)} &= b^{(r)}-\tfrac{\lambda^{(r)}}{B}\nabla_b \mathcal J_I\bigl(T^{(r)},b^{(r)}\bigr).
    \end{align}		
\EndFor
\end{algorithmic}

\caption{Stochastic Gradient Descent Algorithm for minimizing \eqref{functional}} \label{alg:SGD_Stiefel}
\end{algorithm}

%%%%%%%%%%%%%%%%%%%%%%%%%%%%%%%%%%%%%%%%%%%%%%%%%%%%%%%%%%%%%%%%%%%%%%%%%%%%%%%%%%%%%%%%%%%%%%%%%
%----------------------------------------------
\section{Some Numerical Results} \label{sec:numerics}
%----------------------------------------------
%
In this section, we present simple numerical results to get a first impression on 
the performance of PPNNs for denoising and classification.
More sophisticated examples, which include the full repertoire of fine tuning of NNs, will follow in an experimental paper, see also our conclusions.
Throughout this section, we use the quadratic loss function $\ell(x;y) \coloneqq \|x-y\|_2^2$.
For training we apply a stochastic gradient descent algorithm.
We initialize the matrices $T_k\in\St(d,n)$ randomly using the orthogonal initializer from Tensorflow. 
That is, we generate a matrix $\tilde T_k\in \R^{n,d}$ with independent random entries following the standard normal distribution and use the initialization $T_k=\mathrm{qf}(\tilde T_k)$.
The batch size and learning rate are given for all examples separately.
\medskip

\noindent
\textbf{Denoising.} 
In this experiment, we  compare PPNNs with  Haar wavelet thresholding both for discrete Haar bases and Haar frames arising from the undecimated (translation invariant) version of the Haar transform.
In particular, the experiment is linked to the starting point of our considerations, namely wavelet and frame shrinkage.
For further details on the corresponding filter banks we refer to \cite{CV98,PS2006}.
As quality measure for our experiments we choose the average peak signal to noise ratio (PSNR) over the test set.
Recall that for a prediction $x\in\R^{n}$ and ground truth $y\in\R^{n}$ the PSNR is defined by
$$
\mathrm{PSNR}(x,y)=10\log_{10}\Big(\frac{(\max y - \min y)^2}{\sum_{i=1}^{n} (x_i-y_i)^2}\Big).
$$

Since we focus on the Haar filter, we restrict our attention to piecewise constant signals with mean $0$.
By $(x_i,y_i)\in\R^{d}\times\R^{d}$, $i=1,\ldots,N$, we denote pairs of piecewise constant signals $y_i$ of length $d=2^7=128$ and their noisy versions by $x_i=y_i+\epsilon_i$, 
where $\epsilon_i$ is white noise with standard deviation $\sigma = 0.1$.
For the \emph{signal generation}, we choose 
\begin{itemize}
\item  the number of constant parts of $y_i$ as $\max\{2,t_i\}$, where $t_i$ is the realization of a random variable 
following the Poisson distribution with mean $5$;
\item the discontinuities of $y_i$ as realization of a uniform distribution;
\item the signal intensity of $y_i$ for every constant part as realization of the standard normal distribution, where we subtract the mean of the signal finally.
\end{itemize}
Using this procedure, we generate training data $(x_i,y_i)_{i=1}^N$ and test data $(x_i,y_i)_{i=N+1}^{N+N_\text{test}}$ 
with $N=500000$ and $N_\text{test}=1000$.
The average PSNR of the noisy signals in the test set is $25.22$.
We use PPNNs with $K-1 \in \{1, 2, 3\}$ layers and set $T_K=I_d$ and $b_K=0$.
In all examples a batch size of $32$ and a learning rate of $0.5$ is used.
\medskip

We are interested in two different settings:

1. \emph{Learned orthogonal matrices versus Haar basis.} 
First, we consider PPNNs with $128$~neurons in each hidden layer and
componentwise soft-shrinkage $S_\lambda$ as activation function.
In particular, all matrices $T_k$ have to be orthogonal.
The denoising results of our learned PPNN are compared with the soft wavelet shrinkage with respect 
to the discrete orthogonal Haar basis in $\mathbb R^{128}$, i.e., the signal on all $6$ scales is decomposed by 
\begin{align}\label{Haar-shrinkage}
\Psi(x) = H^*  S_{\lambda}(H x),
\end{align}
where
$
H \coloneqq H_2 \, \cdots \, H_7
$
with matrices
\begin{equation}
H_{j} \coloneqq 
\begin{pmatrix} 
\tilde H_j&0\\
0&I_{2^7 - 2^{j}}
\end{pmatrix}, 
\quad
\tilde H_j \coloneqq \frac{1}{\sqrt{2}}
\mbox{\scriptsize
$\begin{pmatrix}
1&1&0&0&\ldots&0&0&0&0\\
0&0&1&1&\ldots&0&0&0&0\\
\vdots\\
0&0&0&0&\ldots&1&1&0&0\\
0&0&0&0&\ldots&0&0&1&1\\
1&-1&0&0&\ldots&0&0&0&0\\
0&0&1&-1&\ldots&0&0&0&0\\
\vdots\\
0&0&0&0&\ldots&1&-1&0&0\\
0&0&0&0&\ldots&0&0&1&-1\\
\end{pmatrix}$}
\in \mathbb R^{2^j,2^j}.
\end{equation}
The  average PSNRs on the test data are given in Tab.~\ref{tab:orth}.
For determining the optimal threshold in $S_\lambda$, we implemented two different methods. The first one is $5$-fold cross validation (CV).
More precisely, the training data is divided into $5$ subsets and each is used once as a test set with the remaining samples as training set.
The test loss for given $\lambda$ is averaged over all $5$ trials for judging the quality of the model.
The tested parameters $\lambda$ are chosen in $[0.05, 0.3]$ with steps of $0.05$ for a NN with one layer.
The second method is to set $\lambda$ as a trainable variable of the neural network and optimize it via stochastic gradient descent (SGD) during the training process.
For NNs with two and three layers, the tested parameters are divided by $2$ and $3$, respectively.
It appears that for only one hidden layer the Haar wavelet shrinkage is still better than the learned orthogonal matrix.
If we increase the number of layers, then PPNNs lead to a better average PSNR.

Two exemplary noisy signals and their denoised versions are shown in Fig.~\ref{fig:signals_orth}.
Since we have learned the orthogonal matrices $T_k$ with respect to the quadratic loss function, the visual quality of the PPNN denoised signals is clearly not satisfactory even with an improved PSNR.
The visual impression of signals denoised by Haar wavelet shrinkage can by improved (smoother signal) by increasing the threshold to $\lambda = 0.3$, resulting in a worse PSNR. 
To achieve a similar behavior with orthogonal matrices learned by PPNNs, we have to choose a different loss function.

\begin{table}
\begin{center}
\begin{tabular}{c|ccc|ccc}
&&CV&&&SGD&\\
Method& PSNR & Loss & Optimal $\lambda$& PSNR & Loss & Optimal $\lambda$\\\hline
Haar basis&$29.89$&$0.00359$&$0.1$&$29.99$&$0.00355$&$0.109$\\
One layer &$29.73$&$0.00373$&$0.1$&$29.74$&$0.00376$&$0.106$\\
Two layers&$30.55$&$0.00313$&$0.05$&$30.85$&$0.00299$&$0.0583$\\
Three layers&$30.83$&$0.00296$&$0.033$&$31.13$&$0.00284$&$0.0164$
\end{tabular}
\end{center}
\caption{PSNRs (average on test data) for denoising piecewise constant signals.
For only one layer, the Haar wavelet shrinkage is still better than the learned orthogonal matrix.
This changes if we increase the number of layers. CV denotes cross validation, SGD the learned threshold.}
\label{tab:orth}
\end{table}

%--------------------------------------------------------------------------
\begin{figure}%[!t]
\begin{subfigure}[t]{0.32\textwidth}
\centering
\includegraphics[width=\textwidth]{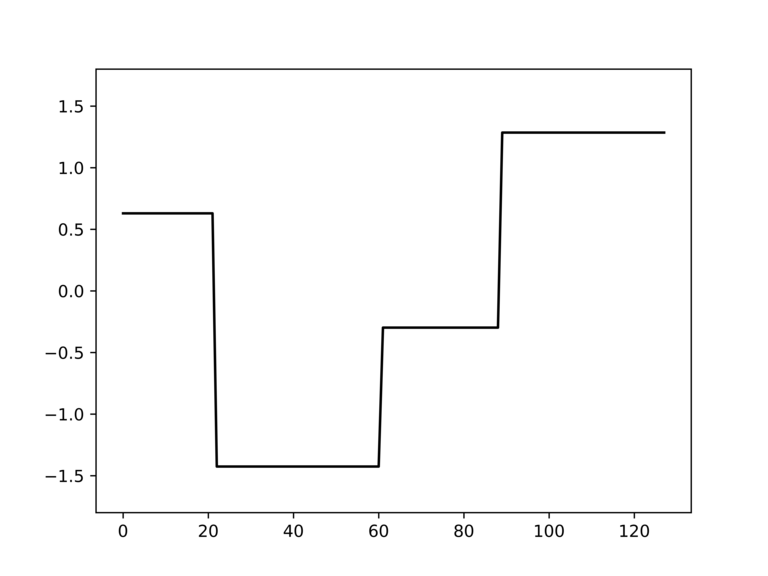}
\caption{Original I}
\end{subfigure}
\hfill
\begin{subfigure}[t]{0.32\textwidth}
\centering
\includegraphics[width=\textwidth]{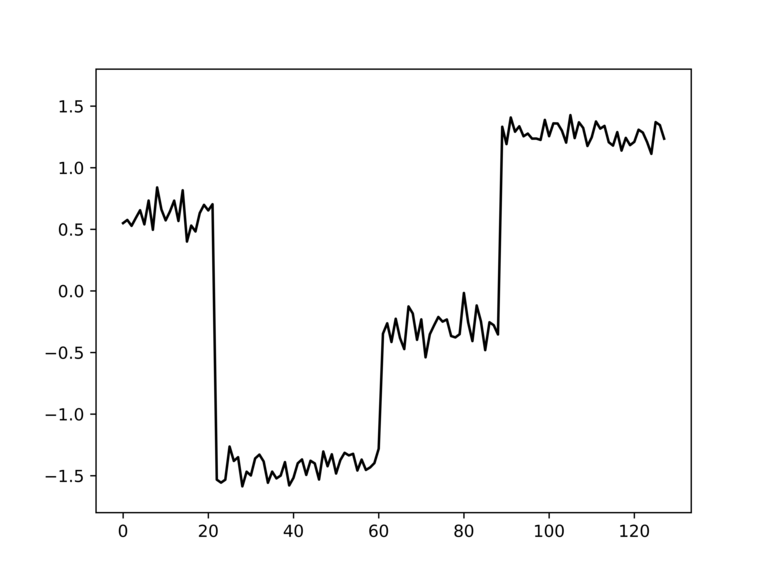}
\caption{Noisy (PSNR 29.11)}
\end{subfigure}\hfill
\begin{subfigure}[t]{0.32\textwidth}
\centering
\includegraphics[width=\textwidth]{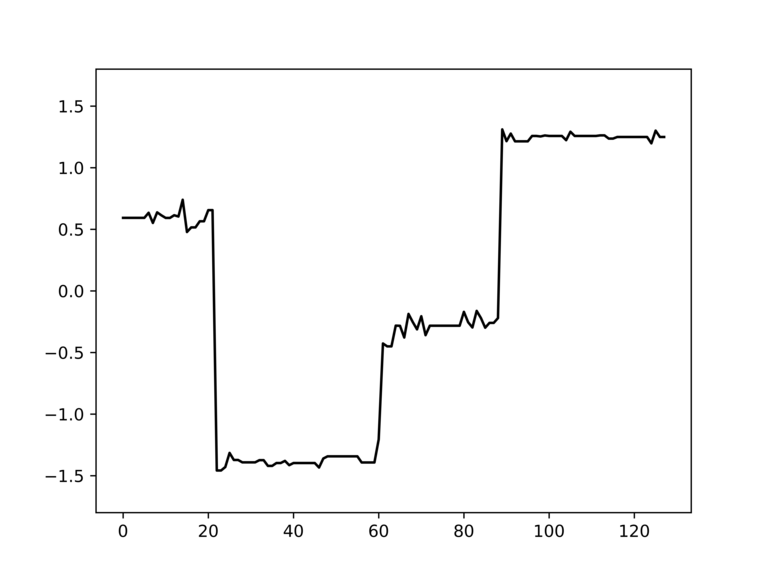}
\caption{Haar  basis, $\lambda=0.109$\\(PSNR 33.14)}
\end{subfigure}
\\
\begin{subfigure}[t]{0.32\textwidth}
\centering
\includegraphics[width=\textwidth]{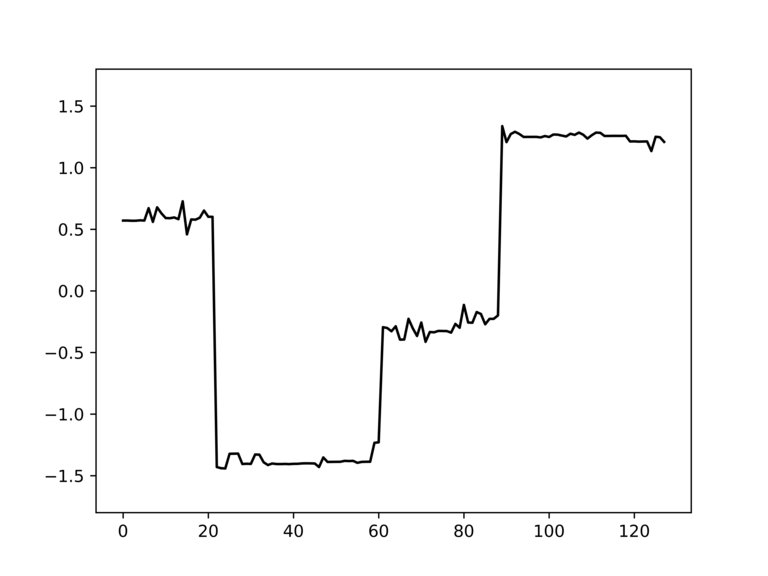}
\caption{1 layer, $\lambda=0.106$\\(PSNR 33.06)}
\end{subfigure}\hfill
\begin{subfigure}[t]{0.32\textwidth}
\centering
\includegraphics[width=\textwidth]{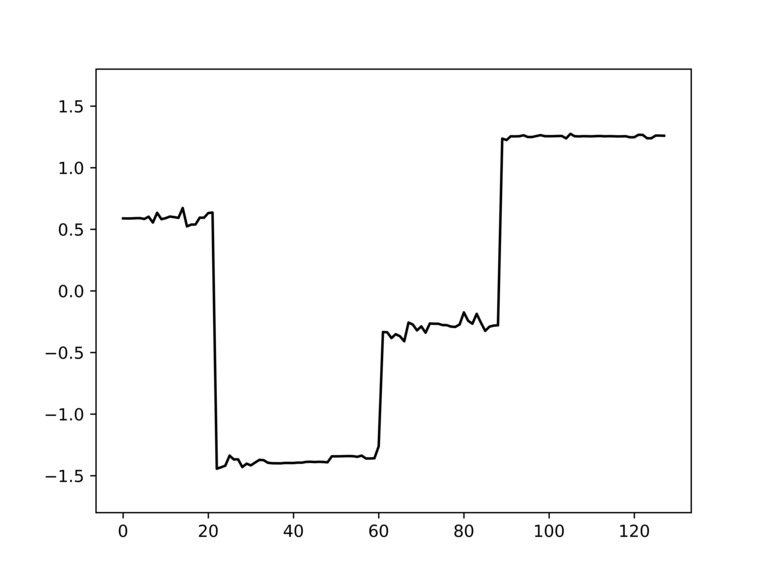}
\caption{3 layers, $\lambda=0.0390$\\ (PSNR 34.90)}
\end{subfigure}\hfill
\begin{subfigure}[t]{0.32\textwidth}
\centering
\includegraphics[width=\textwidth]{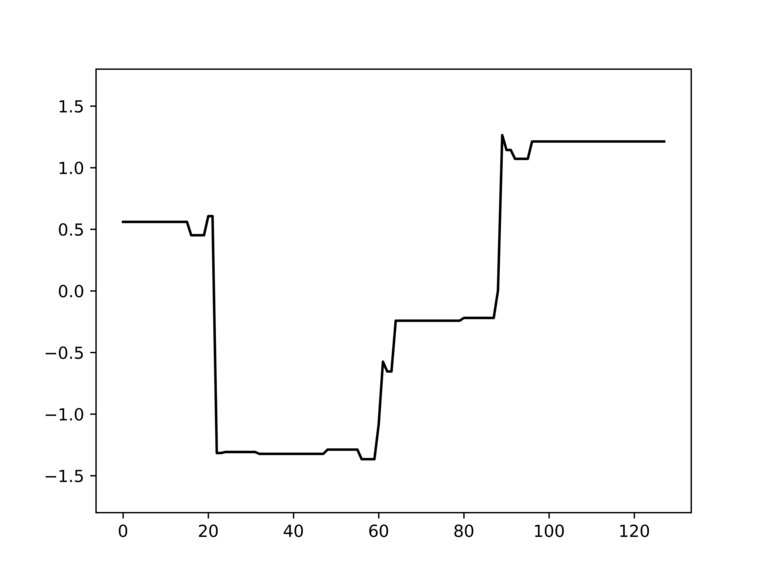}
\caption{Haar  basis, $\lambda=0.3$\\ (PSNR 27.43)}
\end{subfigure}\hfill
\\
\begin{subfigure}[t]{0.32\textwidth}
\centering
\includegraphics[width=\textwidth]{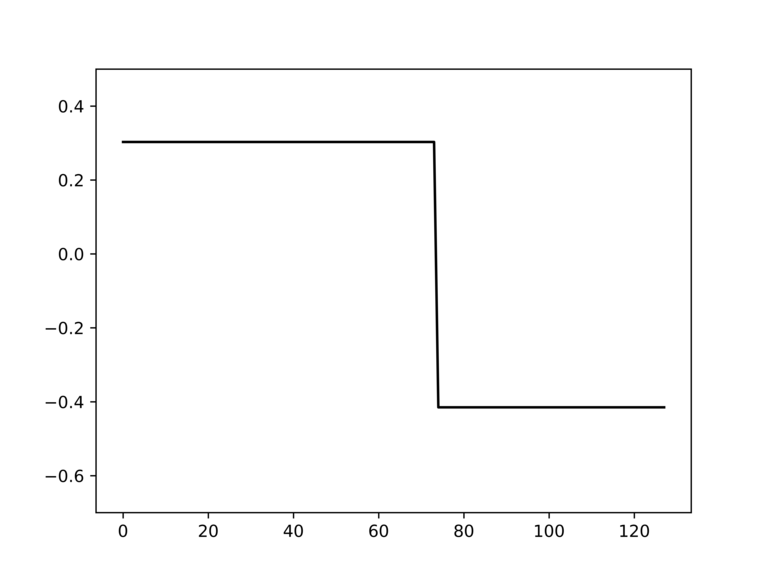}
\caption{Original II}
\end{subfigure}\hfill
\begin{subfigure}[t]{0.32\textwidth}
\centering
\includegraphics[width=\textwidth]{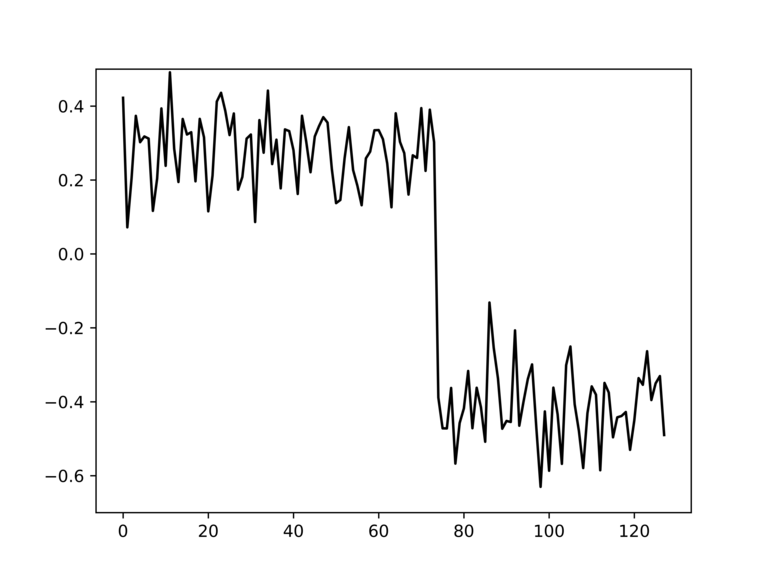}
\caption{Noisy (PSNR 17.45)}
\end{subfigure}\hfill
\begin{subfigure}[t]{0.32\textwidth}
\centering
\includegraphics[width=\textwidth]{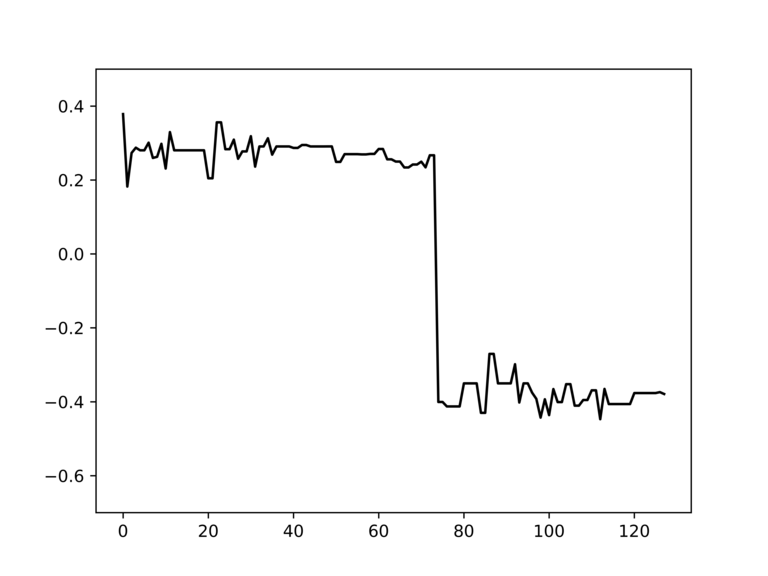}
\caption{Haar basis, $\lambda=0.109$\\(PSNR 24.05)}
\end{subfigure}\hfill
\begin{subfigure}[t]{0.32\textwidth}
\centering
\includegraphics[width=\textwidth]{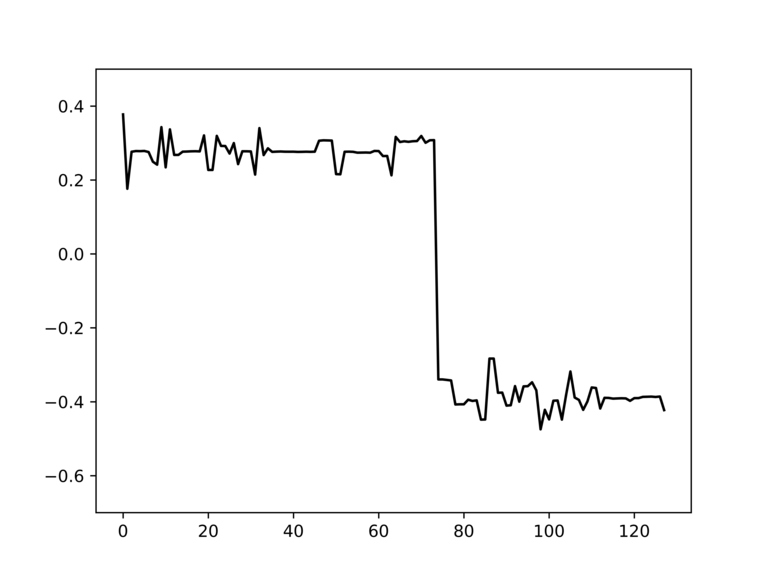}
\caption{1 layer, $\lambda=0.106$\\ (PSNR 24.60)}
\end{subfigure}
\hfill
\begin{subfigure}[t]{0.32\textwidth}
\centering
\includegraphics[width=\textwidth]{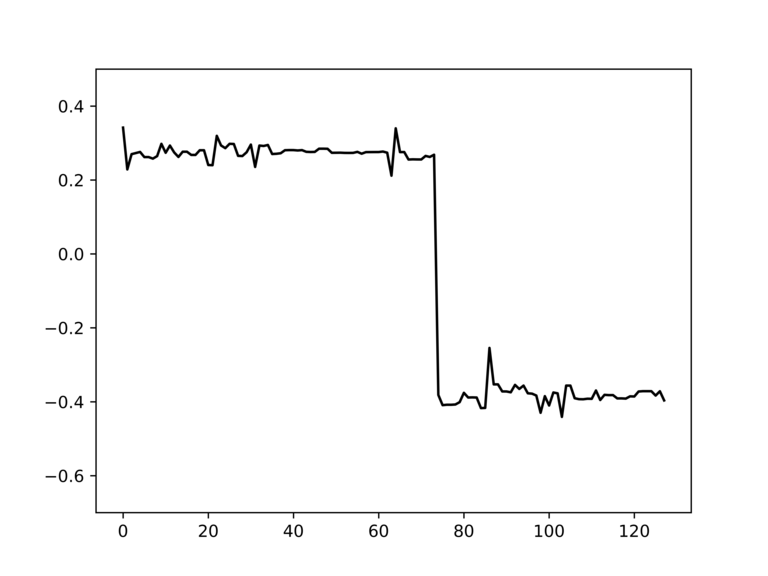}
\caption{3 layers, $\lambda=0.0390$\\ (PSNR 25.68)}
\end{subfigure}\hfill
\begin{subfigure}[t]{0.32\textwidth}
\centering
\includegraphics[width=\textwidth]{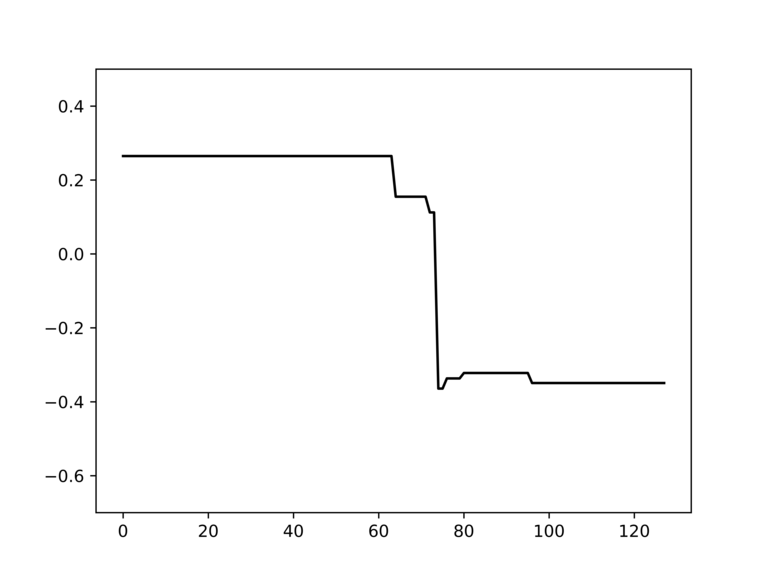}
\caption{Haar  basis, $\lambda=0.3$\\ (PSNR 20.09)}
\end{subfigure}
\caption{Two denoising examples using the Haar basis and learned orthogonal matrices. 
The signals denoised by PPNNs look better for an increasing number of layers.
For Haar wavelet shrinkage, a smoother denoised signal can be attained 
by increasing the threshold to $\lambda = 0.3$, although this signal has a smaller PSNR.}
\label{fig:signals_orth}
\end{figure}
%--------------------------------------------------------------
\medskip

2. \emph{Learned Stiefel matrices versus Haar frame.}
Haar wavelet shrinkage can be improved by using Haar wavelet frames within a so-called ``algorithm \'a trous'', see \cite{mallat2008}. 
We apply a similar method as in  \eqref{Haar-shrinkage}, but with a rectangular matrix $H$ whose rows form a Haar frame.
More precisely, the Haar filter is used without subsampling. 
This results, in contrast to the original Haar transform, in a translational invariant multiscale transform.
Instead of the matrices $H_7$,  the nonsubsampled (convolution) matrix
\begin{equation}\label{nolanted} \mbox{\scriptsize $\frac{1}{\sqrt{2}}
\begin{pmatrix}
1&1&0&0&\ldots&0&0&0&0\\
0&1&1&0&\ldots&0&0&0&0\\
\vdots\\
0&0&0&0&\ldots&&1&1&0\\
0&0&0&0&\ldots&0&0&1&1\\
1&0&0&0&\ldots&0&0&0&1\\
1&-1&0&0&\ldots&0&0&0&0\\
0&1&-1&0&\ldots&0&0&0&0\\
\vdots\\
0&0&0&0&\ldots&0&1&-1&0\\
0&0&0&0&\ldots&0&0&1&-1\\
-1&0&0&0&\ldots&0&0&0&1\\
\end{pmatrix} 
\in \mathbb R^{256,128}$.}
\end{equation}
is applied to the original signal. Note that we assume a periodic continuation of the signal.
In each of the following $j$ steps, $j=1, \ldots,6$, we keep the lower part and transform again the upper smoothed part by essentially the same matrix, where
$2^{j}-1$ zeros are inserted between the filter coefficients. 
The output signal has size $8\cdot128$, where the last part of the signal is just the averaged signal, which is equal to zero.
Overall, the original signal  is multiplied by
$H\in \R^{1024, 128}$, where for $j\in\{0,\ldots,6\}$ and $i\in \{0,\ldots,128-2j\}$ the $(i+128 j)$-th row is given by one element of a Haar wavelet frame
\begin{align}
(\underbrace{\begin{array}{ccc}0&...&0 \end{array}}_{i \text{ times } 0}
\underbrace{\begin{array}{ccc}1&...&1 \end{array}}_{j \text{ times } 1}
\underbrace{\begin{array}{ccc}-1&...&-1 \end{array}}_{j \text{ times } -1}
\underbrace{\begin{array}{ccc}0&...&0 \end{array}}_{128-i-2j \text{ times } 0})
\end{align}
and the last $128$ rows of $H$ are given by $(\begin{array}{ccc}1&...&1\end{array})$. 
It is well known that for the above translation invariant Haar frame transform,
a scale dependent shrinkage has to be applied to each scale, namely starting with threshold $\lambda$ the next scales should be thresholded by
\begin{align}\label{threshold_adapt}
\frac{1}{\sqrt{2}^{j}}\lambda, \quad j=0,\ldots,6.
\end{align}
For an explanation of this statement we refer to \cite{SWPMW2004}.
In summary, we obtain
\begin{align}
\hat\Psi(x)=H ^*\tilde S_\lambda(H x),
\end{align}
where $\tilde S_\lambda$ denotes the scale-wise adapted thresholding.

Now, we compare this scale-dependent Haar frame soft thresholding method with a learned PPNN with $1024$ neurons in the hidden layers and componentwise soft-shrinkage $S_\lambda$ as activation function.
The optimal threshold in $S_\lambda$ is again determined either by $3$-fold cross validation, where the tested parameters are chosen in $[0.01, 0.1]$ with steps of $0.01$, or by setting $\lambda$ as a trainable variable of the neural network and optimize it via stochastic gradient descent.
We emphasize that in contrast to the Haar frame shrinkage procedure with~\eqref{threshold_adapt}, the same threshold for each component is used in the activation function of our PPNN.
The resulting PSNRs are given in Tab.~\ref{tab:haar_NN}.
As expected, using the same threshold in the classical Haar frame shrinkage is worse than the scale-adapted Haar frame shrinkage.
PPNNs with learned Stiefel matrices perform better for an increasing number of layers.

Finally, Fig.~\ref{fig:signals_frame} contains the denoised signals from Fig.~\ref{fig:signals_orth}.
The results are visually better than in the previous figure, although still not satisfactory due to the used loss function.

\begin{table}[t]
\centering
\begin{tabular}{c|ccc|ccc}
&&CV&&&SGD&\\
Method & PSNR & Loss & Optimal $\lambda$ & PSNR & Loss & Optimal $\lambda$\\\hline
Haar frame with $S_\lambda$& $26.99$& $0.00669$ & $0.02$ & $27.14$ & $0.00649$&$0.0265$\\
Haar frame with $\tilde S_\lambda$& $30.58$& $0.00307$ & $0.08$ & $30.59$&$0.00307$&$0.0820$\\
One layer PPNN & $32.46$ & $0.00211$ & $0.06$ & $32.50$ & $0.00207$ & $0.0514$\\
Two layer PPNN & $32.79$ & $0.00200$ & $0.04$ & $33.05$ & $0.00186$ & $0.0250$\\
Three layer PPNN & $33.13$ & $0.00185$ & $0.02$ & $33.22$ & $0.00181$ & $0.0164$ 
\end{tabular}
\caption{PSNRs (average on test data) for denoising piecewise constant signals.
The first row with  non-scale-adapted thresholds are worse than those with the adapted ones in the second row.
Learned Stiefel matrices with the same componentwise activation function
lead to better results when the corresponding PPNNs are applied for denoising. CV denotes cross validation, SGD the learned threshold.
}
\label{tab:haar_NN}
\end{table}

\begin{figure}[!t]
\begin{subfigure}[t]{0.325\textwidth}
\centering
\includegraphics[width=\textwidth]{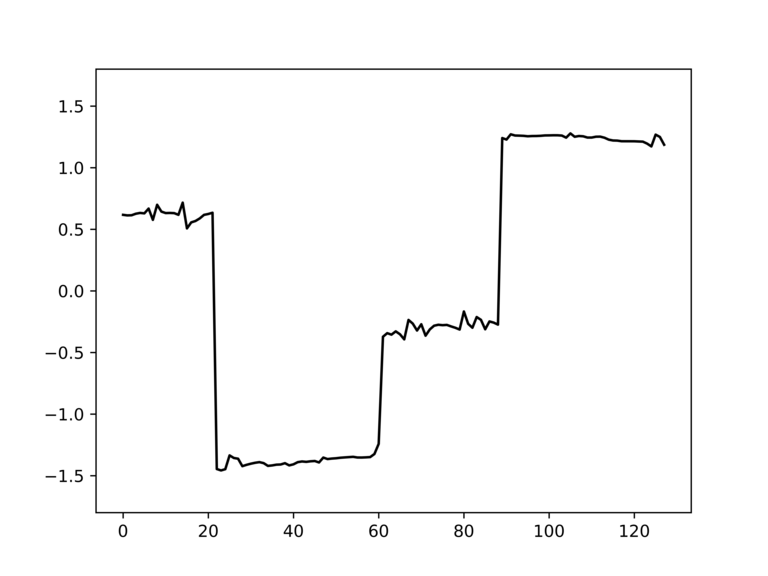}
\caption{Haar frame with $\tilde S_\lambda$,\\$\lambda=0.082$, (PSNR $34.25$)}
\end{subfigure}
\hfill
\begin{subfigure}[t]{0.325\textwidth}
\centering
\includegraphics[width=\textwidth]{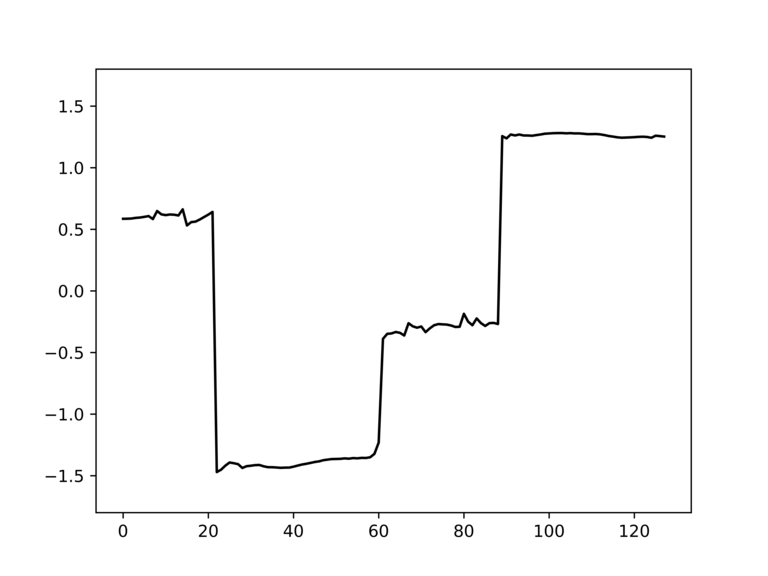}
\caption{1 layer, $\lambda=0.0515$,\\(PSNR $36.24$)}
\end{subfigure}
\hfill
\begin{subfigure}[t]{0.325\textwidth}
\centering
\includegraphics[width=\textwidth]{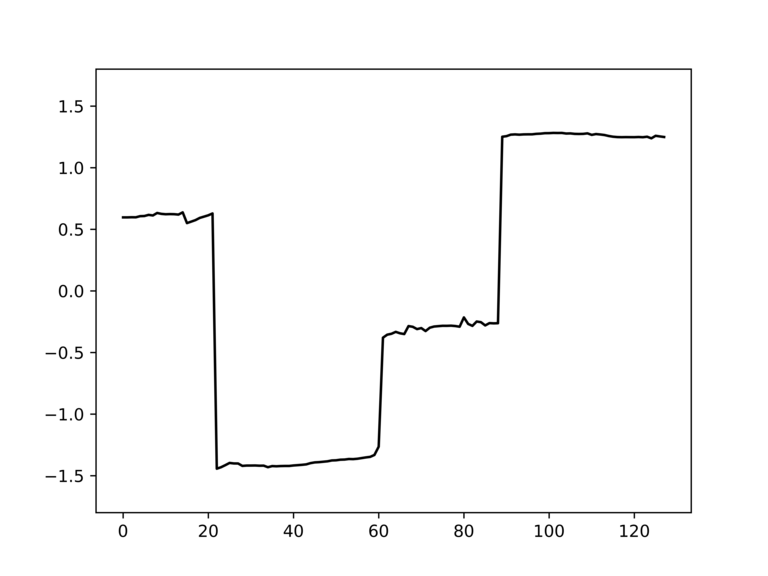}
\caption{3 layers, $\lambda=0.0164$,\\(PSNR $37.53$)}
\end{subfigure}
\hfill
\begin{subfigure}[t]{0.325\textwidth}
\centering
\includegraphics[width=\textwidth]{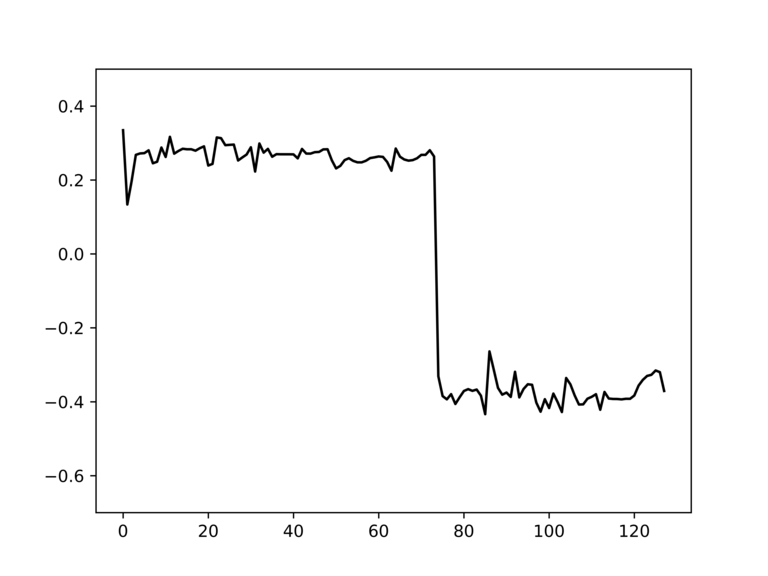}
\caption{Haar frame with $\tilde S_\lambda$,\\$\lambda=0.082$, (PSNR $23.46$)}
\end{subfigure}
\hfill
\begin{subfigure}[t]{0.325\textwidth}
\centering
\includegraphics[width=\textwidth]{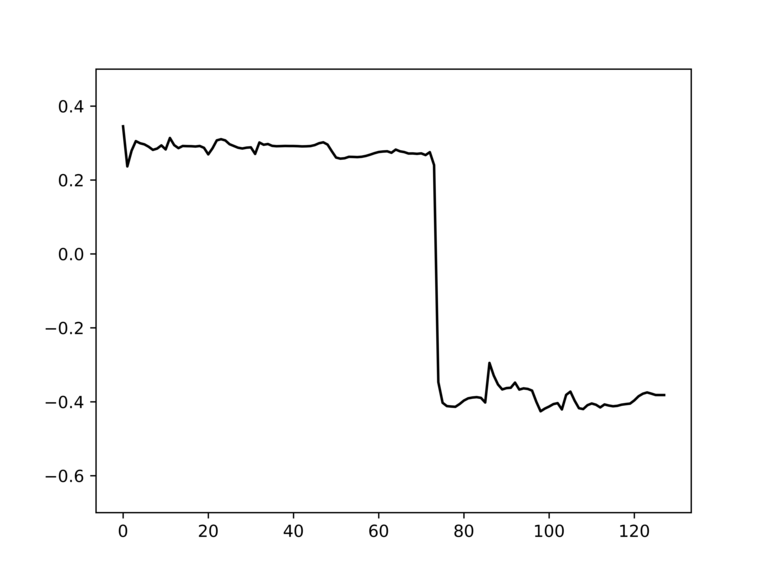}
\caption{1 layer, $\lambda=0.0515$,\\(PSNR $27.60$)}
\end{subfigure}
\hfill
\begin{subfigure}[t]{0.325\textwidth}
\centering
\includegraphics[width=\textwidth]{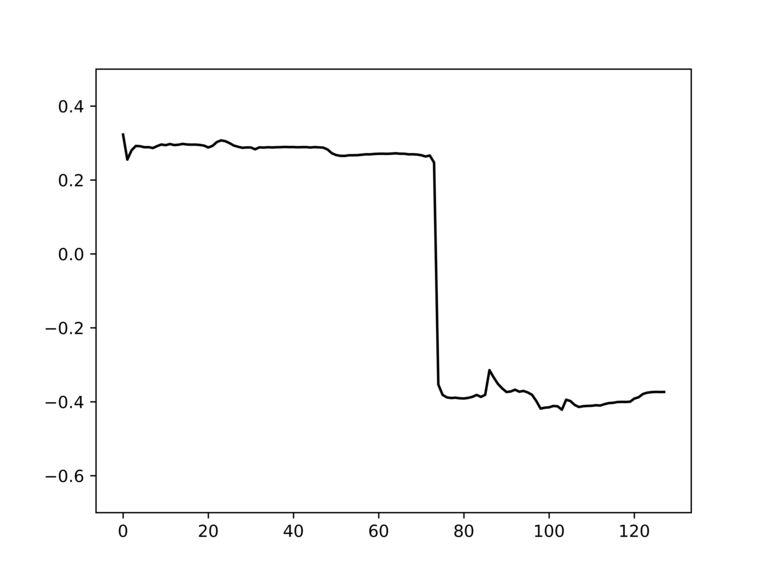}
\caption{3 layers, $\lambda=0.0164$,\\(PSNR $28.06$)}
\end{subfigure}

\caption{Denoising for the signals in Fig.~\ref{fig:signals_orth}.
The undecimated Haar frame with scale-adapted shrinkage 
and learned Stiefel matrices of the same size as the Haar frame are compared for $\lambda$ from Tab.~\ref{tab:haar_NN}.
The PPNN denoised signals are visually nicer than those with scale-adapted Haar frame shrinkage.}
\label{fig:signals_frame}
\end{figure}
\medskip

\noindent
\textbf{Classification:} In this example, we train a PPNN for classifying the MNIST data set\footnote{http://yann.lecun.com/exdb/mnist}.
The length of the input signals is $d=28^2$.
We consider a PPNN with $K-1 =5$ layers and $n_1=n_2=784$, $n_3=n_4=400$ and $n_5=200$ neurons in the layers and componentwise applied ReLu activation function $\sigma(x) = \max(0,x)$.
To get 10 output elements (probabilities)  in $(0,1)$, we use an additional sixth layer 
$$
g (T_K x + b_K), \quad T_K \in \mathbb R^{10,d}, \, b_K \in \mathbb R^{10}
$$
with another activation function $g(x) \coloneqq \frac{1}{1 + \mathrm{exp} (-x)}$.
For training, we use a batch size of $1024$ and a learning rate of $5$.
After $1000$ epochs we reach an accuracy of $0.9855$ on the test set. One epoch takes about one second on a NVIDIA Quadro M5000 GPU.
In Fig.~\ref{fig:training}, the training and test loss of our PPNN during training are plotted. 
\begin{remark}
As already mentioned in Remark \ref{OMDSM},
NNs with Stiefel matrices were also applied in \cite{huang2018orthogonal}.
The authors of \cite{huang2018orthogonal} reported that the training process using Riemannian optimization 
on the Stiefel manifold could be unstable or divergent.
We do not observe such instabilities in our setting.
\end{remark}

\begin{figure}[!t]
\centering
\includegraphics[width=0.5\textwidth]{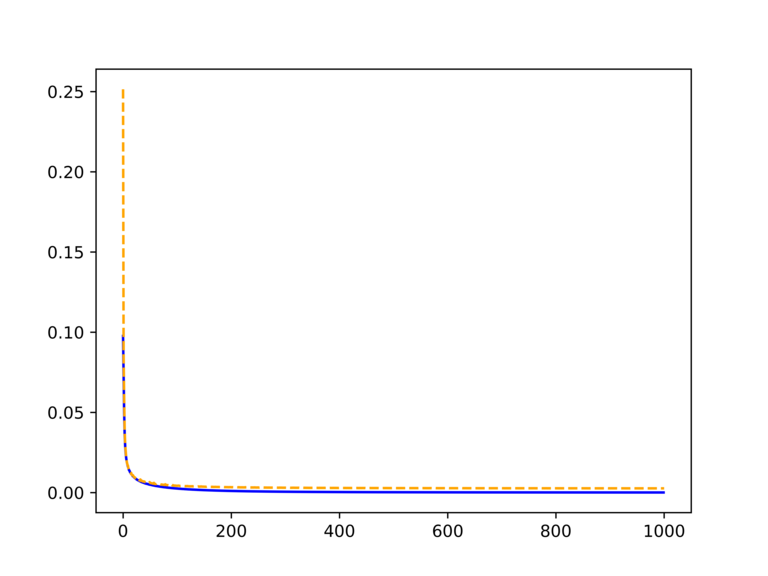}
\caption{Training loss (blue) and test loss (orange) of a PPNN on the MNIST data set. 
The x-axis corresponds to the number of epochs and the y-axis to the associated loss value.}
\label{fig:training}
\end{figure}

\noindent
\textbf{Adversarial Attacks:} Neural networks with bounded Lipschitz constants 
were successfully applied to defend against adversarial attacks, see \cite{GSS2015,TSS2018}. 
In this example, we demonstrate that a PPNN is more robust under adversarial attacks than a standard neural network.
Assume that we have a neural network 
$f=(f_1,\ldots,f_{10})\colon\R^{28^2}\to(0,1)^{10}$ for classifying MNIST, 
e.g., a PPNN as described in the previous example. 
Further, we have given an input $x\in\R^{28^2}$. Now, we perform an adversarial attack in the following way:
\begin{itemize}
\item Set $\nu \coloneqq\argmax_{i\in\{1,\ldots,10\}} f_i(x)$, and $g \coloneqq \nabla_x \tfrac{f_\nu (x)}{\|f(x)\|_1}$.
\item Initialize $\epsilon = 10^{-2}$ and while $\nu=\argmax_{i\in\{1,\ldots,10\}} f_i(x-\epsilon g)$ update $\epsilon=2\epsilon$.
\end{itemize}
This procedure is applied on two neural networks. 
More precisely, the first one is the PPNN from the previous example and the second one is a neural network with the same structure as the PPNN, but without the orthogonality constraint.
We train the standard neural network using the Adam optimizer with learning rate of $10^{-4}$ and end up with an accuracy of $0.9863$. 
Then, we perform an adversarial attack on both of these networks and record the norm $\|\epsilon g\|_2$ of the noise that changes the prediction.
We perform this for each input $x_k\in[0,255]^{28^2}$, $k=1,\ldots,10000$,
in the test set and compute the mean, standard deviation and median of these norms. 
For the PPNN, we record an average norm of $38.28\pm24.51$ and a median of $33.71$. 
For the standard neural network, 
we record an average norm of $30.48\pm 15.72$ and a median of $28.75$. 
Overall, the PPNN seems to be more stable against such adversarial attacks.

%----------------------------------------------
\section{Conclusions} \label{sec:conclusions}
%----------------------------------------------
In this paper, we have shown that for real Hilbert spaces $\HH$ and $\KK$, a proximity operator $\Prox\colon \KK \to \KK$  
and a linear bounded operator $T\colon \HH \to \KK$ 
the operator $T^\dagger\, \Prox (T \cdot + b)$ with $b \in \KK$ is a proximity operator on $\HH$.
As a consequence, the famous frame soft shrinkage operator can be seen  as a proximity operator. 
Using this new relations, we have discussed special neural networks arising from Parseval frames and stable activation functions.
Our networks are Lipschitz networks, which are moreover averaged operators.
These networks include recently proposed ones containing matrices whose transposes are in a Stiefel manifold
and interpret them from another, more general point of view.

In our future work, we want to explore for which learning tasks the higher flexibility of our PPNNs
is advantageous. Taking  more general operators $T$ into account may be also useful.
In particular, we will apply our PNNs  within Plug-and-Play algorithms.
Another question that we want to address is to constrain our Stiefel matrices further, e.g., towards convolutional networks and to sparsity constraints. 
Depending on the application, we have to design appropriate loss functions as well as incorporating regularizing terms.

For our experiments the stochastic gradient algorithm on Stiefel manifolds worked well.
However, other minimization methods could be 
taken into account.
In \cite{huang2018orthogonal} for example, the authors proposed an orthogonal weight normalization algorithm  that was inspired by the fact that eigenvalue decomposition is differentiable. 
Finally, we like to mention that a proximal backpropagation algorithm 
taking implicit instead of explicit gradient steps to update the network parameters during neural network training 
was proposed in \cite{FMMC2017}.

A better understanding of the convergence of the cyclic proximal point algorithm, see Remark \ref{rem:cpp}, 
and suitable early stopping criteria if the network $\Phi$ is iteratively used may help to design NNs and to understand their success.
  
 %------------------------------------------
\appendix
\section{Gradient computation}
%------------------------------------------
\begin{proof}[\textbf{Proof of Lemma \ref{grad}}:]
For potential further applications, we compute the gradient of 
the more general functional 
\[J(T,b) \coloneqq \ell \bigl(T^\dagger \sigma(Tx+b);y\bigr) = \ell \bigl( f(T,b);y \bigr),\]
where $T^\dagger = (T^* T)^{-1} T^*$. 
Note that the case $T^\dagger = T^* (T T^* )^{-1}$ can be treated in an analog way.
Using
$f_1(T) = (T^* T) ^{-1}$,  
$f_2(T) = T^*$,
$f_3(T,b) = \sigma(Tx+b)$, the function $f(T,b) \coloneqq T^\dagger \sigma(Tx + b)$ is decomposed as 
$$
f(T,b)   = f_1(T) \, f_2(T) \, f_3(T,b).
$$
By the product rule it holds
\begin{align}
	&D_T f(T,b)[H]\\ = &Df_1(T)[H] \, f_2(T) \, f_3(T,b) + f_1(T) \,  Df_2(T)[H] \, f_3(T,b) + f_1(T) \, f_2(T) \,  D_Tf_3(T,b)[H].
\end{align}
The involved differentials are given by
\begin{align}
  D f_1(T)[H] &= -(T^*T )^{-1} (T^* H + H^* T) (T^*T )^{-1},\\
  D f_2(T)[H] &= H^*,\\
  D_Tf_3(T,b)[H] &= \mathrm{ diag} \bigl(\sigma'(Tx+b)\bigr) \,  Hx.
\end{align}
Consequently, we obtain
\begin{align}
 D_T f(T,b)[H] = &
 -(T^* T)^{-1} \bigl((T^* H + H^* T) (T^* T)^{-1} s
  + H^* \, r 
 + T^* \Sigma H x\bigr).
\end{align}
Using the chain rule, we conclude
\begin{align}
&D_T J(T,b)[H] = \langle \nabla_T J(T,b) , H \rangle 	= \langle t, D_T f(T,b)[H] \rangle\\
=& - \langle t, (T^* T)^{-1} (T^* H + H^* T) (T^* T)^{-1} s \rangle + \langle t, (T^* T) ^{-1}  H^* r\rangle + \langle t,(T^* T) ^{-1}T^* \Sigma Hx\rangle\\
=& - \langle T (T^* T)^{-1}t s^* (T^* T)^{-1},  H \rangle - \langle (T^* T)^{-1} t s^* (T^* T)^{-1} T^*,  H^*\rangle\\
& \quad +\langle (T^* T) ^{-1} t r^*,   H^* \rangle + \langle \Sigma \, T (T^* T) ^{-1} t x^*, H \rangle \\
=& \bigl\langle -T (T^* T)^{-1}(t s^* + s t^*)(T^* T)^{-1} + r t^* (T^* T) ^{-1}    + \Sigma \, T (T^* T) ^{-1} t x^*, H \bigr\rangle.
\end{align}
Thus,
\begin{align}
\nabla_T J(T,b) 
&= -T (T^* T)^{-1} \left( t s^* + s t^* \right) (T^* T)^{-1}
 + r t^* (T^* T) ^{-1}    + \Sigma \, T (T^* T) ^{-1} t x^*.
\end{align}
For the gradient with respect to $b$, we obtain by the chain rule
\begin{align}
\nabla_b J(T,b)^*
&= t^* T^\dagger \nabla_b \bigl(\sigma(Tx+b)\bigr)(b)= t^* T^\dagger \Sigma.
\end{align}
\end{proof}

%-----------------------------------------------------------
\section{Activation functions}\label{sec:activation}

The following table lists many functions $f$ having a proximity $\sigma$ that is a common activation function
in NNs from \cite{CP2018}.

%------------------------------------------------------------
{\scriptsize
\begin{table}
{\scriptsize
\begin{center}
\begin{tabular}{| c | c | c | }
\hline
 Name & activation function $\sigma(x)$ &  $f(x)$ with $\sigma = \prox_{f}$
\\
\hline
\hline
     Linear activation & $x$ & $0$
		\\[0.5ex]
		\hline
		\shortstack[c]{Saturated linear \\ activation (SaLU)} & 
		$		
    \begin{cases}
		1 & \text{if\quad$x>1$} \\
        x & \text{if\quad$-1\leq x\leq 1$}\\
        -1 & \text{if\quad$x<-1$}
  \end{cases}
	$ 
	& $\iota_{[-1,1]}=
  \begin{cases}
		0 & \text{if\quad$x\in [-1,1]$} \\
        \infty & \text{if\quad$x\notin [-1,1]$}\\
  \end{cases}$
    \\[0.5ex]\hline
  Soft Thresholding & $\begin{cases}
		x-\lambda & \text{if\quad$x> \lambda$} \\
		0 & \text{if\quad$x \in [-\lambda,\lambda]$}\\
                x+\lambda & \text{if\quad$x < -\lambda$}  \\
  \end{cases}$ & $\lambda |x|$
		\\[0.5ex]
		\hline
		\shortstack[c]{Saturated linear \\ activation (SaLU)} &  
		$		
    \begin{cases}
		1 & \text{if\quad$x>1$} \\
        x & \text{if\quad$-1\leq x\leq 1$}\\
        -1 & \text{if\quad$x<-1$}
  \end{cases}
	$ 
	& $\iota_{[-1,1]}=
  \begin{cases}
		0 & \text{if\quad$x\in [-1,1]$} \\
        \infty & \text{if\quad$x\notin [-1,1]$}\\
  \end{cases}$
    \\[0.5ex]\hline

  \shortstack[c]{Rectified linear unit\\ (ReLU)} & 
	$\begin{cases}
		x & \text{if\quad$x>0$} \\
        0 & \text{if\quad$x\leq 0$}\\
  \end{cases}$ & $\iota_{[0,\infty)}=
  \begin{cases}
		0 & \text{if\quad$x\in [0,\infty)$} \\
        \infty & \text{if\quad$x\notin [0,\infty)$}\\
  \end{cases}$
  \\[0.5ex]\hline
	
  \shortstack[c]{Parametric rectified \\linear unit (PReLU)} &  $
    \begin{cases}
		x & \text{if\quad$x>0$} \\
        \alpha x & \text{if\quad$x\leq 0$}\\
  \end{cases}$ , $\alpha\in (0,1]$ & $
   \begin{cases}
		0 & \text{if\quad$x>0$} \\
        (\frac{1}{\alpha}-1)\dfrac{x^2}{2} & \text{if\quad$x\leq 0$}\\
  \end{cases}$
  \\[0.5ex]\hline
	
  \shortstack[c]{Bent identity\\ activation} & $ \dfrac{x+\sqrt{x^2+1}}{2}$ & $
  \begin{cases}
		x/2-\ln(x+\frac{1}{2})/4 & \text{if\quad$x>-\dfrac{1}{2}$} \\
        \infty & \text{if\quad$x\leq -\dfrac{1}{2}$}\\
  \end{cases}$
	\\[0.5ex]\hline
  
  \shortstack[c]{Inverse square root\\linear unit} & $
    \begin{cases}
		x & \text{if\quad$x\geq 0$} \\
        \dfrac{x}{\sqrt{x^2+1}} & \text{if\quad$x<0$}\\
  \end{cases}$ & $
  \begin{cases}
  		0 & \text{if\quad$x\geq 0$} \\
		1-x^2/2-\sqrt{1-x^2} & \text{if\quad$-1\leq x<0$} \\
        \infty & \text{if\quad$x<-1$}\\
  \end{cases}$
	\\[0.5ex]\hline
  
   \shortstack[c]{Inverse square\\ root unit} & $
        \dfrac{x}{\sqrt{x^2+1}}$ & $
  \begin{cases}
		-x^2/2-\sqrt{1-x^2} & \text{if\quad$|x|\leq 1$} \\
        \infty & \text{if\quad$|x|>1$}\\
  \end{cases}$
	\\[0.5ex]\hline
   
  Arctangent activation & $ \dfrac{2}{\pi}\mathrm{arctan}(x)$ & $
  \begin{cases}
		-\dfrac{2}{\pi}\ln(\cos(\dfrac{\pi x}{2}))-\dfrac{x^2}{2}& \text{if\quad$|x|<1$} \\
        \infty & \text{if\quad$|x|\geq 1$}\\
  \end{cases}$
	\\[0.5ex]\hline 
  
  \shortstack[c]{Hyperbolic tangent\\ activation}& $ \mathrm{tanh}(x)$ & $
  \begin{cases}
		x \mathrm{arctanh}(x) + \dfrac{\ln(1-x^2)-x^2}{2} & \text{if\quad$|x|<1$} \\
        \infty & \text{if\quad$|x|\geq 1$}\\
  \end{cases}$
	\\[0.5ex]\hline 
  
  Elliot activation& $\dfrac{x}{|x|+1}$ & $
  \begin{cases}
		-|x|-\ln(1-|x|)-\dfrac{x^2}{2} & \text{if\quad$|x|<1$} \\
        \infty & \text{if\quad$|x|\geq 1$}\\
  \end{cases}$
	\\[0.5ex]
	\hline 
  
  \shortstack[c]{Inverse hyperbolic\\ sine}& $\mathrm{arcsinh}(x)$ & $\mathrm{cosh}(x)-\dfrac{|x|^2}{2}$
	\\[0.5ex]\hline
    
  Logarithmic activation& $\mathrm{sgn}(x)\ln(|x|+1)$ & $\exp(|x|)-|x|-1-\dfrac{|x|^2}{2}$\\\hline 
    
    \end{tabular}
		
		\caption{Stable activation functions and their corresponding proximal mappings, see \cite{CP2018}.}
   \label{prox:act}
\end{center}
}
\end{table}
}

\subsection*{Acknowledgments} 
Many thanks to the reviewer for the constructive suggestions in relation with Lipschitz networks
and Plug-and-Play algorithms.
Further, we like to thank J.-C. Pesquet for pointing us to \cite{Combettes2018}, which we were not aware of when writing this paper.
Funding by the German Research Foundation (DFG) within the project STE 571/13-1, the RTG 1932 and the RTG 2088 is gratefully acknowledged.
We gratefully acknowledge the support of NVIDIA Corporation with the donation of the Quadro M5000 GPU used for this research.

\bibliographystyle{abbrv}
\bibliography{references}

\begin{thebibliography}{10}

\bibitem{AMS08}
P.-A. Absil, R.~Mahony, and R.~Sepulchre.
\newblock {\em Optimization Algorithms on Matrix Manifolds}.
\newblock Princeton and Oxford, Princeton University Press, 2008.

\bibitem{ALG19}
C.~Anil, J.~Lucas, and R.~Grosse.
\newblock Sorting out {L}ipschitz function approximation.
\newblock In K.~Chaudhuri and R.~Salakhutdinov, editors, {\em Proceedings of
  the 36th International Conference on Machine Learning}, volume~97 of {\em
  Proceedings of Machine Learning Research}, pages 291--301, Long Beach,
  California, USA, 2019. PMLR.

\bibitem{ASB2016}
M.~Arjovsky, A.~Shah, and Y.~Bengio.
\newblock Unitary evolution recurrent neural networks.
\newblock In {\em International Conference on Machine Learning}, pages
  1120--1128, 2016.

\bibitem{bansal2018can}
N.~Bansal, X.~Chen, and Z.~Wang.
\newblock Can we gain more from orthogonality regularizations in training deep
  networks?
\newblock In {\em Advances in Neural Information Processing Systems}, pages
  4261--4271, 2018.

\bibitem{BC11}
H.~H. Bauschke and P.~L. Combettes.
\newblock {\em Convex Analysis and Monotone Operator Theory in Hilbert Spaces}.
\newblock Springer, New York, 2011.

\bibitem{Beck17}
A.~Beck.
\newblock {\em First-Order Methods in Optimization}, volume~25 of {\em MOS-SIAM
  Series on Optimization}.
\newblock SIAM, 2017.

\bibitem{Bertsekas2011}
D.~P. Bertsekas.
\newblock Incremental proximal methods for large scale convex optimization.
\newblock {\em Mathematical programming}, 129:163--195, 2011.

\bibitem{BSS2017}
M.~Burger, A.~Sawatzky, and G.~Steidl.
\newblock First order algorithms in variational image processing.
\newblock In {\em Operator Splittings and Alternating Direction Methods}.
  Springer, 2017.

\bibitem{CWE2016}
S.~H. Chan, X.~Wang, and O.~A. Elgendy.
\newblock Plug-and-play {ADMM} for image restoration: Fixed-point convergence
  and applications.
\newblock {\em IEEE Transactions on Computational Imaging}, 3:84--98, 2016.

\bibitem{CPR2013}
E.~Chouzenoux, J.-C. Pesquet, and A.~Repetti.
\newblock Variable metric forward-backward algorithm for minimizing the sum of
  a differentiable function and a convex function.
\newblock {\em Journal of Optimization Theory and Applications}, 162:107--132,
  2014.

\bibitem{CB2016}
O.~Christensen.
\newblock {\em An Introduction to Frames and Riesz Bases}.
\newblock Springer, 2016.

\bibitem{Combettes2018}
P.~L. Combettes.
\newblock Monotone operator theory in convex optimization.
\newblock {\em Mathematical Programming}, 170(1):177--206, 2018.

\bibitem{CP2018}
P.~L. Combettes and J.-C. Pesquet.
\newblock Deep neural network structures solving variational inequalities.
\newblock {\em Set-Valued and Variational Analysis}, pages 1--28, 2020.

\bibitem{CW05}
P.~L. Combettes and V.~R. Wajs.
\newblock Signal recovery by proximal forward-backward splitting.
\newblock {\em Multiscale Modeling $\&$ Simulation}, 4:1168--1200, 2005.

\bibitem{CV98}
Z.~Cvetkovi\'c and M.~Vetterli.
\newblock Oversampled filter banks.
\newblock {\em IEEE Transactions on Signal Processing}, 46:1245--1255, 1998.

\bibitem{DDM04}
I.~Daubechies, M.~Defrise, and C.~De~Mol.
\newblock An iterative thresholding algorithm for linear inverse problems with
  a sparsity constraint.
\newblock {\em Communications on Pure and Applied Mathematics: A Journal Issued
  by the Courant Institute of Mathematical Sciences}, 57(11):1413--1457, 2004.

\bibitem{Dorobantu2016}
V.~Dorobantu, P.~A. Stromhaug, and J.~Renteria.
\newblock {DIZZYRNN}: Reparameterizing recurrent neural networks for
  norm-preserving backpropagation.
\newblock In {\em CoRR abs/1612.04035}, 2016.

\bibitem{EA2006}
M.~Elad and M.~Aharon.
\newblock Image denoising via sparse and redundant representations over learned
  dictionaries.
\newblock {\em IEEE Transactions on Image processing}, 15(12):3736--3745, 2006.

\bibitem{FMMC2017}
T.~Frerix, T.~M\"ollenhoff, M.~Moeller, and D.~Cremers.
\newblock Proximal backpropagation.
\newblock Technical report, ArXiv Preprint arXiv:1706.04638, 2017.

\bibitem{GP2019}
J.~A. Geppert and G.~Plonka.
\newblock Frame soft shrinkage operators are proximity operators.
\newblock Technical report, arXiv preprint arXiv:1910.01820, 2019.

\bibitem{GL2013}
G.~H. Golub and C.~F.~V. Loan.
\newblock {\em Matrix Computations}.
\newblock The Johns Hopkins University Press, 2013.

\bibitem{GSS2015}
J.~Goodfellow, J.~Shlens, and C.~Szegedy.
\newblock Expalining and harnessing adversarial examples.
\newblock In {\em International Conference on Learning Representations}, 2015.

\bibitem{GFPC18}
H.~Gouk, E.~Frank, B.~Pfahringer, and M.~Cree.
\newblock Regularisation of neural networks by enforcing {L}ipschitz
  continuity.
\newblock {\em arXiv preprint arXiv:1804.04368}, 2018.

\bibitem{GN2018}
R.~Gribonval and M.~Nikolova.
\newblock A characterization of proximity operators.
\newblock {\em arXiv preprint arXiv:1807.04014}, 2018.

\bibitem{HF2016}
M.~Harandi and B.~Fernando.
\newblock Generalized backpropagation, etude de cas: Orthogonality.
\newblock In {\em CoRR abs/1611.05927}, 2016.

\bibitem{huang2018orthogonal}
L.~Huang, X.~Liu, B.~Lang, A.~W. Yu, Y.~Wang, and B.~Li.
\newblock Orthogonal weight normalization: Solution to optimization over
  multiple dependent stiefel manifolds in deep neural networks.
\newblock In {\em Thirty-Second AAAI Conference on Artificial Intelligence},
  2018.

\bibitem{HCC2019}
T.~P. Huster, C.-Y.~J. Chiang, and R.~Chadha.
\newblock Limitations of the {L}ipschitz constant as a defense against
  adversarial examples.
\newblock In {\em ECML PKDD 2018 Workshops}, pages 16--29. Springer
  International Publishing, 2019.

\bibitem{jing2017tunable}
L.~Jing, Y.~Shen, T.~Dubcek, J.~Peurifoy, S.~Skirlo, Y.~LeCun, M.~Tegmark, and
  M.~Solja{\v{c}}i{\'c}.
\newblock Tunable efficient unitary neural networks ({EUNN}) and their
  application to {RNN}s.
\newblock In {\em Proceedings of the 34th International Conference on Machine
  Learning-Volume 70}, pages 1733--1741. JMLR. org, 2017.

\bibitem{KKHP2017}
E.~Kobler, T.~Klatzer, K.~Hammernik, and T.~Pock.
\newblock Variational networks: Connecting variational methods and deep
  learning.
\newblock In {\em German conference on pattern recognition}, pages 281--293.
  Springer, 2017.

\bibitem{LM2018}
G.~Lerman and T.~Maunu.
\newblock An overview of robust subspace recovery.
\newblock {\em Proceedings of the {IEEE}}, 106(8):1380--1410, 2018.

\bibitem{lezcano2019cheap}
M.~Lezcano-Casado and D.~Mart{\'\i}nez-Rubio.
\newblock Cheap orthogonal constraints in neural networks: A simple
  parametrization of the orthogonal and unitary group.
\newblock {\em arXiv preprint arXiv:1901.08428}, 2019.

\bibitem{mallat2008}
S.~Mallat.
\newblock {\em A wavelet tour of signal processing: the sparse way}.
\newblock Access Online via Elsevier, 2008.

\bibitem{MKKY2018}
T.~Miyato, T.~Kataoka, M.~Koyama, and Y.~Yoshida.
\newblock Spectral normalization for generative adversarial networks.
\newblock In {\em International Conference on Learning Representations}, 2018.

\bibitem{Moreau65}
J.-J. Moreau.
\newblock Proximit\'{e} et dualit\'{e} dans un espace {H}ilbertien.
\newblock {\em Bulletin de la Soci\'{e}t\'{e} Math\'{e}matique de France},
  93:273--299, 1965.

\bibitem{NNSS2019}
S.~Neumayer, M.~Nimmer, S.~Setzer, and G.~Steidl.
\newblock On the rotational invariant $l_1$-norm {PCA}.
\newblock {\em Linear Algebra and its Applications}, 587:243--270, 2019.

\bibitem{NA2005}
Y.~Nishimori and S.~Akaho.
\newblock Learning algorithms utilizing quasi-geodesic flows on the {S}tiefel
  manifold.
\newblock {\em Neurocomputing}, 67:106--135, 2005.

\bibitem{PS2006}
G.~Plonka and G.~Steidl.
\newblock A multiscale wavelet-inspired scheme for nonlinear diffusion.
\newblock {\em International Journal of Wavelets, Multiresolution and
  Information Processing}, 4(1):1--21, 2006.

\bibitem{Reich1979}
S.~Reich.
\newblock Weak convergence theorems for nonexpansive mappings in {B}anach
  spaces.
\newblock {\em Journal of Mathematical Analysis and Applications}, 67:274--276,
  1979.

\bibitem{SGL2019}
H.~Sedghi, V.~Gupta, and P.~M. Long.
\newblock The singular values of convolutional layers.
\newblock In {\em International Conference on Learning Representations}, 2019.

\bibitem{Se09}
S.~Setzer.
\newblock Operator splittings, {B}regman methods and frame shrinkage in image
  processing.
\newblock {\em International Journal of Computer Vision}, 92(3):265--280, 2011.

\bibitem{SKM2019}
H.~Sommerhoff, A.~Kolb, and M.~Moeller.
\newblock Energy dissipation with plug-and-play priors.
\newblock In {\em NeurIPS 2019 Workshop}, 2019.

\bibitem{SVW2016}
S.~Sreehariand, S.~V. Venkatakrishnan, and B.~Wohlberg.
\newblock Plug-and-play priors for bright field electron tomography and sparse
  interpolation.
\newblock {\em IEEE Transactions on Computational Imaging}, 2:408--423, 2016.

\bibitem{SWPMW2004}
G.~Steidl, J.~Weickert, T.~Brox, P.~Mr\'azek, and M.~Welk.
\newblock On the equivalence of soft wavelet shrinkage, total variation
  diffusion, total variation regularization, and sides.
\newblock {\em SIAM Journal on Numerical Analysis}, 42(2):686--713, 2004.

\bibitem{Sun2018AnOP}
Y.~Sun, B.~Wohlberg, and U.~Kamilov.
\newblock An online plug-and-play algorithm for regularized image
  reconstruction.
\newblock {\em IEEE Transactions on Computational Imaging}, 5:395--408, 2018.

\bibitem{TBF2018}
A.~M. Teodoro, J.~M. Bioucas-Dias, and M.~A. Figueiredo.
\newblock A convergent image fusion algorithm using scene-adapted
  {G}aussian-mixture-based denoising.
\newblock {\em IEEE Transactions on Image Processing}, 28(1):451--463, 2018.

\bibitem{TSS2018}
Y.~Tsuzuku, I.~Sato, and M.~Sugiyama.
\newblock Lipschitz-margin training: Scalable certification of perturbation
  invariance for deep neural networks.
\newblock In {\em Advances in Neural Information Processing Systems 31}, pages
  6541--6550. 2018.

\bibitem{Vorontsov2017}
E.~Vorontsov, C.~Trabelsi, S.~Kadoury, and C.~Pal.
\newblock On orthogonality and learning recurrent networks with long term
  dependencies.
\newblock In {\em Proceedings of the 34th International Conference on Machine
  Learning-Volume 70}, pages 3570--3578. JMLR. org, 2017.

\bibitem{WY2013}
Z.~Wen and W.~Yin.
\newblock A feasible method for optimization with orthogonality constraints.
\newblock {\em Mathematical Programming}, 142(1--2):397--434, 2013.

\bibitem{Wisdom2016}
S.~Wisdom, T.~Powers, J.~Hershey, J.~Le~Roux, and L.~Atlas.
\newblock Full-capacity unitary recurrent neural networks.
\newblock In {\em Advances in neural information processing systems}, pages
  4880--4888, 2016.

\end{thebibliography}
\end{document}